\def\mid{|}
\def\widetilde{\tilde}
\def\dmin{d_{\min}}
\def\EE{\mathbb{E}}
\def\PP{\mathbb{P}}
\def\NN{\mathbb{N}}
\def\RR{\mathbb{R}}
\def\ind{\mathbh{1}}
\def\cF{\mathcal{F}}
\def\cJ{\mathcal{J}}
\def\o{{\varnothing}}
\def\tod{\stackrel{d}{\rightarrow}}
\def\top{\stackrel{p}{\rightarrow}}
\def\tq{\tilde{q}}
\def\dist{\operatorname{dist}}
\def\diam{\operatorname{diam}}
\def\flood{\mathrm{flood}}
\def\tx{\mathrm{tx}}
\def\Bin{\operatorname{Bin}}
\def\td{\widetilde{d}}
\def\R{{\mathbb R}}
\def\N{{\mathbb N}}
\def\cF{{\mathcal F}}
\def\hd{\widehat{d}}
\def\hS{\widehat{S}}
\def\upi{\underline{\pi}}
\def\um{\underline{m}}
\def\uD{\underline{D}}
\def\uphi{\underline{\phi}}
\def\bpi{\overline{\pi}}
\def\bm{\overline{m}}
\def\bD{\overline{D}}
\def\bT{\overline{T}}
\def\bnu{\overline{\nu}}
\def\bT{\overline{T}}
\def\hD{\widehat{D}}
\def\hS{\widehat{S}}
\def\tq{\tilde{q}}
\def\tT{\widetilde{T}}
\def\hp{\hat{p}}
\def\Var{\operatorname{Var}}
\def\Bin{\operatorname{Bin}}
\def\cF{\mathcal{F}}
\def\Core{\operatorname{Core}}
\def\cF{\mathcal{F}}
\def\cE{\mathcal{E}}
\def\cC{\mathcal{C}}
\def\td{{\tilde{d}}}
\def\tG{{\tilde{G}}}
\def\tT{\widetilde{T}}
\def\tB{\widetilde{B}}
\def\td{\tilde{d}}
\def\deg{\operatorname{deg}}
\def\HH{\mathcal{H}}
\newtheorem{theorem}{Theorem}[section]
\newtheorem{lemma}[theorem]{Lemma}
\newtheorem{corollary}[theorem]{Corollary}
\newtheorem{proposition}[theorem]{Proposition}
\begin{document}
\begin{frontmatter}

\title{The diameter of weighted random graphs}
\runtitle{Weighted diameter}

\begin{aug}
\author[A]{\fnms{Hamed}~\snm{Amini}\corref{}\ead[label=e1]{Hamed.Amini@epfl.ch}}
\and
\author[B]{\fnms{Marc}~\snm{Lelarge}\ead[label=e2]{Marc.Lelarge@ens.fr}\thanksref{T1}}
\runauthor{H. Amini and M. Lelarge}
\thankstext{T1}{Supported by the French Agence Nationale de la
Recherche (ANR) under reference ANR-11-JS02-005-01 (GAP project).}
\affiliation{EPFL and INRIA}
\address[A]{\'Ecole Polytechnique F\'ed\'erale de Lausanne\\
Quartier UNIL-Dorigny, Extranef 249\\
1015 Lausanne\\
Switzerland\\
\printead{e1}}
\address[B]{DYOGENE, INRIA\\
\'Ecole Normale Superi\'eure\\
23 avenue d'Italie\\
75214 Paris Cedex 13\\
France\\
\printead{e2}}
\end{aug}

\received{\smonth{8} \syear{2012}}
\revised{\smonth{12} \syear{2013}}

%
\begin{abstract}
In this paper we study the impact of random exponential edge weights on
the distances in a random graph and, in particular, on its diameter.
Our main result consists
of a precise asymptotic expression for the maximal weight of the
shortest weight paths between all vertices (the weighted diameter) of
sparse random graphs, when the edge weights are i.i.d. exponential
random variables.
\end{abstract}

%
\begin{keyword}[class=AMS]
\kwd[Primary ]{60C05}
\kwd{05C80}
\kwd[; secondary ]{90B15}
\end{keyword}

\begin{keyword}
\kwd{First-passage percolation}
\kwd{weighted diameter}
\kwd{random graphs}
\end{keyword}
\end{frontmatter}

\section{Introduction and main results}\label{sec:fpp-intro}

Real-world networks are described not only by their graph structure,
which give us information about valid links between vertices in the
network, but also by their associated edge weights, representing cost
or time required to traverse the edge.
The analysis of the asymptotics of typical distances in edge weighted
graphs has received much interest by the statistical physics community
in the context of \emph{first-passage percolation
problems}. First-passage percolation (F.P.P.) describes the dynamics of
a fluid spreading within a random medium. In this paper we study the
impact of random exponential edge weights on the distances in a random
graph and, in particular, on its diameter.

The typical distance and diameter of nonweighted graphs have been
studied by many people, for various models of random graphs. A few
examples are the results of Bollob\'as and Fernandez de la Vega~\cite
{Boldlv}, van der Hofstad, Hooghiemstra and Van Mieghem~\cite{HHM05},
Fernholz and Ramachandran~\cite{fern07}, Chung and Lu~\cite{CL03},
Bollob\'as, Janson and Riordan~\cite{boljanrio} and Riordan and
Wormald~\cite{riowor10}.
The first-passage percolation model has been mainly studied on lattices
motivated by its subadditive property and its link to a number of other
stochastic processes; see, for example, \cite
{grimkest84,kesten86,hagpem98} for a more detailed discussion.
First-passage percolation with exponential weights has received
substantial attention (see, e.g., \cite{amperes,bhamidi08,HHM05,BHH10,BHH09,janson99,BHH10extreme}), in particular on
the complete graph and more recently, also on random graphs.

A weighted graph $(G,w)$ is the data of a graph $G=(V,E)$ and a collection
of weights $w=\{w_e\}_{e\in E}$ associated to each edge $e\in E$. We
suppose that all the edge weights are nonnegative.
For two vertices $a$ and $b\in V$, a path between $a$ and $b$ is a sequence
$\pi=(e_1,e_2,\ldots,e_k)$ where $e_i=\{v_{i-1},v_i\}\in E$ and $v_i\in V$
for $i\in\{1,\ldots,k\} = [1,k]$, with $v_0=a$ and $v_k=b$. We write
$e\in\pi$ if the edge $e\in E$ belongs to the path $\pi$, that is, if
$e=e_i$ for an $i \in[1,k]$.
For $a,b \in V$, the weighted distance between $a$ and $b$ is given by
\[
\dist_w(a,b) =\dist_w(a,b;G) = \min
_{\pi\in\Pi(a,b)}\sum_{e\in\pi} w_e,
\]
where the minimum is taken over all the paths between $a$ and $b$ in the
graph $G$. The \emph{weighted diameter} is then given by
\[
\diam_w(G) = \max\bigl\{\dist_w(a,b), a,b\in V,
\dist_w(a,b) < \infty\bigr\},
\]
and the \emph{weighted flooding time} for $a\in V$ is defined by
\[
\flood_w(a,G) = \max\bigl\{\dist_w(a,b), b\in V,
\dist _w(a,b) < \infty\bigr\}.
\]
%

\subsection{Random graphs with given degree sequence}
For $n\in\N$, let $(d_i)_1^n$ be a sequence of nonnegative integers
such that $\sum_{i=1}^n d_i$ is even. By means of the configuration
model (Bender and Canfield \cite{bencan78}, Bollob{\'a}s \cite{bol80}),
we define a random multigraph with given degree sequence $(d_i)_1^n$,
denoted by
$G^*(n,(d_i)_1^n)$ as follows: to each node $i\in[1,n]$ we associate
$d_i$ labeled half-edges. All half-edges need to be paired to construct
the graph; this is done by uniformly matching them. When a half-edge of
$i$ is paired with a half-edge of $j$, we interpret this as an edge
between $i$ and $j$. The graph $G^*(n,(d_i)_1^n)$ obtained following
this procedure may not be simple, that is, may contain self-loops due
to the pairing of two half-edges of $i$, and multi-edges due to the
existence of more than one pairing between two given nodes. Conditional
on the multigraph $G^*(n,(d_i)_1^n)$ being a simple graph, we obtain a
uniformly distributed random graph with the given degree sequence,
which we denote by $G(n, (d_i)_1^n)$, \cite{janson09b}.
We consider asymptotics as the numbers of vertices tend to infinity, and
thus we assume throughout the paper that we are given, for each $n$, a
sequence $\mathbf{d}^{(n)} = (d^{(n)}_i)_1^n = (d_i)_1^n$ of nonnegative
integers such that $\sum_{i=1}^n d^{(n)}_i$ is even. For notational
simplicity we will sometimes not show the dependency on $n$ explicitly.

For $k\in\N$, let $u_k^{(n)}=|\{i, d_i=k\}|$ be the number of vertices
of degree $k$.
From now on, we assume that the sequence $(d_i)_1^n$
satisfies the following regularity conditions analogous to the ones
introduced in \cite{MolReed98}: 

\begin{condition}\label{cond-dil}
For each $n$, $\mathbf{d}^{(n)} = (d^{(n)}_i)_1^n = (d_i)_1^n$ is a
sequence of positive
integers such that $\sum_{i=1}^n d_i$ is even, and for some probability
distribution $(p_r)_{r=1}^{\infty}$ over integers independent of $n$
and with finite mean $\mu:=\sum_{k\geq1}kp_k\in[1,\infty)$, the
following holds:
\begin{longlist}[(ii)]
\item[(i)] $u_k^{(n)}/n\to p_k$ for every $k\geq1$ as $n \to\infty$;
%
\item[(ii)] for some $\varepsilon>0$, $\sum_{i=1}^n d_i^{2+\varepsilon}=O(n)$.
\end{longlist}
\end{condition}

Note that the condition $d_i \geq1$ for all $i$ is not restrictive
since removing all isolated vertices from a graph will not affect the
(weighted) distances.


\subsection{Main results}
We define $q=\{q_k\}_{k=0}^{\infty}$ the size-biased probability mass
function corresponding to $p$ by
%
\begin{equation}
\label{eq:defq} \forall k\geq0\qquad q_k:= \frac{(k+1)p_{k+1}}{\mu},
\end{equation}
and let $\nu$ denote its mean
%
\begin{equation}
\label{eq:defnu}\nu:= \sum_{k=0}^{\infty}
kq_k\in(0,\infty)\qquad \bigl[\mbox{by Condition~\ref{cond-dil}(ii)}\bigr].
\end{equation}

Let $\phi_p(z)$ be the probability generating function of $\{p_k\}
_{k=0}^{\infty}\dvtx\phi_p(z) =\break   \sum_{k=0}^{\infty} p_k z^k$, and let
$\phi_q(z)$ be the probability generating function of $\{q_k\}
_{k=0}^{\infty}\dvtx\break \phi_q(z) = \sum_{k=0}^{\infty} q_k z^k=\phi
'_p(z)/\mu$.
In this paper, we will consider only the case where $\nu>1$. In
particular, there exists a unique $\lambda$ in $(0,1)$ such that
$\lambda= \phi_q(\lambda)$, and if $\mathcal{C}$ is the size
(in number of vertices) of the largest
component of $G(n, (d_i)_1^n)$, then we have by Molloy and
Reed~\cite{MolReed98} and Janson and Luczak~\cite{janson09b},
$\mathcal{C}/n \stackrel{p}{\rightarrow} 1- \phi_p(\lambda) > 0$.
In addition, we introduce
%
\begin{equation}
\lambda_* = \phi'_q(\lambda) = \sum
_{k=1}^{\infty} k q_k \lambda^{k-1}
\in[0,1).
\end{equation}
%

We can now state our main theorem.

\begin{theorem}\label{thm-fpp-main}
Let $(G(n, (d_i)_1^n),w)$ be a sequence of random weighted graphs where
$w=\{w_e\}_{e\in E}$ are i.i.d. rate one exponential random variables.

Assume {Condition~\ref{cond-dil}} and that $\nu$ defined in
(\ref{eq:defnu}) is such that $\nu>1$.

Assume that all the graphs have the same
minimum degree denoted by
$\dmin=\min_{i\in[1,n]}d_i$ and moreover that $p_{\dmin}>0$.
Let $\Gamma\dvtx \N^*\to\R$ be defined by
%
\begin{equation}
\label{def:Gamma}\Gamma(d):= d \ind[d \geq3]+ 2(1 - q_1) \ind[d = 2]
+ (1-\lambda_*) \ind[d = 1].
\end{equation}
Let a, b be two uniformly chosen vertices in this graph. If we
condition the vertices $a$ and $b$ to be connected, we have
%
\begin{equation}
\label{th:dist}\frac{\dist_w(a,b;G(n, (d_i)_1^n))}{\log n}
\stackrel{p} {\rightarrow}
\frac{1}{\nu-1}.
\end{equation}
If we condition the vertex $a$ to be in the largest component, we have
%
\begin{equation}
\label{th:flood}\frac{\flood_w(a,G(n, (d_i)_1^n))}{\log n}
\stackrel{p} {\rightarrow}
\frac{1}{\nu-1} +\frac{1}{\Gamma(\dmin)}.
\end{equation}
Finally, we have
%
\begin{equation}
\label{th:diam}\frac{\diam_w(G(n, (d_i)_1^n))}{\log n}
\stackrel {p} {\rightarrow}
\frac{1}{\nu-1} +\frac{2}{\Gamma(\dmin)}.
\end{equation}
\end{theorem}

\begin{remark}
Note that $\nu>1$ implies that $\sum_{k=0}^{\infty}k(k-2)p_k > 0$ so
that there is a
positive fraction of nodes in $G(n, (d_i)_1^n)$ with degree $3$ or
larger. In particular, we have $q_1=2p_2/\mu<1$ and $\lambda_*<1$ so
that we have $\Gamma(d)>0$ for all $d\in\N^*=\{1,2,\ldots\}$.
\end{remark}

We now comment our result with respect to related literature. Our main
contribution is (\ref{th:diam}) while results (\ref{th:dist}) and (\ref
{th:flood}) follow from the analysis required to prove (\ref{th:diam}).
Indeed, a much stronger version of (\ref{th:dist}) has been proved for
a slightly different model of random graphs by Bhamidi, van der Hofstad
and Hooghiemstra in \cite{BHH09}. Theorem~3.1 in \cite{BHH09} shows
that if the sequence $(d_i)_1^n$ is a sequence of i.i.d.
(nondegenerate) random variables with $d_{\min}\geq2$ and finite
variance, then there exists a random variable $V$ such that
(conditioning on $a$ and $b$ being connected)
\[
\dist_w\bigl(a,b;G^*\bigl(n, (d_i)_1^n
\bigr)\bigr) - \frac{\log n}{\nu-1} \stackrel {d} {\rightarrow} V.
\]
We expect this result to be valid for our model of random graphs
$G(n,(d_i)_1^n)$ where the degrees $d_i$ satisfy Condition~\ref
{cond-dil} (but we did not try to prove it). Bhamidi, van der Hofstad
and Hooghiemstra \cite{BHH09,BHH10extreme} also give results when the degree sequence has no finite
second moment and no finite first moment.

Motivated by the analysis of the diameter of the largest component of a
critical Erd\H{o}s--R\'enyi random graph (without edge weights), Ding
et al. \cite{ding09} show that if $d_i=r\geq3$ for all $i$, then we
have with high probability
\[
\diam_w\bigl(G^*(n,r)\bigr)= \biggl( \frac{1}{r-2}+
\frac{2}{r} \biggr)\log n+O(\log \log n).
\]

The intuition behind this formula is simple: consider a vertex in
$G^*(n,r)$; its closest neighbor is at distance given by
an exponential random variable with rate $r$ (i.e., the minimun of
$r$ exponential rate one random variables). Hence the probability
for this distance to be larger than $\log n /r$ is $n^{-1}$. Since
there are $n$ vertices with degree $r$, a simple
argument shows that we will find two nodes with closest neighbors at
distance $\log n /r$. The diameter will
be obtained by taking a shortest path between these two nodes. Each
such node will first give a contribution of $\log n /r$ to reach
its closest neighbor and then the path between these neighbors will be
typical, of the order $\log n /(r-2)$.
This simple heuristic argument shows that our result on the diameter
depends crucially on the weights being exponentially distributed or at
least have an exponential tail. We refer to \cite
{bhamidi2012universality} for recent results on distances with i.i.d.
weights. As we will see, the presence of nodes with degree one and two
makes the analysis much more involved than in \cite{ding09}. As soon as
a fraction of nodes have degree two, there will be long paths
constitued by a chain of such nodes, and we will see that these paths
contribute to the diameter.

In \cite{amdrle-fl}, this result is used to analyze an asynchronous
randomized broadcast algorithm for random regular graphs. In
continuous-time, each node is endowed with a Poisson point process with rate
1 and contacts one of its neighbors uniformly at random at each point
of his process. In a push model, if a node holds the message, it
passes the message to its randomly chosen neighbor regardless of its
state. The results in \cite{amdrle-fl} show that the asynchronous
version of the algorithm performs better than its synchronized version:
in the large size limit of the graph, it will reach the whole network
faster even if the local dynamics are similar on average. 

We end this section by a simple remark.
Our results can be applied to some other random graphs models too by
conditioning on the degree sequence. In particular, our results will
apply whenever the random graph conditioned on the degree sequence has
a uniform distribution over all possibilities. Notable examples of such
graphs are $G(n,p)$, the Bernoulli random graph with $n$ vertices and
edge probability $p$ and $G(n,m)$, the uniformly random graph with $n$
vertices and $m$ edges. For example, for $G(n,p)$ with $np\to\mu\in
(0,\infty)$ or $G(n,m)$ with $2m/n\to\mu$, Condition~\ref{cond-dil}(i)
holds in probability with $(p_k)$, a Poisson distribution with
parameter~$\mu$, $p_k = e^{-\mu}\frac{\mu^k}{k!}$.
In Appendix \ref{sec:Sko}, we show that thanks to Skorohod's coupling
theorem \cite{kallenberg}, Theorem~3.30, our results still apply in this
setting. By taking care of removing isolated nodes, our result gives in
this case [note that $\phi_q(z) = e^{-\mu(1-z)}$].

\begin{theorem}
Let $\mu>1$ be fixed, and let $\lambda_*<1$ satisfy $\lambda
_*e^{-\lambda_*} = \mu e^{-\mu}$. Assume $G_n=G(n,p)$ where $np\to\mu
\in(0,\infty)$ [or $G_n=G(n,m)$ with $2m/n\to\mu\in(0,\infty)$]
with i.i.d. rate $1$ exponential weights on its edges. Then we have
%
\begin{equation}
\frac{\diam_w(G_n)}{\log n} \stackrel{p} {\rightarrow} \frac{1}{\mu
-1}+
\frac{2}{1 - \lambda_*}.
\end{equation}
\end{theorem}

This result improves on a lower bound of the weighted
diameter given by Bhamidi, van der Hofstad and Hooghiemstra in
\cite{BHH10}, Theorem~2.6.
Note that \cite{BHH10} also deals with the case $np\to\infty$ which is
out of the scope of the present paper.

\subsection{Overview of the proof and organization of the paper}

Our work is a direct generalization of \cite{ding09} with
significantly more involved calculations.
The first key idea of the proof from \cite{ding09} is to grow balls
centered at all vertices
of the graph simultaneously. The time when two balls centered at $a$
and $b$,
respectively, intersect is exactly the half of the weighted distance between
$a$ and $b$. (In what follows, we will sometimes deliberately use the term
\textit{time} instead of the term \textit{weighted distance}.) Hence
the weighted diameter
becomes twice the time when the last two balls intersect. A simple
argument shows that any two balls
containing slightly more than $\sqrt{n}$ vertices ($2\sqrt{rn\log n}$
vertices for $r$-regular case) will intersect with high probability;
see Proposition~\ref{prop-up}.
Hence it will be enough to control the time at which all balls have
reached this critical size of order $\sqrt{n}$ in order to prove an
upper bound for the weighted diameter.
For a proof of the upper bound on the diameter, we apply an union
bound argument as in \cite{ding09}. Hence, we need to
find the right time such that the probability for a (typical) ball to
reach size $\sqrt{n}$ is of order~$n^{-1}$.
In order to do so, we use the second main idea of the proof: we couple
the exploration process on the weighted graph with a continuous time
Markov branching process.
This coupling argument is quite standard, and we will deal here with
the same branching process
approximation for the exploration process on the graph as in
\cite{BHH09}. However, we are facing here new difficulties as we need
to consider events here of small probability for this exploration
process (of order $n^{-1}$). In particular, we need to show that the
coupling is still valid for such large deviations. When $\dmin\geq3$,
the argument of Ding et al. \cite{ding09} can be extended easily \cite
{amdrle-fl}. But as soon as
$\dmin\leq2$, several complications happen. First as shown in
\cite{amlel-12}, the asymptotics for the large deviations of the
branching process depend on the minimal possible offspring. Second, as
soon as $\dmin=1$, the small components of the graph contain now a
positive fraction of the nodes. We need to bound the diameter of these
small components and to study the diameter on the largest component,
we need to condition our exploration process on ``nonextinction.''
Similarly, the presence of degree one nodes significantly complicates
the proof of the lower bound. In order to apply the second moment
method as in \cite{ding09}, we need to first remove vertices with
degree one iteratively to work with the $2$-core of the graph (indeed
an augmented version of this $2$-core; see Section~\ref{sec:low1c} for
details).


We consider in Section~\ref{sec:fpp-proof} the exploration process for
configuration model which consists in growing balls simultaneously from
each vertex. 
A precise treatment of the exploration process, resulting in
information about the growth rates of the balls, is given in this
section. In addition, the section provides some necessary notation and
definitions that will be used throughout the last three sections.
Sections~\ref{sec:fpp-upper} and~\ref{sec:fpp-lower} form the heart of
the proof. We first prove that the above bound is an upper bound for
the weighted diameter. This will consist of defining the two parameters
$\alpha_n$ and $\beta_n$ with the following significance: (i) Two
balls of size at least $\beta_n$ intersect almost surely; (ii)
considering the growing balls centered at a vertex in the graph, the
time it takes for the balls to go from size $\alpha_n$ to size $\beta
_n$ have all the same asymptotic for all the vertices of the graph, and
the asymptotic is half of the typical weighted distance in the graph;
(iii) the time it takes for the growing balls centered at a given
vertex to reach size at least $\alpha_n$ is upper bounded by $\frac
{1+\varepsilon}{\Gamma(\dmin)}\log n$ for all $\varepsilon>0$ with high
probability (w.h.p.). This will show that the diameter is w.h.p.
bounded above by $(1+\varepsilon)(\frac{1}{\nu-1}+\frac{2} {\Gamma(\dmin
)})\log n$,\vspace*{1.5pt} for all $\varepsilon>0$. The last section provides the
corresponding lower bound. To obtain the lower bound, we show that
w.h.p. (iv)~there are at least two nodes with degree $\dmin$ such
that the time it takes for the balls centered at these vertices to
achieve size at least $\alpha_n$ is worse than the other vertices, and
is lower bounded by $\frac{1-\varepsilon}{\Gamma(\dmin)} \log n$, for all
$\varepsilon>0$. And using this, we conclude that the diameter is w.h.p.
bounded below by $(1-\varepsilon)(\frac{1}{\nu-1} + \frac{2}{\Gamma(\dmin
)})\log n$, for all fixed $\varepsilon>0$, finishing\vspace*{1.5pt} the proof of our main theorem.

The actual values of $\alpha_n$ and $\beta_n$ will be
%
\begin{equation}
\label{eq:defab}\alpha_n:= \bigl\lfloor\log^3 n\bigr
\rfloor\quad\mbox{and} \quad\beta _n:= \biggl\lfloor3\sqrt{
\frac{\mu}{\nu-1} n \log n}\biggr\rfloor.
\end{equation}

When $\dmin=1$, the longest shortest path in a random graph will be
between a pair of vertices $a$ and $b$ of degree one. Furthermore, this
path consists of a path from $a$ to the $2$-core, a path through the
$2$-core and a path from the $2$-core to $b$. For this, we need to
provide some preliminary results on the structure of the 2-core; this
is done in Appendix \ref{sec:fpp-2core}.
In Appendix \ref{sec:Sko}, we show that our results still apply for
random graphs $G(n,p)$ and $G(n,m)$ by conditioning on the degree sequence.

\subsubsection*{Basic notation}
We usually do not make explicit reference to the probability space
since it is usually clear to which one we are referring. We say that an
event $A$ holds almost surely, and we write a.s., if $\PP(A) = 1$. The
indicator function of an event $A$ is of particular interest, and it is
denoted by $\ind[A]$.
We consider the asymptotic case when $n \rightarrow\infty$, and say
that an event holds w.h.p. (with
high probability) if it holds with probability tending to 1 as $n
\rightarrow\infty$. We denote by $\tod$ and $\top$ convergence in
distribution, and in probability, respectively. Similarly, we use $o_p$
and $O_p$ in a standard way. For example, if $(X_n)$ is a sequence of
random variables, then $X_n = O_p(1)$ means that ``$X_n$ is bounded in
probability,'' and $X_n=o_p(n)$ means that $X_n/n \stackrel
{p}{\rightarrow} 0$.

\section{First passage percolation in $G^*(n, (d_i)_1^n)$}\label{sec:fpp-proof}

We start this section by introducing some new notation and
definitions. Before this, one remark is in order. In what follows, we
will sometimes deliberately use the term \textit{time} instead of the
term \textit{weighted distance}. It will be clear from the context what
we actually mean by this.

Let $ (G=(V,E),w )$ be a weighted graph. For a vertex $a\in V$
and a real number $t>0$, the $t$-radius neighborhood of $a$ in the
(weighted) graph, or the ball of radius~$t$ centered at $a$, is defined as
\[
B_w(a,t):= \bigl\{ b, \dist_w(a,b)\leq t \bigr\}.
\]
The first time $t$ where the ball $B_w(a,t)$ reaches size $k+1$ will
be denoted by $T_a(k)$ for $k\geq0$, that is,
\[
T_a(k) = \min \bigl\{ t\dvtx \bigl|B_w(a,t)\bigr| \geq k+1
\bigr\},\qquad T_a(0) = 0.
\]
If there is no such $t$, that is, if the component containing $a$ has
size at most $k$, we define $T_a(k) = \infty$. More precisely, we use
$I_a$ to denote the size of the component containing $a$ in the graph
minus one. In other words,
\[
I_a:= \max \bigl\{\bigl |B_w(a,t)\bigr|, t\geq0 \bigr\}-1,
\]
so that for all $k>I_a$, we set $T_a(k)=\infty$.
Note that there is a vertex in $B_w(a,T_a(k))$ which is not in any ball
of smaller radius around $a$. When the weights are i.i.d. according to
a random variable with continuous density, this vertex is, in addition,
unique with probability one. We will assume this in what follows.
For an integer $i\leq I_a$, we use $\hd_a(i)$ to denote the
forward-degree of the (unique) node added at time $T_a(i)$ in
$B_w(a,T_a(i))$. Recall that the forward-degree is the degree minus one.
Define $\hS_a(i)$ as follows:
%
\begin{equation}
\label{eq:defhS}\hS_a(i):= d_a +
\hd_a(1) + \cdots + \hd_a(i) - i,\qquad \widehat
S_a(0) =d_a.
\end{equation}
For a connected graph $H$, the tree excess of
$H$ is denoted by $\tx(H)$, which is the maximum number of edges that
can be deleted from $H$ while still keeping it connected. By an abuse
of notation, for a subset $W \subseteq V$, we denote by $\tx(W)$ the
tree excess of the induced subgraph $G[W]$ of $G$ on $W$. (If $G[W]$ is
not connected, then $\tx(W):=\infty$.)
Consider the growing balls $B_w(a,T_a(i))$ for $0\leq i\leq I_a$
centered at~$a$, and let $X_a(i)$ be the tree excess of $B_w(a,T_a(i))$,
\[
X_a(i): = \tx\bigl( B_w\bigl(a,T_a(i)
\bigr) \bigr).
\]
We extend the definition of $X_a$ to all the integer values by setting
$X_a(i) = X_a(I_a)$ for all $i > I_a$.

The number of edges crossing the boundary of the ball $B_w(a,T_a(i))$
is denoted by $S_a(i)$. A simple calculation shows that
%
\begin{equation}
\label{eq:S(a)}S_a(i) = \hS_a(i) - 2
X_a(i).
\end{equation}
%

We now consider a random graph $G(n, (d_i)_1^n)$ with i.i.d. rate one
exponential weights on its edges, such that the degree sequence
$(d_i)_1^n$ satisfies Condition~\ref{cond-dil}. We let $m^{(n)}$ be the
total degree defined by
$m^{(n)}=\sum_{i=1}^n d_i = \sum_{k\geq0} ku_k^{(n)}$.\vspace*{2pt}

One particularly useful property of the configuration model is that it
allows one to construct the graph gradually, exposing the edges of the
perfect matching, one at a time. This way, each additional edge is
uniformly distributed among all possible edges on the remaining
(unmatched) half-edges. We have the following useful lemma.

\begin{lemma}\label{lem:upX}
For any $k\leq\frac{m^{(n)}-n}{2}$, we have
\[
\PP \bigl(2X_a(k)\geq x\mid\hS_a(k),
I_a\geq k \bigr)\leq\PP \bigl( \Bin \bigl(\hS_a(k),
\sqrt{\hS_a(k)/n} \bigr)\geq x \mid\hS_a(k) \bigr).
\]
\end{lemma}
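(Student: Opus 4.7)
The proof interprets $2 X_a(k)$ as a count of ``internal'' pairings among the half-edges attached to the ball $W := B_w(a, T_a(k))$ and dominates this by a binomial via sequential exposure. Since $W$ is connected with $|W|=k+1$ and $|E(G[W])| = k + X_a(k)$, decomposing the degree sum $D_W := \sum_{v\in W} d_v$ into internal and boundary endpoints gives $D_W = 2(k + X_a(k)) + S_a(k)$, which combined with the defining identity $\hS := \hS_a(k) = D_W - 2k$ recovers (\ref{eq:S(a)}) and shows that $2 X_a(k)$ equals the number of $W$-half-edges whose configuration-model partner also lies in $W$.

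I would then condition on all data revealed by the FPP exploration up to time $T_a(k)$, which exposes $W$, its degree sequence (hence $\hS$), and the $2k$ half-edges consumed by the $k$ tree edges together with any collision pairings discovered in the process. By the standard deferred-decisions principle for the configuration model, the residual pairing on the $m^{(n)} - 2k$ still-unmatched half-edges is uniform. Expose the partners of the $\hS$ residual $W$-half-edges in a fixed order and let $\xi_j \in \{0,1\}$ indicate that the $j$-th partner lies in $W$; writing $\cF_{j-1}$ for the $\sigma$-field of information before step $j$ and $M_j \leq \hS$, $T_j \geq m^{(n)} - 2k - 2\hS \geq n - 2\hS$ for the numbers of $W$- and total unmatched half-edges, either $\xi_j$ is $\cF_{j-1}$-measurable or
$$
\PP(\xi_j = 1 \mid \cF_{j-1}) \;=\; \frac{M_j - 1}{T_j - 1} \;\leq\; \frac{\hS - 1}{n - 2\hS} \;\leq\; \sqrt{\hS/n}.
$$
The last inequality, obtained by squaring, reduces to $\hS \cdot n \leq (n - 2\hS)^2$, which is elementary in the regime $\hS \leq c n$ for a suitable constant $c$; the complementary range $\hS = \Theta(n)$ does not arise in the applications of this lemma (where $\hS$ is at most of order $\beta_n = O(\sqrt{n \log n})$), and in any event is dispatched by the deterministic bound $2 X_a(k) \leq \hS$.

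A classical coupling (a sequence of Bernoulli trials with conditionally bounded success probability is stochastically dominated by i.i.d.\ Bernoullis) then yields independent $\Ber(\sqrt{\hS/n})$ variables $B_1, \ldots, B_{\hS}$ with $\xi_j \leq B_j$ almost surely, so $2 X_a(k) = \sum_{j=1}^{\hS} \xi_j \leq \sum_{j=1}^{\hS} B_j \sim \Bin(\hS, \sqrt{\hS/n})$; taking tail probabilities conditionally on $\hS_a(k)$ concludes. The delicate point is the exchangeability claim in the second step: the FPP exploration reveals pairings in a data-dependent order and may itself include collision pairings, so one must bookkeep carefully to ensure that after this conditioning the $\hS$ residual $W$-half-edges are exactly those that still require matching and their partners are uniform on the remaining unmatched half-edges. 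Once this is set up correctly, the sequential-binomial coupling is routine.
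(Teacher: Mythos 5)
Your reduction has the right skeleton (residual half-edges of the ball, deferred decisions, sequential exposure), but the coupling step at the end is where the lemma actually lives, and as written it fails. When you expose the $\hS_a(k)$ residual $W$-half-edges in order, the steps at which $\xi_j$ is $\cF_{j-1}$-measurable are not harmless: they are exactly the second half-edges of internal pairs (and, under your literal conditioning, the collision pairings already revealed by the exploration), and at those steps $\PP(\xi_j=1\mid\cF_{j-1})=1$. The ``classical'' domination of a sequence of Bernoulli trials by i.i.d.\ $\Ber(p)$ variables requires the conditional success probability to be at most $p$ at \emph{every} step, so you cannot couple $\xi_j\le B_j$ with $B_j\sim\Ber(\sqrt{\hS_a(k)/n})$ for those deterministic steps, and the conclusion $2X_a(k)=\sum_j\xi_j\le_{st}\Bin(\hS_a(k),\sqrt{\hS_a(k)/n})$ does not follow. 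This is precisely the double-counting that the square root is there to absorb: each internal hit, which occurs with conditional probability of order $\hS_a(k)/n$, contributes $2$ to the count, and one needs a domination of the form $2\,\Ber(p^2)\le_{st}\Bin(2,p)$ applied pair by pair. That is the content of the matching lemma of Fernholz and Ramachandran (Lemma~\ref{lem:fpp-excess} in the appendix), which the paper invokes: it conditions only on the forward-degree sequence $\hd_a(1),\dots,\hd_a(k)$ (hence $\hS_a(k)$), observes that the configuration model is then a uniform matching on the $m^{(n)}-2k\ge n$ remaining half-edges, and applies $|X\cap F(X)|\le_{st}\Bin(|X|,\sqrt{|X|/(m^{(n)}-2k)})$ with $X$ the set of residual ball half-edges. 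Your use of $\sqrt{\hS_a(k)/n}$ merely as slack in the one-step bound $(\hS_a(k)-1)/(n-2\hS_a(k))\le\sqrt{\hS_a(k)/n}$ misallocates it and leaves the factor-two problem unaddressed; you even flag the conditioning bookkeeping as the delicate point, while the real obstruction is this one.

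A related problem is the conditioning itself: if you condition on \emph{all} data revealed by the FPP exploration up to $T_a(k)$, including the collision pairings, then $X_a(k)$ is already $\cF_{T_a(k)}$-measurable (every edge of $G[B_w(a,T_a(k))]$ is revealed by time $T_a(k)$ in this construction), so the conditional inequality you would then be proving is false on the histories where $2X_a(k)\ge x$. The conditioning must be restricted to the degree data (and the tree structure), keeping the residual matching uniform, as in the paper. Finally, two smaller points: dismissing the regime $\hS_a(k)=\Theta(n)$ because ``it does not arise in applications'' does not prove the lemma as stated, and the deterministic bound $2X_a(k)\le\hS_a(k)$ only settles the tail at $x>\hS_a(k)$, not below it; the paper instead gets the general statement from $m^{(n)}-2k\ge n$ combined with monotonicity of the binomial in its success parameter.
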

%

\begin{pf}
To prove this, we need the following intermediate result
proved in~\cite{fern07}, Lemma~3.2. 

\begin{lemma}\label{lem:fpp-excess}
Let $A$ be a set of $m$ points, that is, $|A|=m$, and let $F$ be a
uniform random matching of elements of $A$. For $e \in A$, we denote by
$F(e)$ the point matched to $e$, and similarly for $X \subset A$, we
write $F(X)$ for the set of points matched to $X$. Now let $X \subset
A$, $k=|X|$, and assume $k \leq m/2$. We have
\[
\bigl|X \cap F(X)\bigr| \leq_{\mathrm{st}} \Bin(k,\sqrt{k/m}).
\]
\end{lemma}

Conditioning on all the possible degree sequences $\hd_a(1), \hd
_a(2),\ldots,\hd_a(k)$, with the property that
$d_a+\sum_{1\leq i\leq k} \hd_a(i) = \hS_a(k)$, the configuration model
becomes equivalent to the following process: start from $a$, and at
each step $1\leq i \leq k$, choose a vertex $a_i$ of degree $\hd
_a(i)+1$ uniformly at random from all the possible vertices of this
degree outside the set $\{a,a_1,\ldots,a_{i-1}\}$, choose a half-edge
adjacent to $a_i$ uniformly at random and match it with a uniformly
chosen half-edge from the yet-unmatched half-edges adjacent to one of
the nodes $a,a_1,\ldots,a_{i-1}$. And at the end, after $a_k$ has been
chosen, take a uniform matching for all the remaining $(m^{(n)}-2k)$
half-edges. Now the proof follows from Lemma~\ref{lem:fpp-excess} by
the simple observation that, since $m^{(n)} -2k \geq n$,
\begin{eqnarray*}
&&\PP \bigl( \Bin \bigl(\hS_a(k),\sqrt{\hS_a(k)/m^{(n)}-2k}
\bigr)\geq x \mid\hS _a(k) \bigr)
\\
&&\qquad\leq \PP \bigl( \Bin \bigl(\hS_a(k),\sqrt{\hS_a(k)/n}
\bigr)\geq x \mid\hS_a(k) \bigr).
\end{eqnarray*}
\upqed\end{pf}

In the sequel, we will also need to consider the number of vertices of
forward-degree at least two in the (growing) balls centered at a vertex
$a\in V$. Thus, for $i\leq
I_a$, define
%
\begin{equation}
\label{eq:defg}\gamma_a(i):= \sum_{\ell=1}^i
\ind\bigl[\hd_a(\ell)\geq2\bigr] =\bigl | \bigl\{b \in B_w
\bigl(a,T_a(i)\bigr)\dvtx b\neq a \mbox{ and } d_b
\geq3 \bigr\}\bigr|,
\end{equation}
and extend the definition to all integers by setting
$\gamma_a(i) = \gamma_a(I_a)$ for all $i> I_a$. Note that $\gamma_a(0)
=0$ and $\gamma_a(i)=i$ if $\dmin\geq3$.

Now define $\bT_a(k)$ to be the first time where the ball centered at
$a$ has at least $k$ nodes of forward-degree at least two. More precisely,
%
\begin{equation}
\label{eq:defbT}\bT_a(i):=\min \bigl\{T_a(\ell),
\mbox{for } \ell \mbox{ such that } \gamma_a(\ell)\geq k \bigr\}.
\end{equation}

The main idea of the proof of Theorem~\ref{thm-fpp-main} consists of
growing the balls around each vertex of the graph simultaneously so
that the diameter becomes equal to twice the time when the last two
balls intersect. In what follows, instead of taking a graph at
random and then analyzing the balls, we use a standard
coupling argument in random graph theory which allows us to build the
balls and the graph at the same time. We present this coupling in the
next coming section.

\subsection{The exploration process}\label{sec:explor}

Fix a vertex $a$ in $G^*(n,(d_i)_1^n)$, and consider the following
continuous-time exploration
process. At time $t=0$, we have a neighborhood consisting only of $a$, and
for $t>0$, the neighborhood is precisely $B_w(a,t)$. We now give an
equivalent description of this process. This provides a more
convenient way for analyzing the random variables which are crucial in
our argument, for example, $S_a(k)$. The idea is that instead of taking a
graph at random and then analyzing the balls, the graph and the balls
are built at the same time. We will consider a growing set of vertices
denoted by $B$ and a list $L$ of yet unmatched half-edges in $B$.
Recall that in the usual way of constructing a random graph with given
degree sequence, we match half-edges amongst themselves uniformly at
random. In the following, by a \textit{matching}, we mean a pair of
matched half-edges.

\begin{itemize}
\item Start with $B = \{a\}$, where $a$ has $d_a$ half-edges. For each
half edge, decide (at random depending on the previous choices) if the
half-edge is matched to a half-edge adjacent to $a$ or not. Reveal the
matchings consisting of those half-edges adjacent to $a$ which are
connected amongst themselves (creating self-loops at~$a$) and assign
weights independently at random to these edges. The remaining unmatched
half-edges adjacent to $a$ are stored in a list $L$. (See the next step
including a more precise description of this first step.)

\item Repeat the following exploration step as long as the list $L$ is
not empty.
\item Given there are $\ell\geq1$ half-edges in the current list, say
$L=(h_1,\ldots, h_\ell)$, let $\Psi\sim\operatorname{Exp}(\ell)$ be an
exponential variable with mean $\ell^{-1}$. After time $\Psi$ select
a half-edge from $L$ uniformly at random, say $h_i$. Remove $h_i$
from $L$ and match it to a
uniformly chosen half-edge in the entire graph excluding $L$, say $h$. Add
the new vertex (connected to $h$) to $B$ and reveal the
matchings (and weights) of any of its half-edges whose matched
half-edge is
also in $B$. More precisely, let $d$ be the degree of this new
vertex and $2x$ the number of already matched half-edges in $B$ (including
the matched half-edges $h_i$ and $h$). There is a
total of $m-2x$ unmatched half-edges, $m$ being the total number of
half-edges of the random graph $G$. Consider one of the
$d-1$ half-edges of the new vertex (excluding $h$ which is connected
to $h_i$); with probability $(\ell-1)/(m-2x-1)$ it is matched with a
half-edge in $L$, and with the complementary
probability it is matched with an unmatched half-edge outside
$L$. In the first case, match it to a uniformly chosen half-edge of
$L$, and remove the corresponding half-edge from $L$. In the second
case, add it to $L$. We proceed in the similar manner for all the
$d-1$ half-edges of the new vertex.
\end{itemize}

Let $B(a,t)$ and $L(a,t)$ be, respectively, the set of vertices and the
list generated by the above procedure at time $t$, where $a$ is the
initial vertex.
Considering the usual configuration model, and using the memoryless
property of the exponential distribution, we have
$B_w(a,t)=B(a,t)$ for all $t$. To see this, we can continuously grow
the weights of the half-edges $h_1, \ldots, h_{\ell}$ in $L$ until one
of their rate $1$ exponential clocks fire. Since the minimum of $\ell$
i.i.d. exponential variables with rate 1 is exponential with rate
$\ell$, this is the same as choosing uniformly a half-edge $h_i$ after
time $\Psi$ (recall that by our conditioning, these $\ell$ half-edges
do not pair within themselves). Note that the final weight of an edge
is accumulated between the time of arrival of its first half-edge and
the time of its pairing (except edges going back into $B$ whose
weights are revealed immediately). Then the equivalence follows from
the memoryless property of the exponential distribution.

Note that $T_a(i)$ is the time of the $i$th exploration step in the
above continuous-time exploration process.
Assuming $L(a,T_a(i))$ is not empty, at time $T_a(i+1)$, we match a uniformly
chosen half-edge from the set $L(a,T_a(i))$ to a uniformly chosen
half-edge among all other half-edges, excluding those in
$L(a,T_a(i))$. Let $\cF_{t}$ be the $\sigma$-field generated by the
above process until time $t$.
Given $\cF_{T_a(i)}$, $T_a(i+1)-T_a(i)$ is an
exponential random variable with rate $S_a(i)$, given by equation (\ref
{eq:S(a)}), which is equal to $|L(a,T_a(i))|$, the size of the list
consisting of unmatched half-edges in $B(a,T_a(i))$. In other words,
\[
\bigl( T_a(i+1)-T_a(i) | \cF_{T_a(i)} \bigr)
\stackrel{d} {=} \operatorname {Exp}\bigl(S_a(i)\bigr),
\]
this is true since the minimum of $k$ i.i.d. rate one exponential
random variables is an exponential of rate k.

Recall that $I_a=\min\{i, S_a(i)=0\} \leq n-1$, and set $S_a(i)=0$ for
all $I_a\leq i\leq n-1$. We now extend the definition of the sequence
$\hd(i)$ to all the values of $i\leq n-1$, constructing a sequence $(\hd
_a(i))_{i=1}^{n-1}$ which will coincide in the range $i\leq I_a$ with
the sequence $\hd_a(i)$ defined in the previous subsection. We first
note that in the terminology of the exploration process, the sequence
$(\hd_a(i))_{i\leq I_a}$ can be constructed as follows. At time
$T_a(i+1)$, the half-edge adjacent to the $i+1$th vertex is chosen
uniformly at random from the set of all the half-edges adjacent to a
vertex out-side $B$, and $\hd(i+1)$ is the forward-degree of the vertex
adjacent to this half-edge. Thus the sequence $(\hd(i))_{i\leq I_a}$
has the following description.

Initially, associate to all vertices $j$ a set of $d_j$ half-edges
(corresponding the set of half-edges outside $B$ and $L$). At step $0$,
remove the half-edges corresponding to vertex $a$. Subsequently, at
step $k \leq I_a$, choose a half-edge uniformly at
random among all the remaining half-edges; if the half-edge is drawn
from the node $j$'s half-edges,
then set $\hd_a(k) = d_{j} - 1 $, and remove the node $j$ and all of
its half-edges. Obviously, this description allows us to extend the
definition of $\hd_a(i)$ to all the values of $I_a<i \leq n-1$. Indeed,
if $I_a<n-1$, there are still half-edges at step $I_a+1$, and we can
complete the sequence $\hd_a(i)$ for $i\in[I_a+1,n-1]$ by continuing
the sampling described above. In this way,\vspace*{1pt} we obtain a sequence
$(\hd_a(i))_{i=1}^{n-1}$ which coincides with the sequence defined in
the previous section for
$i\leq I_a$.


We also extend the sequence $\hS_a(i)$ for $i> I_a$
thanks to (\ref{eq:defhS}). Recall that we set $X_a(i)=X_{a}(I_a)$ for all
$i>I_a$. It is simple to see that with these conventions, relation
(\ref{eq:S(a)}) is not anymore valid for $i>I_a$, but we still have
$S_a(i)\leq
\hS_a(i)-2X_a(i)$ for all $i$.

The process $i\mapsto X_a(i)$ is nondecreasing in
$i\in[1,n-1]$. Moreover, given $\cF_{T_a(i)}$, the increment
$X_{a}(i+1)-X_a(i)$ is
stochastically dominated by the following binomial random variable:
%
\begin{equation}
\label{upinc1}X_{a}(i+1)-X_a(i) \leq_{\mathrm{st}}
\Bin \biggl( \hd_a(i+1), \frac
{(S_a(i)-1)^+}{m^{(n)}-2(X_a(i)+i)} \biggr),
\end{equation}
where $m^{(n)}=\sum_{i=1}^n d_i $.
We recall here that for two real-valued random variables $A$ and $B$,
we say $A$ is stochastically dominated by $B$ and write $A\leq_{\mathrm{st}} B$
if for all $x$, we have $\PP(A\geq x)\leq\PP(B\geq x)$. If $C$ is
another random variable, we write $A\leq_{\mathrm{st}} (B | C)$ if for all $x$,
$\PP(A\geq x)\leq\PP(B\ge x | C)$.

Note that if $i>I_a$, then $S_a(i)=0$ and $X_{a}(i+1)-X_a(i)=0$, so
that (\ref{upinc1}) is still valid.

For $i< \frac{n}2$, we have
\begin{eqnarray*}
\frac{(S_a(i)-1)^+}{m^{(n)}-2(X_a(i)+i)}&\leq& \frac{\hS
_a(i)-2X_a(i)}{m^{(n)}-2(X_a(i)+i)}
\\
&\leq&\frac{\hS_a(i)}{m^{(n)}-2i} \leq\frac{\max_{\ell\leq i}\hS_a(\ell
)}{n-2i}.
\end{eqnarray*}

We conclude:

\begin{lemma}\label{lem-upinc11}
For $i< \frac{n}2$, we have
%
\begin{equation}
\label{upinc11}X_a(i)\leq_{\mathrm{st}}\Bin \biggl(\max
_{\ell\leq i}\hS_a(\ell)+i, \frac{\max_{\ell\leq i}\hS_a(\ell)}{n-2i} \biggr).
\end{equation}
\end{lemma}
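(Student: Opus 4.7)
The idea is to iterate the one-step stochastic domination~(\ref{upinc1}) over the first $i$ exploration steps while replacing every state-dependent Bernoulli probability by a single deterministic upper bound indexed by the random quantity $M:=\max_{\ell\leq i}\hS_a(\ell)$. The crucial a priori input is the identity~(\ref{eq:S(a)}): the non-negativity of $S_a(\ell)=\hS_a(\ell)-2X_a(\ell)$ gives the deterministic bound
\begin{eqnarray*}
X_a(\ell)\;\leq\; \hS_a(\ell)/2\;\leq\; M/2,\qquad \ell\leq i,
\end{eqnarray*}
and this, together with $m^{(n)}\geq n$ (since $d_j\geq 1$ for every $j$), forces the denominator in~(\ref{upinc1}) to satisfy $m^{(n)}-2(X_a(\ell)+\ell)\geq n-M-2i$, which is $\geq n-2i$ in the regime where~(\ref{upinc11}) is non-trivial. (If $M/(n-2i)\geq 1$, or $n-2i\leq 0$, the RHS of~(\ref{upinc11}) is a.s.\ at least $M+i\geq M/2\geq X_a(i)$, so the inequality is vacuous.) The numerator is handled by the same device: $(S_a(\ell)-1)^+\leq S_a(\ell)\leq \hS_a(\ell)\leq M$.

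Inserting these uniform bounds into~(\ref{upinc1}) yields, conditionally on $\cF_{T_a(\ell)}$,
\begin{eqnarray*}
X_a(\ell+1)-X_a(\ell)\;\leq_{st}\;\Bin\!\bigl(\hd_a(\ell+1),\, M/(n-2i)\bigr).
\end{eqnarray*}
I couple by introducing i.i.d.\ uniform $[0,1]$ variables $(U_{\ell,j})$ independent of the exploration process: each of the $\hd_a(\ell+1)$ Bernoulli trials underlying the conditional increment is realized as $\ind[U_{\ell+1,j}\leq p_\ell]$ with $p_\ell\leq M/(n-2i)$, hence pointwise dominated by $\ind[U_{\ell+1,j}\leq M/(n-2i)]$. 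Summing across the $i$ steps,
\begin{eqnarray*}
X_a(i)\;\leq\;\sum_{\ell=1}^{i}\sum_{j=1}^{\hd_a(\ell)}\ind\!\bigl[U_{\ell,j}\leq M/(n-2i)\bigr],
\end{eqnarray*}
and, conditionally on $(\hd_a(\ell))_{\ell\leq i}$ and on $M$, the right-hand side is distributed as $\Bin\!\bigl(\sum_\ell\hd_a(\ell),\,M/(n-2i)\bigr)$ by independence of the $U_{\ell,j}$ from the exploration.

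To finish, use~(\ref{eq:defhS}) to evaluate $\sum_{\ell=1}^i\hd_a(\ell)=\hS_a(i)+i-d_a\leq M+i$ and apply the monotonicity $\Bin(k,p)\leq_{st}\Bin(k',p)$ for $k\leq k'$ to inflate the number of trials from $\sum_\ell\hd_a(\ell)$ up to $M+i$, giving~(\ref{upinc11}).

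The only non-routine step is the uniform control of the denominator in the one-step bound, which is what the handshake-type inequality $X_a(\ell)\leq \hS_a(\ell)/2$ — a direct corollary of~(\ref{eq:S(a)}) — delivers. The trial-counting is an algebraic identity, and the passage from a conditionally-dominated Bernoulli array to an i.i.d.\ Binomial via the auxiliary uniforms $(U_{\ell,j})$ is a standard coupling.
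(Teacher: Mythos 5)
Your iteration scheme (the auxiliary uniforms, the trial count $\sum_\ell\hd_a(\ell)\leq M+i$ via~(\ref{eq:defhS}), and the passage to a single binomial) is fine and matches the implicit tail of the paper's argument. The gap is in the one-step probability bound. Writing $M:=\max_{\ell\leq i}\hS_a(\ell)$, you bound the numerator by $(S_a(\ell)-1)^+\leq \hS_a(\ell)\leq M$ and, separately, the denominator by $m^{(n)}-2(X_a(\ell)+\ell)\geq n-M-2i$ (using $X_a(\ell)\leq M/2$). That only gives a probability bound of $M/(n-M-2i)$, which is \emph{larger} than the target $M/(n-2i)$ whenever $0<M<n-2i$. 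Your parenthetical that ``$n-M-2i\geq n-2i$ in the non-trivial regime'' is simply false ($n-M-2i\leq n-2i$ always, with equality only at $M=0$), and the trivial case you dispose of ($M\geq n-2i$) does not cover the problematic range $0<M<n-2i$. So the claimed conditional domination $X_a(\ell+1)-X_a(\ell)\leq_{st}\Bin(\hd_a(\ell+1),M/(n-2i))$ does not follow from your estimates.

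The paper avoids this by \emph{not} decoupling the numerator and denominator: it keeps the term $-2X_a(\ell)$ in both, writes
\begin{eqnarray*}
\frac{(S_a(\ell)-1)^+}{m^{(n)}-2(X_a(\ell)+\ell)}
\;\leq\;\frac{\hS_a(\ell)-2X_a(\ell)}{\,(m^{(n)}-2\ell)-2X_a(\ell)\,}
\;\leq\;\frac{\hS_a(\ell)}{m^{(n)}-2\ell}
\;\leq\;\frac{M}{n-2i},
\end{eqnarray*}
where the middle step is the elementary monotonicity $\frac{a-c}{b-c}\leq\frac{a}{b}$ for $0\leq c<b$ and $a\leq b$, applied with $a=\hS_a(\ell)$, $b=m^{(n)}-2\ell$, $c=2X_a(\ell)$; the requisite inequality $a\leq b$ holds because the total number of half-edges ever seen after $\ell$ exploration steps is $\hS_a(\ell)+2\ell\leq m^{(n)}$. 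This correlated cancellation is the single non-routine observation, and it is precisely what your decoupled estimate throws away. If you replace your separate numerator/denominator bounds by this chain, the rest of your write-up goes through unchanged.
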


An important ingredient in the proof will be the coupling of the
forward-degree sequence $\{\hd(i)\}$ to an i.i.d. sequence in the range
$i \leq\beta_n$, that we provide in the next subsection.

Recall that we defined $\alpha_n$ and $\beta_n$ as follows [cf.
equation (\ref{eq:defab})]:
\[
\alpha_n = \bigl\lfloor\log^3 n\bigr\rfloor\quad\mbox{and}\quad
\beta_n = \biggl\lfloor3\sqrt {\frac{\mu}{\nu-1} n \log n}\biggr
\rfloor.
\]

\subsection{Coupling the forward-degrees sequence \texorpdfstring{$\widehat{d}_a(i)$}{$d_a(i)$}} \label{sse:fpp-coupling}

We now present a coupling of the variables $\{\hd_a(1), \ldots, \hd_a(k)
\}$
valid for $k\leq\beta_n$, where $\beta_n$ is defined in equation
(\ref{eq:defab}), with an i.i.d. sequence of random variables, that we
now define. Let $\Delta_n:= \max_{i\in[1,n]} d_i$. Note that by
Condition \ref{cond-dil}(ii), we have $\Delta_n = O(n^{1/2-\varepsilon})$.

Denote the order statistics of the sequence of degrees $(d_i^{(n)})$ by
%
\begin{equation}
\label{in:order} d^{(n)}_{(1)}\leq d^{(n)}_{(2)}
\leq\cdots\leq d^{(n)}_{(n)}.
\end{equation}
Define $\um^{(n)}:=\sum_{i=1}^{n-\beta_n} d^{(n)}_{(i)}$, and let $\upi
^{(n)}$ be
the size-biased empirical distribution with the $\beta_n$ highest
degrees in (\ref{in:order}) removed, that is,
\[
\upi^{(n)}_k:= \frac{\sum_{i=1}^{n-\beta_n} (k+1) \ind
[d^{(n)}_{(i)}=k+1 ]}{\um^{(n)}}.
\]
Similarly, define $\bm^{(n)}:=\sum_{i=(\beta_n+1)\Delta_n}^{n}
d^{(n)}_{(i)}$, and let $\bpi^{(n)}$ be the size-biased empirical distribution
with the $(\beta_n+1)\Delta_n$ lowest degrees in (\ref{in:order})
removed, that is,
\[
\bpi^{(n)}_k:= \frac{\sum_{i=(\beta_n+1)\Delta_n}^{n} (k+1) \ind
[d^{(n)}_{(i)}=k+1 ]}{\bm^{(n)}}.
\]
Note that by Condition \ref{cond-dil}, we have
$\beta_n\Delta_n=o(n)$ which implies that both the distributions $\upi^{(n)}$
and $\bpi^{(n)}$ converge to the size-biased distribution $q$ defined
in equation~(\ref{eq:defq}) as $n$ tends to infinity.

The following basic lemma, proved in \cite{amdrle-fl}, Lemma~4.1, shows
that the forward-degree of the $i$th vertex given the forward-degrees
of all the previous vertices is stochastically between two random
variables with lower and upper distributions $\bpi^{(n)}$ and $\upi
^{(n)}$ defined above, provided that $ i\leq\beta_n$. More precisely:


\begin{lemma}\label{lem-coupl}
For a uniformly chosen vertex $a$, we have for all $i \leq\beta_n$,
%
\begin{equation}
\label{eq:coupl}\uD^{(n)}_i \leq_{\mathrm{st}} \bigl(
\hd_a(i) | \hd_a(1),\ldots, \hd_a(i-1)
\bigr) \leq_{\mathrm{st}} \bD^{(n)}_i,
\end{equation}
where $\uD^{(n)}_i$ (resp., $\bD^{(n)}_i$) are i.i.d. with distribution
$\upi^{(n)}$ (resp., $\bpi^{(n)}$).

In particular, we have for all $i \leq\beta_n$,
\[
\sum_{k=1}^{i} \uD^{(n)}_k
\leq_{\mathrm{st}} \sum_{k=1}^{i}
\hd_a(k) \leq_{\mathrm{st}} \sum_{k=1}^{i}
\bD^{(n)}_k.
\]
\end{lemma}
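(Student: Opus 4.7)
The plan is to identify the one-step conditional law of $\hd_a(i)$ given the history of the exploration, establish uniform stochastic bounds on it, and then assemble the coupling by the inverse-CDF construction.

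First I read off the one-step conditional distribution from the exploration process of Section~\ref{sec:explor}. At step $i$, the half-edge drawn from $L$ is matched to a uniformly chosen half-edge among those at vertices outside $B(a,T_a(i-1))$, so the next revealed vertex is picked with probability proportional to its degree. Writing $\pi_S$ for the size-biased forward-degree distribution on $S\subseteq V$ (i.e., the law of $d_V-1$ with $V$ size-biased on $S$), we therefore have
\begin{equation*}
\bigl(\hd_a(i)\,\bigm|\,\hd_a(1),\ldots,\hd_a(i-1)\bigr) \stackrel{d}{=} \pi_{V\setminus B(a,T_a(i-1))}.
\end{equation*}
Since $|B(a,T_a(i-1))|=i\leq\beta_n$ throughout the range of interest, the lemma reduces to proving the stochastic dominances $\upi^{(n)}\leq_{st}\pi_{V\setminus B}\leq_{st}\bpi^{(n)}$ uniformly over all $B\subseteq V$ with $|B|\leq\beta_n$.

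The heart of the proof is these uniform bounds. Fix $k\geq 0$, set $A:=\{i\in V:d_i\geq k+1\}$ and write $\tau(S):=\sum_{i\in S}d_i$, so that $\PP_{\pi_{V\setminus B}}(X\geq k)=\tau(A\setminus B)/\tau(V\setminus B)$, with the analogous expression for $\bpi^{(n)}$ in which $B$ is replaced by the block $D$ of the $(\beta_n+1)\Delta_n$ lowest-degree vertices. A one-step monotonicity check shows that removing a vertex with $d_i<k+1$ strictly increases the ratio (numerator unchanged, denominator shrinks), while removing one with $d_i\geq k+1$ decreases it (because the ratio lies in $[0,1]$ and decreasing numerator and denominator by the same amount pushes it down). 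Hence the maximum of $\tau(A\setminus B)/\tau(V\setminus B)$ over $|B|\leq\beta_n$ is achieved by deleting the $|B|$ largest-degree vertices outside $A$; calling their degree sum $T$, we have $T\leq\beta_n\Delta_n\leq(\beta_n+1)\Delta_n$. Because every $d_i\geq 1$, the bottom block satisfies $\tau(D)\geq(\beta_n+1)\Delta_n\geq T$, and comparing denominators in $\tau(A)/(\tau(V)-T)\leq\tau(A\setminus D)/(\tau(V)-\tau(D))$ yields the desired inequality; the boundary cases where $|V\setminus A|<|B|$ or where $D$ itself reaches into $A$ both force each side to equal $1$ and cause no trouble. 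The lower bound $\upi^{(n)}\leq_{st}\pi_{V\setminus B}$ is proved symmetrically: the minimum of $\tau(A\setminus B)/\tau(V\setminus B)$ over $|B|\leq\beta_n$ is attained by removing the top $\beta_n$ vertices, which is exactly the construction defining $\upi^{(n)}$, so the inequality holds with equality at the extremal~$B$.

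With the uniform bounds in hand, I assemble the coupling by the inverse-CDF trick. Draw an i.i.d.\ sequence $U_1,U_2,\dots$ of $\mathrm{Unif}[0,1]$ variables independent of the graph, and set $\hd_a(i)=F_{\pi_{V\setminus B(a,T_a(i-1))}}^{-1}(U_i)$, $\uD_i^{(n)}=F_{\upi^{(n)}}^{-1}(U_i)$, and $\bD_i^{(n)}=F_{\bpi^{(n)}}^{-1}(U_i)$. The stochastic dominance then upgrades to the pointwise sandwich $\uD_i^{(n)}\leq\hd_a(i)\leq\bD_i^{(n)}$; the sequences $\{\uD_i^{(n)}\}$ and $\{\bD_i^{(n)}\}$ are i.i.d.\ because the $U_i$ are; and the ``In particular'' sum inequality follows by summing these pointwise inequalities. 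The main obstacle is the uniform-in-past stochastic dominance, whose proof rests on the fact that $(\beta_n+1)\Delta_n$ is chosen precisely so as to dominate the worst-case total half-edge removal $\tau(B)\leq\beta_n\Delta_n$.
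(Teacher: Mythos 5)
Your proof is correct, but it follows a genuinely different route from the paper's. The paper proves the conditional dominance by an explicit balls-in-bins coupling: half-edges are enumerated consistently with the degree order statistics, the balls of the $j-1$ already-revealed bins are coloured red, a uniform ball among the last $n-(\beta_n+1)\Delta_n$ bins samples $\bD^{(n)}_j$, and a red pick is re-mapped to the corresponding white ball, which necessarily lies in the first $(\beta_n+1)\Delta_n$ bins because at most $\beta_n\Delta_n$ balls are red; the sum statement is then obtained by quoting the Fernholz--Ramachandran domination lemma (Lemma~\ref{lem:fpp-domin}). You instead identify the exact conditional law of $\hd_a(i)$ given the full exploration history as size-biased sampling without replacement from $V\setminus B$, prove the tail bounds $\upi^{(n)}\leq_{st}\pi_{V\setminus B}\leq_{st}\bpi^{(n)}$ uniformly over all $B$ with $|B|\leq\beta_n$ by an extremal analysis of the ratio $\tau(A\setminus B)/\tau(V\setminus B)$, and then assemble everything with a common-uniform (inverse-CDF) coupling, which delivers the pointwise sandwich and hence the sum inequality without any external lemma. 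What your version buys is self-containedness and transparency: the extremal sets make explicit why the two thresholds appear (the worst case removes at most $\beta_n\Delta_n$ half-edge mass, while the bottom block carries total degree at least $(\beta_n+1)\Delta_n$ since all degrees are positive), which is exactly the counting hidden in the paper's ``first $(\beta_n+1)\Delta_n$ bins'' remark; the paper's coupling, in exchange, never needs to compute tail ratios. Two small imprecisions in your write-up, neither a real gap: in the case where the bottom block $D$ meets $A=\{d_i\geq k+1\}$ it is only the right-hand tail $\tau(A\setminus D)/\tau(V\setminus D)$ that is forced to equal $1$ (since then $V\setminus D\subseteq A$), not ``each side'', but that alone settles the inequality; and the reduction from conditioning on $(\hd_a(1),\dots,\hd_a(i-1))$ to conditioning on the realized set $B$ should be stated as an averaging step (uniform dominance under the finer $\sigma$-field implies dominance under the coarser one), which your ``reduces to'' uses implicitly. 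Finally, for $i>I_a$ the relevant ``$B$'' is the set of vertices consumed by the extended sampling rather than a metric ball, but your argument only uses $|B|\leq\beta_n$, so it covers that range as well.
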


\section{Proof of the upper bound}\label{sec:fpp-upper}
In this section we present the proof of the upper bound for Theorem~\ref
{thm-fpp-main}. Namely we prove that for any $\varepsilon> 0$, with high
probability for all vertices $u$ and $v$ which are in the same
component [i.e., such that $\dist_w(u,v) < \infty$], we have
\[
\dist_w(u,v) \leq \biggl(\frac{1}{\nu-1} + \frac{2}{\Gamma(\dmin)}
\biggr)
(1+\varepsilon) \log n,
\]
where $\Gamma(\dmin)$ is defined in (\ref{def:Gamma}).


The proof will be based on the following two technical propositions.
For the sake of readability, we postpone the proof of these two
propositions to the end of this section.

The first one roughly says that for all~$u$ and $v$, the growing balls
centered at $u$ and $v$ intersect w.h.p. provided that they contain
each at least $\beta_n$ nodes. More precisely:

\begin{proposition}\label{prop-up}
We have w.h.p.
\[
\dist_w(u,v) \leq T_u(\beta_n) +
T_{v}(\beta_n)\qquad \mbox{for all $u$ and $v$}.
\]
\end{proposition}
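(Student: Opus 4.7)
The plan is to show that for every fixed pair $(u,v)$, the probability that the inequality fails is $o(n^{-2})$, then conclude by a union bound over the $O(n^2)$ pairs. The inequality is trivially true whenever $T_u(\beta_n)=\infty$ or $T_v(\beta_n)=\infty$ (in particular, whenever $u$ or $v$ lies in a component of size at most $\beta_n$), so I restrict attention to the event where both balls $B_u := B_w(u,T_u(\beta_n))$ and $B_v := B_w(v,T_v(\beta_n))$ have exactly $\beta_n+1$ vertices. As soon as $B_u \cap B_v \neq \emptyset$, the triangle inequality gives $\dist_w(u,v) \leq T_u(\beta_n) + T_v(\beta_n)$, so it suffices to prove that the two balls intersect with probability $1 - o(n^{-2})$ for each fixed pair.

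I would argue this by running the exploration of Section~\ref{sec:explor} in two stages: first grow $B_u$ together with its list $L_u$ of unmatched half-edges (so $|L_u| = S_u(\beta_n)$), then start the exploration of $B_v$. The first key substep is a lower bound on $|L_u|$. Using Lemma~\ref{lem-coupl} to stochastically dominate $\sum_{k \le \beta_n}\hd_u(k)$ from below by an i.i.d.\ sum whose mean per term tends to $\nu>1$, a standard Chernoff bound gives $\hS_u(\beta_n) \ge (\nu-1-\epsilon)\beta_n$ with failure probability $\exp(-\Omega(\beta_n))$, much smaller than any negative power of $n$. Lemma~\ref{lem-upinc11}, combined with the upper-bound side of the same coupling, dominates $X_u(\beta_n)$ stochastically by a binomial of mean $O(\beta_n^2/n) = O(\log n)$; hence $X_u(\beta_n) = o(\beta_n)$ with probability $1-o(n^{-2})$. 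Together these define a good event $\cG_u$ of probability $1-o(n^{-2})$ on which $S_u(\beta_n) \ge c_1\beta_n$ for any constant $c_1<\nu-1$.

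I would then condition on $\cG_u$ and run the exploration from $v$. Each of the $\beta_n$ forward-matching steps pairs one half-edge of $L_v$ with a uniformly chosen half-edge in the pool of unmatched half-edges outside $L_v$; this pool has total size at most $m^{(n)}(1+o(1)) = \mu n(1+o(1))$ and contains $L_u$. For the two balls to remain disjoint, every such forward match must avoid $L_u$, so conditional on the past and on no prior intersection the probability of avoidance at a single step is at most $1 - c_1\beta_n/(2\mu n)$. Multiplying these conditional probabilities over the $\beta_n$ forward matches yields
\[
\PP\bigl(B_u \cap B_v = \emptyset \,\big|\, \cG_u\bigr) \le \left(1 - \frac{c_1 \beta_n}{2\mu n}\right)^{\beta_n} \le \exp\!\left(-\frac{c_1 \beta_n^2}{2\mu n}\right).
\]
Substituting $\beta_n^2 = 9\mu n \log n/(\nu-1)$ turns this bound into $n^{-9c_1/(2(\nu-1))}$, and taking $c_1$ close to $\nu-1$ gives an exponent arbitrarily close to $9/2$, comfortably above $2$. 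Adding the probability of $\cG_u^c$ and applying a union bound over the $O(n^2)$ pairs concludes the proof.

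The main obstacle is the careful bookkeeping of the two-stage coupling, and in particular the conditional-probability product above. The underlying fact that makes it go through is that in the configuration model, conditioned on any partial pairing the remaining half-edges are matched uniformly at random; so once $B_u$ and $L_u$ have been exposed, pairing half-edges of $L_v$ against the remaining pool is genuinely uniform, which is exactly the ingredient I invoke. Making this uniformity precise at each step, with the right filtration $\cF_t$ and correct handling of back-matchings inside $B_v$ (which can only help the intersection event), is where most of the technical care is needed. The concentration estimate on $S_u(\beta_n)$ itself is likewise just the combination of Lemma~\ref{lem-coupl} with the upper bound on $X_u$ from Lemma~\ref{lem-upinc11}, so no new machinery is required.
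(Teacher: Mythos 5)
Your overall strategy is the same as the paper's: reduce the inequality to the event that the two balls of size $\beta_n$ intersect, certify that the first ball has $\Theta(\beta_n)$ unmatched boundary half-edges, bound the probability that all $\beta_n$ forward matches of the second exploration avoid them by $\bigl(1-c_1\beta_n/(2\mu n)\bigr)^{\beta_n}$, and use $\beta_n^2=\Theta(n\log n)$ together with a union bound over pairs; that part is fine. The weak point is the step that is supposed to give $S_u(\beta_n)\geq c_1\beta_n$ with failure probability $o(n^{-2})$. You bound $X_u(\beta_n)$ via Lemma~\ref{lem-upinc11} and assert the dominating binomial has mean $O(\beta_n^2/n)=O(\log n)$; but the parameters of that binomial involve $\max_{\ell\leq\beta_n}\hS_u(\ell)$, so your assertion silently requires $\max_{\ell\leq\beta_n}\hS_u(\ell)=O(\beta_n)$ on an event of probability $1-o(n^{-2})$, and you give no argument for this. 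It does not follow from "the upper-bound side of the coupling plus a standard Chernoff bound": under Condition~\ref{cond-dil} the size-biased forward degrees $\bD^{(n)}_i$ have only a uniformly bounded $(1+\epsilon)$-th moment (the degrees have $2+\epsilon$ moments), so they have no exponential moments, and a Markov-type bound on the upper tail of $\sum_{k\leq\beta_n}\bD^{(n)}_k$ only yields a failure probability of order $\beta_n^{-\epsilon}=n^{-\epsilon/2+o(1)}$, far from $o(n^{-2})$ when $\epsilon$ is small. So as written, the good event $\cG_u$ is not established at the level you need, and this is exactly the place where the heavy-tailed degree regime bites.

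The statement you want is nevertheless true and the gap is repairable in two ways. One is to exploit the truncation $\hd_u(k)\leq\Delta_n=O(n^{1/2-\epsilon'})=o(\beta_n)$ coming from Condition~\ref{cond-dil}(ii) and run an exponential-moment bound at scale $\theta\asymp 1/\Delta_n$, which makes $\PP\bigl(\hS_u(\beta_n)\geq K\beta_n\bigr)$ stretched-exponentially small; but this is a genuinely nontrivial estimate that your proposal does not supply. The paper's route avoids the upper tail of $\hS_u$ altogether: its Lemma~\ref{lem-R} combines Lemma~\ref{lem:upX} (tree excess dominated by $\Bin\bigl(\hS_u(k),\sqrt{\hS_u(k)/n}\bigr)$) with Corollary~\ref{cor-bin}, which says the map $y\mapsto y-\Bin(y,\sqrt{y/n})$ is stochastically increasing for $y=o(n)$; hence only the exponentially concentrated lower tail of $\hS_u(\beta_n)$ (your Lemma~\ref{lem-coupl} plus Chernoff step, i.e.\ Lemma~\ref{lem-upp}) together with the crude deterministic bound $\hS_u(\beta_n)\leq(\beta_n+1)\Delta_n=o(n)$ is needed to conclude $S_u(\beta_n)\geq(\nu-1-o(1))\beta_n$ with failure $o(n^{-5})$. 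Equivalently, you could patch your argument by a case analysis: when $\max_\ell\hS_u(\ell)$ is large, Lemma~\ref{lem-upinc11} still gives $X_u(\beta_n)=O\bigl(\hS_u^2/n\bigr)=o(\hS_u)$, so $S_u(\beta_n)=\hS_u(\beta_n)-2X_u(\beta_n)$ remains at least $c_1\beta_n$. With that step repaired, the rest of your two-stage exposure argument and the exponent $9c_1/\bigl(2(\nu-1)\bigr)>2$ match the paper's proof and go through.
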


The above proposition shows that in proving the upper bound, it will
be enough to control the random variable $T_u(\beta_n)$ for each node
$u$ in $V$. 
It turns out that in the range between $\alpha_n$ and $\beta_n$, in the
cases $\dmin\geq3$, $\dmin=2$ and $\dmin=1$, $T_u(k)$ have more or
less the same behavior; namely, it takes time at most roughly half of
the typical (weighted) distance to go from size $\alpha_n$ to $\beta
_n$. More precisely:

\begin{proposition}\label{lem-up-rq}
For a uniformly chosen vertex $u$ and any $\varepsilon>0$, we have
\[
\PP \biggl( T_u(\beta_n) - T_u(
\alpha_n) \geq\frac{(1+\varepsilon)\log
n}{2(\nu-1)} \Big\mid I_u \geq
\alpha_n \biggr) = o\bigl(n^{-1}\bigr).
\]
\end{proposition}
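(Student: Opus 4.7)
The plan is to exploit the exponential split-time structure of the exploration process described in Section~\ref{sec:explor}: we write
\[
T_u(\beta_n) - T_u(\alpha_n) \;=\; \sum_{k=\alpha_n}^{\beta_n - 1} \tau_k,
\]
where, given $\cF_{T_u(k)}$, the increments $\tau_k$ are independent with $\tau_k \sim \mathrm{Exp}(S_u(k))$ and $S_u(k) = \hS_u(k) - 2X_u(k)$. The target probability will be bounded by establishing (a) a uniform lower bound $S_u(k) \geq (1-\delta)(\nu-1)k$ for $k \in [\alpha_n,\beta_n]$ with failure probability $o(n^{-1})$, and (b) on this good event, a concentration estimate showing $\sum_k \tau_k$ exceeds its approximate mean $\log(\beta_n/\alpha_n)/(\nu-1) \sim \log n / (2(\nu-1))$ by more than a factor $1+\epsilon$ only with probability $o(n^{-1})$. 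Note that the good event automatically forces $I_u \geq \beta_n$ (since $S_u(k) > 0$ for all $k \leq \beta_n$), so the difference is finite whenever the good event occurs.

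For (a), Lemma~\ref{lem-coupl} yields $\hS_u(k) \geq_{st} d_u - k + \sum_{i=1}^{k} \uD_i^{(n)}$, where the $\uD_i^{(n)}$ are i.i.d.\ nonnegative with mean tending to $\nu > 1$ and bounded $(2{+}\epsilon)$-th moment (by Condition~\ref{cond-dil}(ii)). Since the $\uD_i^{(n)}$ are nonnegative, a standard one-sided Chernoff estimate yields $\PP(\sum_{i=1}^{k} \uD_i^{(n)} \leq (1-\delta')\nu k) \leq e^{-c(\delta') k}$; summing over $k \in [\alpha_n,\beta_n]$ gives total failure probability at most $O(e^{-c\alpha_n}) = o(n^{-1})$ since $\alpha_n = \log^3 n$. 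Symmetrically, the upper coupling of Lemma~\ref{lem-coupl} yields $\max_{k \leq \beta_n} \hS_u(k) = O(\beta_n)$ with the same probability. Lemma~\ref{lem-upinc11} then bounds $X_u(k) \leq_{st} \Bin(O(\beta_n), O(\beta_n/n))$, whose mean $O(\beta_n^2/n) = O(\log n)$ is small; a further Chernoff estimate gives $X_u(k) = O(\log^2 n)$ uniformly for $k \leq \beta_n$ outside an event of probability $o(n^{-1})$. Since $\log^2 n = o(\alpha_n)$, choosing $\delta'$ appropriately delivers $S_u(k) = \hS_u(k) - 2 X_u(k) \geq (1-\delta)(\nu-1)k$ on the good event.

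For (b), conditional on the good event, $T_u(\beta_n) - T_u(\alpha_n)$ is stochastically dominated by
\[
Y \;:=\; \frac{1}{(1-\delta)(\nu-1)} \sum_{k=\alpha_n}^{\beta_n-1} \frac{E_k}{k},
\]
with the $E_k$ i.i.d.\ rate-one exponentials. Applying the Chernoff bound with Laplace parameter $t = (1-\delta)(\nu-1)\alpha_n/2$ and using $-\log(1-x) \leq x + 2x^2$ on $[0,1/2]$, one finds $\log\EE[e^{tY}] \leq (\alpha_n/2)\log(\beta_n/\alpha_n) + O(1) = \alpha_n \log n/4 \cdot (1+o(1))$. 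Hence for any $\eta > 0$,
\[
\PP\!\left(Y \geq (1+\eta)\,\frac{\log n}{2(1-\delta)(\nu-1)}\right) \;\leq\; \exp\!\left(-\frac{\eta}{4}\,\alpha_n\,\log n\,(1+o(1))\right) \;=\; \exp\!\left(-\Omega(\log^4 n)\right),
\]
which is far smaller than $n^{-1}$. Choosing $\delta, \eta > 0$ small so that $(1+\eta)/(1-\delta) \leq 1+\epsilon$ then yields the proposition.

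The principal obstacle is part (a): the $o(n^{-1})$ uniform lower bound on $S_u(k)$ requires carefully combining the stochastic couplings of Lemmas~\ref{lem-coupl} and~\ref{lem-upinc11} with one-sided Chernoff bounds on nonnegative sums, and the choice $\alpha_n = \log^3 n$ is calibrated precisely so that the resulting stretched-exponential errors $e^{-\Omega(\alpha_n)}$ beat $n^{-1}$ by a wide margin. In part (b), note that a constant Laplace parameter would only yield a polynomial-in-$n$ bound of the form $n^{-c\eta}$, insufficient here; scaling $t$ proportionally to $\alpha_n$ is what upgrades the sub-Gaussian-scale tail to the stretched-exponential rate required to beat $n^{-1}$.
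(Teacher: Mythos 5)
Your two-stage architecture is essentially the paper's own: your good event in (a) is the event $R''_u$ of Lemma~\ref{lem-R}, and your step (b) reproduces the paper's exponential-moment/Markov computation, with a Laplace parameter proportional to $\alpha_n$ instead of the paper's $\sqrt{\alpha_n}$ (both give superpolynomially small tails, so (b) is fine).

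The gap is in step (a), at the control of the tree excess. You claim that, \emph{symmetrically} to the lower tail, the upper coupling of Lemma~\ref{lem-coupl} gives $\max_{k\le\beta_n}\hS_u(k)=O(\beta_n)$ outside an event of probability $o(n^{-1})$, and you then feed this into Lemma~\ref{lem-upinc11} to get $X_u(k)\le_{st}\Bin(O(\beta_n),O(\beta_n/n))$. There is no such symmetry: the one-sided bound of Lemma~\ref{lem-upp} works because the variables are nonnegative, so $\EE[e^{-\theta\uD^{(n)}_1}]$ is automatically finite, whereas an upper-tail Chernoff bound at a fixed parameter would require exponential moments of the size-biased forward-degree, which Condition~\ref{cond-dil}(ii) does not give. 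Indeed, Condition~\ref{cond-dil}(ii) only yields a uniformly bounded $(1+\epsilon)$-th moment of $\bD^{(n)}_1$ (not the $(2+\epsilon)$-th moment you quote, which pertains to the degree itself), so for $p_k\asymp k^{-(3+\epsilon)}$ the law $\bpi^{(n)}$ is genuinely heavy tailed and a ``standard'' Chernoff estimate is unavailable. The statement $\sum_{i\le\beta_n}\bD^{(n)}_i=O(\beta_n)$ with failure probability $o(n^{-1})$ can in fact be rescued, but only through an argument you do not supply: one must use the truncation $\bD^{(n)}_1\le\Delta_n=O(n^{1/(2+\epsilon)})$ together with a Chernoff parameter of order $1/\Delta_n$, yielding a bound of order $\exp(-c\,\beta_n/\Delta_n)=\exp(-c\,n^{\delta})$. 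The paper sidesteps the issue entirely: Lemma~\ref{lem:upX} dominates $2X_u(k)$ by $\Bin\bigl(\hS_u(k),\sqrt{\hS_u(k)/n}\bigr)$ \emph{conditionally} on $\hS_u(k)$, the deterministic bound $\hS_u(k)\le(\beta_n+1)\Delta_n=o(n)$ makes Corollary~\ref{cor-bin} applicable, and the monotonicity of $y\mapsto y-\Bin(y,\sqrt{y/n})$ means only the \emph{lower} bound $\hS_u(k)\ge\frac{\nu-1}{1+\epsilon/2}k$ is needed (a large $\hS_u(k)$ can only help); a binomial Chernoff bound then shows the subtracted term is at most $k/\sqrt{\alpha_n}=o(k)$ with probability $1-o(n^{-6})$. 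Two smaller points: the identity $S_u(k)=\hS_u(k)-2X_u(k)$ holds only for $k\le I_u$, so the lower bound on $S_u(k)$ has to be propagated as in the paper's chaining $R''_u(k-1)\subset\{I_u\ge k\}$ (otherwise ``the good event forces $I_u\ge\beta_n$'' is circular); and since your estimates are unconditional, you should state explicitly that the conditional claim follows after dividing by $\PP(I_u\ge\alpha_n)=\Theta(1)$, which holds since $\nu>1$.
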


The conditioning $I_u \geq\alpha_n$ is here to ensure that the
connected component which contains $u$ has size at least
$\alpha_n$. In particular, note that one immediate corollary of the
two above propositions is that two nodes whose connected components
have size at least $\alpha_n$ are in the same component (necessarily
the giant component), and that the two balls of size $\beta_n$
centered at these two vertices intersect w.h.p.

Using the above two propositions, we are only left to understand
$T_u(\alpha_n)$, and for this we will need to consider the cases $\dmin
\geq2$ and $\dmin=1$ separately. Before going through the proof of the
upper bound in these cases, we need one more result. Consider the
exploration process started at a vertex $a$. We will need to find lower
bounds for $S_a(k)$ in the range $1\leq k\leq\alpha_n$. Recall that we
defined $\gamma_a(k)$ as the number of nodes of forward-degree at least
two in the growing balls centered at~$a$; cf. equation (\ref{eq:defg})
for the precise definition. These nodes are roughly all the ones which
could contribute to the growth of the random variable $S_a(k)$. Now
define the two following events:
\begin{eqnarray*}
R_a &:=& \bigl\{ S_a(k) \geq\dmin+
\gamma_a(k),\mbox{ for all } 0 \leq k \leq\alpha_n-1
\bigr\},
\\
R'_a &:=& \bigl\{ S_a(k) \geq
\gamma_a(k),\mbox{ for all } 0 \leq k \leq\alpha_n-1
\bigr\}.
\end{eqnarray*}

\begin{lemma}\label{lem-R1}
Assume $d_a \geq2$ and $\hd_a(i)\geq1$ for all { $1\leq i \leq\alpha
_n$}. Then we have
%
\begin{eqnarray}
\PP \bigl( R_a\mid\hd_a(1),\ldots,
\hd_a(n-1) \bigr) &\geq& 1-o\bigl(\log^{10} n / n\bigr),
\label{pr-R1}
\\
\PP \bigl( R'_a\mid\hd_a(1),\ldots,
\hd_a(n-1) \bigr) &\geq& 1 - o\bigl(n^{-3/2}\bigr).
\label{pr-R2}
\end{eqnarray}
In particular, $\PP(R_a)\geq1-o(\log^{10} n / n)$ and $\PP(R'_a)\geq
1 - o(n^{-3/2})$.
\end{lemma}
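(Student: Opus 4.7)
The plan is to rewrite both events as pointwise inequalities on the tree excess $X_a(k)$ and then invoke the binomial stochastic dominance of Lemma~\ref{lem-upinc11}. Under the hypothesis $\hd_a(i)\geq 1$ for $1\leq i\leq\alpha_n$, the process $\hS_a(\cdot)$ is non-decreasing and
\begin{equation*}
\hS_a(k)\;=\;d_a+\gamma_a(k)+\sum_{i=1}^{k}(\hd_a(i)-2)^{+}.
\end{equation*}
Setting $Q_a(k):=d_a-\dmin+\sum_{i=1}^{k}(\hd_a(i)-2)^{+}\geq 0$, so that $\hS_a(k)=\dmin+\gamma_a(k)+Q_a(k)$, a short cascade argument starting from $S_a(k)=\hS_a(k)-2X_a(k)$ (each pointwise inequality $X_a(k)\leq Q_a(k)/2$ forces $S_a(k)\geq\dmin\geq 1$, so the exploration survives to step $k+1$ and the identity for $S_a$ extends inductively) gives
\begin{equation*}
\{X_a(k)\leq Q_a(k)/2:\,k\leq\alpha_n-1\}\subseteq R_a,\qquad \{X_a(k)\leq(Q_a(k)+\dmin)/2:\,k\leq\alpha_n-1\}\subseteq R_a'.
\end{equation*}

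Next, I would apply Lemma~\ref{lem-upinc11} conditionally on the sequence $(\hd_a(i))_{i\leq n-1}$. Since $\hS_a$ is non-decreasing, $\max_{\ell\leq k}\hS_a(\ell)=\hS_a(k)$, and the standard binomial moment bound $\PP(\Bin(N,p)\geq j)\leq (Np)^{j}/j!$ yields, for $k\leq\alpha_n$ and $n$ large,
\begin{equation*}
\PP\bigl(X_a(k)\geq j\bigm|\hd_a\bigr)\;\leq\;\frac{1}{j!}\left(\frac{2\hS_a(k)(\hS_a(k)+k)}{n}\right)^{j}.
\end{equation*}
The crucial observation is that $\gamma_a(k)\leq k\leq\alpha_n$ forces $\hS_a(k)\leq Q_a(k)+\dmin+\alpha_n$, so the bracketed quantity is at most $C(Q_a(k)+\alpha_n)^{2}/n$; this is $o(1)$ uniformly over admissible $\hd_a$, since Condition~\ref{cond-dil}(ii) gives $Q_a(k)\leq\alpha_n\Delta_n=O(n^{1/2-\epsilon}\log^{3}n)$ and hence $Q_a(k)^{2}/n\to 0$.

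I would then take $j=\lfloor Q_a(k)/2\rfloor+1$ for $R_a$ and $j=\lfloor(Q_a(k)+\dmin)/2\rfloor+1\geq 2$ for $R_a'$ (using $d_a\geq 2$), and split into the regimes $Q_a(k)\leq\alpha_n$ and $Q_a(k)>\alpha_n$. In the first regime the per-step bound reduces to $(C\alpha_n^{2}/n)^{j}/j!$, worst case $O(\log^{6}n/n)$ for $R_a$ (at $j=1$, i.e.\ $Q_a=0$) and $O(\log^{12}n/n^{2})$ for $R_a'$ (at $j\geq 2$). In the second regime, since $Q_a^{2}/n\to 0$, the bound $(CQ_a^{2}/n)^{Q_a/2+1}$ decays super-polynomially and is negligible. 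Union-bounding over the $\alpha_n$ values of $k$ then gives
\begin{equation*}
\PP(R_a^{c}\mid\hd_a)=O(\alpha_n\log^{6}n/n)=o(\log^{10}n/n),\qquad \PP({R_a'}^{c}\mid\hd_a)=o(n^{-3/2}),
\end{equation*}
and the unconditional statements follow by taking expectations. The main technical obstacle is making the conditional tail estimate uniform in the random $\hd_a$: a large forward-degree simultaneously inflates the tail of $X_a(k)$ (through $\hS_a$) and the slack $Q_a(k)$, and the decomposition $\hS_a(k)-Q_a(k)=\dmin+\gamma_a(k)\leq\dmin+\alpha_n$ is precisely what keeps these two effects in balance, so that the higher-moment decay of $X_a$ always dominates the loosening of the constraint.
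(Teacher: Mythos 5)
Your proof is correct, and it rests on the same two pillars as the paper's argument --- the relation $S_a(k)=\hS_a(k)-2X_a(k)$ combined with $\hS_a(k)\geq d_a+\gamma_a(k)$, and the binomial domination of the tree excess from Lemma~\ref{lem-upinc11} --- but you organize the estimate differently. The paper reduces $R_a$ and $R'_a$ to the constant-threshold events $\{X_a(\alpha_n)=0\}$ and $\{X_a(\alpha_n)\leq 1\}$, and deals with large forward-degrees by splitting on whether $\hS_a(\alpha_n)<3\alpha_n$, applying Lemma~\ref{lem-upinc11} only at $\alpha_n$ and at the crossing index of level $3\alpha_n$, and tolerating a constant excess $m=\lceil 2\epsilon^{-1}\rceil$ beyond that index (via $\Delta_n=O(n^{1/2-\epsilon})$). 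You instead let the admissible excess grow with the accumulated slack $Q_a(k)=\hS_a(k)-\dmin-\gamma_a(k)$, use the adaptive threshold $\lfloor Q_a(k)/2\rfloor+1$ at every $k\leq\alpha_n$, and union bound over $k$, splitting per index on whether $Q_a(k)\lessgtr\alpha_n$; the observation that a large slack inflates $\hS_a(k)$ and the threshold simultaneously, with the factorial/higher-moment decay winning because $Q_a(k)^2/n=o(1)$ uniformly, replaces the paper's crossing-time device and the explicit constant $m$. The cost is an extra factor $\alpha_n$ from the union bound, which is harmless against the stated rates $o(\log^{10}n/n)$ and $o(n^{-3/2})$; the gain is a single uniform per-step estimate with no case analysis on where $\hS_a$ becomes large. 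One small point to tighten: your cascade justification (the inequality forces $S_a(k)\geq\dmin\geq 1$, hence survival and the validity of the identity at the next step) covers the inclusion into $R_a$ but not the one into $R'_a$, since there the inequality only yields $S_a(k)\geq\gamma_a(k)$, which may vanish. If the exploration dies at $I_a<\alpha_n$, apply your inequality at $k=I_a$, where the identity still holds with $S_a(I_a)=0$, to conclude $\gamma_a(I_a)=0$; with the paper's conventions that $S_a$, $X_a$ and $\gamma_a$ are frozen after $I_a$, the required inequality $S_a(k)\geq\gamma_a(k)$ then holds trivially for $k>I_a$, so the claimed inclusion is still correct.
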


\begin{pf}
Since $\hd_a(i)\geq1$, $\hS_a(k)$ is nondecreasing in $k$. We have
for all $k\leq\alpha_n$,
%
\begin{equation}
\label{eq:ineqf} \dmin+ \gamma_a(k) \leq d_a +
\gamma_a(k) \leq\hS _a(k) \leq \alpha_n
\Delta_n = o(n),
\end{equation}
and moreover, $\max_{k\leq\alpha_n}\hS_a(k) = \hS_a(\alpha_n)$.
Since $d_a\geq2$ and $S_a(k) = \widehat S_a(k) - 2 X_a(k)$, we have
\[
\bigl\{X_a(\alpha_n)=0\bigr\}\subset
R_a, \qquad\bigl\{X_a(\alpha_n)\leq1\bigr\}
\subset R'_a.
\]
Note that the inequalities in (\ref{eq:ineqf}) are true for any
sequence such that $1\leq\hd_a(i)\leq
\Delta_n$. In particular, in the rest of the proof we condition on a
realization of the sequence $\bold{d}=(d_a,\hd_a(1),\ldots,\hd_a(n-1))$.

We distinguish two cases depending on whether or not $\hS_a(\alpha_n)$
is smaller than~$3 \alpha_n$. Denote this event by $\mathcal Q$ (and
its complementary by $\mathcal Q^c$), that is,
\[
\mathcal Q:= \bigl\{ \hS_a(\alpha_n) < 3
\alpha_n \bigr\}.
\]
\begin{itemize}
\item\textit{Case} (1). $\hS_a(\alpha_n) < 3 \alpha_n$.
Conditioning on $\mathcal Q$, by Lemma~\ref{lem-upinc11} we have
\[
X_a(\alpha_n) \leq_{\mathrm{st}} \Bin \biggl( 4
\alpha_n, \frac{3 \alpha
_n}{n-2\alpha_n} \biggr).
\]
Thus we have
\begin{eqnarray*}
\PP \bigl( X_a(\alpha_n) \geq1 \mid\mathcal Q,\bold{d}
\bigr) \leq \PP \biggl( \Bin \biggl(4 \alpha_n, \frac{3\alpha_n}
{n-2\alpha_n}
\biggr) \geq1 \biggr) &\leq& O\bigl(\alpha_n^2/n\bigr),
\\
\PP \bigl( X_a(\alpha_n) \geq2 \mid\mathcal Q,\bold{d}
\bigr) \leq \PP \biggl( \Bin \biggl(4 \alpha_n,
\frac{3 \alpha_n}{n-2\alpha_n}
\biggr) \geq2 \biggr) &\leq&O\bigl(\alpha_n^4/n^2
\bigr).
\end{eqnarray*}
We infer that
\begin{eqnarray*}
\PP \bigl( (R_a)^c \mid\mathcal Q,\bold{d} \bigr) &
\leq& O\bigl(\alpha _n^2/n\bigr),
\\
\PP \bigl( \bigl(R'_a\bigr)^c \mid
\mathcal Q,\bold{d} \bigr) &\leq& O\bigl(\alpha_n^4/n^2
\bigr).
\end{eqnarray*}

\item\textit{Case} (2). $\hS_a(\alpha_n) \geq3
\alpha_n$. Note that in this case, we still have
\[
\max_{k\leq\alpha_n}\hS_a(k) = \hS_a(
\alpha_n)\leq \alpha_n\Delta_n = o(n).
\]
Moreover, there exists $k\leq\alpha_n$ such that for all $\ell\leq k$,
$\hS_a(\ell)< 3\alpha_n$ and $\hS_a(k+1)\geq3\alpha_n $. {Note that
since we have conditioned on the degree sequence $\bold{d}$, the value
of $k$ is deterministic ($k$ is not a random variable).} Conditioning
on the event $\mathcal Q^c$, we obtain by Lemma~\ref{lem-upinc11},
%
\begin{eqnarray}
\label{in:kmax} X_a(k)&\leq_{\mathrm{st}}& \Bin \biggl( 4
\alpha_n,\frac{3\alpha_n}{n-2\alpha_n} \biggr) \quad\mbox{and}
\nonumber
\\[-8pt]
\\[-8pt]
\nonumber
X_a(\alpha_n) &\leq_{\mathrm{st}}& \Bin
\biggl(\alpha_n(\Delta_n+1), \frac{\alpha_n\Delta_n}{n-2\alpha_n} \biggr).
\end{eqnarray}

By Condition~\ref{cond-dil}(ii), there exists a $\varepsilon>0$ such
that $\Delta_n:= O(n^{1/2 -\varepsilon})$. Let $m = \lceil2 \varepsilon
^{-1}\rceil$. Combining the last (stochastic) inequality together with
the Chernoff's inequality applied to the right-hand side binomial
random variable, we obtain
\begin{eqnarray*}
\PP \bigl( X_a(\alpha_n) \geq m \mid\mathcal
Q^c,\bold{d} \bigr) &\leq & \PP \biggl(\Bin \biggl(
\alpha_n(\Delta_n+1), \frac{\alpha_n\Delta
_n}{n-2\alpha_n} \biggr) \geq m
\biggr)
\\
&=& O \bigl( \bigl(\Delta^2_n \alpha^2_n
/ n\bigr)^m \bigr) = o\bigl(n^{-3}\bigr).
\end{eqnarray*}
We notice that for all $\ell> k$, we have $S_a(\ell)\geq
2\alpha_n-2X_a(\alpha_n)$. Also for $n$ large enough, we have
$2\alpha_n-2m\geq\dmin+\gamma_a(\ell)$.
Therefore,
\begin{eqnarray*}
\bigl\{ X_a(k)=0, X_a(\alpha_n)\leq m,
\mathcal Q^c \bigr\} &\subset& R_a\cap \mathcal
Q^c \quad\mbox{and}
\\
\bigl\{ X_a(k)\leq1, X_a(\alpha_n)\leq
m, Q_1^c \bigr\} &\subset& R'_a
\cap\mathcal Q^c.
\end{eqnarray*}
This in turn implies that
\begin{eqnarray*}
\PP \bigl( (R_a)^c\mid\mathcal Q^c,
\bold{d} \bigr) &\leq& \PP \bigl( X_a(k)\geq1\mid\mathcal
Q^c \bigr)+\PP \bigl( X_a(\alpha_n) \geq m
\mid\mathcal Q^c \bigr)
\\
&\leq& O\bigl(\alpha_n^2/n\bigr)\quad \mbox{and}
\\
\PP \bigl( \bigl(R'_a\bigr)^c\mid
\mathcal Q^c,\bold{d} \bigr) &\leq& \PP \bigl( X_a(k)
\geq2\mid \mathcal Q^c \bigr)+ \PP \bigl( X_a(
\alpha_n) \geq m\mid\mathcal Q^c \bigr)
\\
&\leq& O\bigl(\alpha_n^4/n^2\bigr).
\end{eqnarray*}
In the above inequalities, we used (stochastic) inequality~(\ref
{in:kmax}) and case (1) to bound the terms $\PP ( X_a(k)\geq1\mid
\mathcal Q^c  )$ and $\PP ( X_a(k)\geq2\mid
\mathcal Q^c  )$.
\end{itemize}
The lemma follows by the definition of $\alpha_n$.
\end{pf}

We are now in position to provide the proof of the upper bound in the
different cases depending on whether $\dmin\geq2$ or $\dmin=1$.


\textit{In what follows, we will use the following property of the exponential
random variables, without sometimes mentioning: If $Y$ is an
exponential random
variable of rate $\mu$, then for any $\theta<\mu$, we have
$\EE [ e^{\theta Y} ] = \frac{\mu}{\mu-\theta}$.}

\subsection{Proof of the upper bound in the case \texorpdfstring{$\dmin\geq2$}{$d_{\min}>=2$}}

Consider the exploration process defined in Section~\ref{sec:explor}
starting from $a$.
Recall definitions (\ref{eq:defg}) and (\ref{eq:defbT}):
$\gamma_a(i)$ is the number of nodes with forward-degree (strictly)
larger than one until the $i$th exploration step, and $\bT_a(k)$ is the
first time
that the $k$th node with the forward-degree (strictly) larger than one appears
in the exploration process started at node~$a$.
We also define the sets
\[
L_a(k):= \bigl\{ \ell, \bT_a(k)\leq
T_a(\ell) <\bT_a(k+1) \bigr\},
\]
for $k\geq0$, and let $n_a(k)$ be the smallest $\ell$ in $L_a(k)$.
Clearly, we have $n_a(k)\geq k$ and
\[
\gamma_a^{-1}(k) = L_a(k) =
\bigl[n_a(k),n_a(k+1)-1\bigr].
\]

 Note that in the case $\dmin\geq3$, we have $q_1=\bpi
_1^{(n)}=\upi_1^{(n)}=0, \gamma_a(k) = k$, $\bT_a(k) = T_a(k)$ and
$L_a(k) = \{k\}$. This case is also treated in \cite{amdrle-fl}.
However, our arguments bellow are still valid in this case.

For $x, y \in\mathbb{R}$, we denote $x \wedge y = \min(x,y)$. We will
need the following lemma.

\begin{lemma}\label{lem-up-bis2}
For a uniformly chosen vertex $a$, any $x>0$ and any $\ell=O(\log n)$,
we have
\[
\PP \bigl( T_a(\alpha_n \wedge I_a)\geq
x \log n + \ell \bigr) \leq o\bigl(n^{-1}\bigr) + o
\bigl(e^{-\dmin(1-q_1)\ell}\bigr).
\]
\end{lemma}

\begin{pf}
Recall that given the sequence $S_a(k)$, for $k<I_a$, the random
variables $T_a(k+1)-T_a(k)$ are i.i.d. exponential random variables
with mean $S_a(k)^{-1}$.
First write
\begin{eqnarray*}
T_a(\alpha_n ) &=& \sum_{0\leq j < \alpha_n }T_a(j+1)-T_a(j)
\\
&\leq& \sum_{k\leq K_n}\bT_a(k+1) -
\bT_a(k),
\end{eqnarray*}
where $K_n$ is the largest integer such that $n_a(K_n)\leq\alpha_n$.

We now show that for any $x>0$ and $\ell=O(\log n)$,
%
\begin{equation}
\label{eq:Ra}\PP \bigl( T_a(\alpha_n)\geq x \log n +
\ell, R_a \bigr) = o\bigl(e^{-\dmin(1-q_1)\ell}\bigr).
\end{equation}

Note that a sum of a geometric (with parameter $\pi$) number of
independent exponential random variables with parameter $\mu$ is
distributed as an exponential random variable with parameter
$(1-\pi)\mu$.
For any $k\leq K_n$, we have
\[
\bT_a(k+1) - \bT_a(k) = \sum
_{j\in L_a(k)}T_a(j+1)-T_a(j).
\]
Assume $R_a$ holds. Then we have $S_a(j)\geq\dmin+k$ for all $j\in
[n_a(k),n_a(k+1)-1]=L_a(k)$. { Thus
\[
T_a(j+1)-T_a(j) \leq_{\mathrm{st}}
Y_{k,i} \sim\operatorname{Exp}(\dmin+k),
\]
where $i = j - n_a(k) + 1$, and all the $Y_{k,i}$'s are independent.
[For $i =1, \ldots,\break |L_a(k)|$, $Y_{k,i}$ are exponential random
variables with rate $\dmin+k$.]}

For any positive $t$ and $\theta$, we obtain [for $\mathbf{d}_a:= (d_a,
\hd_a(1), \ldots, \hd_a(n-1))$]
\begin{eqnarray*}
\PP \bigl( T_a(\alpha_n) - \bT_a(1)\geq
t, R_a \bigr) &\leq& \EE \biggl[ \EE \biggl[ \ind(R_a)
\prod_{1 \leq k\leq K_n}e^{\theta(\bT
_a(k+1) -
\bT_a(k))}\Big\mid\mathbf{d}_a
\biggr] \biggr]e^{-\theta t}
\\
&=& \EE \biggl[ \prod_{1 \leq k\leq K_n}e^{\theta\sum
_{i=1}^{|L_a(k)|}Y_{k,i}}\PP (
R_a\mid\mathbf{d}_a ) \biggr]e^{-\theta t}
\\
&\leq&\prod_{1 \leq k\leq
\alpha_n} \biggl(1+\frac{\theta}{(\dmin+k)(1-\upi_1^{(n)})-\theta}
\biggr)e^{-\theta t},
\end{eqnarray*}
where in the last inequality, we used the fact that the
probability for a new node to have forward-degree one is at most
$\upi_1^{(n)}$, and so the length $|L_a(k)|$ is dominated by a
geometric random variable with parameter $\upi_1^{(n)}$.
Taking $\theta= \dmin(1-\upi_1^{(n)})$ in the above inequality, we get
\begin{eqnarray*}
\PP \bigl( T_a(\alpha_n) -\bT_a(1) \geq
t, R_a \bigr) &\leq& \prod_{1 \leq k\leq\alpha_n}
\biggl(1 + \frac{\dmin
(1-\upi_1^{(n)})}{(1-\upi_1^{(n)})k} \biggr)e^{-\dmin(1-\upi_1^{(n)})
t}
\\
&=& \prod_{1 \leq k\leq\alpha_n} (k+\dmin)/k e^{-\dmin(1-\upi
_1^{(n)}) t}
\\
&<& \alpha_n^3e^{-\dmin(1-\upi_1^{(n)}) t}. 
\end{eqnarray*}
In the same way, we can easily deduce that
\[
\bigl( \bT_a(1) \mid R_a \bigr) \leq_{\mathrm{st}}
\operatorname{Exp}\bigl(\dmin \bigl(1-\upi_1^{(n)}\bigr)\bigr).
\]

Let $t=x \log n + \ell$, and note that $\ell\leq C\log n$ for some
large constant $C>0$ { [by assumption $\ell= O(\log n)$]}. Take any
$0 < \varepsilon< x(1-q_1)(C+x)^{-1}$; since for $n$ sufficiently large,
we have $\upi_1^{(n)} \leq q_1+\varepsilon$, we obtain
\[
\PP \bigl( T_a(\alpha_n)\geq x\log n +\ell,
R_a \bigr) \leq \frac{\alpha_n^3}{n^{\dmin(x(1-q_1-\varepsilon)-\varepsilon C)}}e^{-\dmin
(1-q_1)\ell},
\]
and (\ref{eq:Ra}) follows. Note that $x(1-q_1-\varepsilon)-\varepsilon C >0$
by the choice of $\varepsilon$.

 Assume now that the event $R'_a\cap R_a^c$ holds. Two cases
can happen: either $I_a<\alpha_n$ or $I_a \geq\alpha_n$.

[Note that in the case $\dmin\geq3$, by Lemma~\ref{lem-R1}
we have $\PP(I_a \geq\alpha_n ) \geq1 - o(n^{-3/2})$. Indeed, for
$\dmin\geq3$, we have $\gamma_a(k)=k$ so that
$ R'_a \subseteq\{I_a \geq\alpha_n\} = \{S_a(k) \geq1, \mbox{ for
all $0\leq k \leq\alpha_n-1$}\}$.]

 If $I_a < \alpha_n$, then by the definition of $R'_a$,
$0=S_{a}(I_a)\geq\gamma_{a}(I_a)$, that is, $\gamma_a(I_a) =0$. In other
words, the component
of $a$ is a union of cycles (or loops) having node $a$ as a common
node, and with total number of edges less than $\alpha_n$. Hence, in
this case, we have
\begin{eqnarray*}
&&\PP \bigl( R'_a, R_a^c,
I_a <\alpha_n, T_a(I_a)
\geq x\log n + \ell \bigr)
\\
&&\qquad \leq\PP \bigl( R_a^c \mid\mathbf{d}_a
\bigr) \biggl(\sum_{0\leq k\leq
\alpha_n} \bigl(\upi_1^{(n)}
\bigr)^k\int_{x \log n + \ell}^\infty t^k
\frac
{e^{-t}}{k!}\,dt \biggr)
\\
&&\qquad\leq \log^{10} n /n  \bigl(1-\upi_1^{(n)}
\bigr)^{-1}\exp \bigl(-\bigl(1-\upi _1^{(n)}
\bigr) (x \log n + \ell) \bigr) = o\bigl(n^{-1}\bigr),
\end{eqnarray*}
where the last inequality follows from inequality~(\ref{pr-R1}) in
Lemma~\ref{lem-R1}.

 In the second case, when $I_a\geq\alpha_n$, let
\[
\mathcal Q = R'_a\cap R_a^c
\cap \{ I_a\geq\alpha_n \}.
\]
If $\mathcal Q$ holds, by the definition of $R'_a$, we have $S_a(j)\geq
k$ for all $j \in L_a(k)$. { Thus
\[
T_a(j+1)-T_a(j) \leq_{\mathrm{st}}
Y_{k,i} \sim\operatorname{Exp}(k),
\]
where $i = j - n_a(k) + 1$, and all the $Y_{k,i}$'s are independent.
[For $i =1, \ldots,\break |L_a(k)|$, $Y_{k,i}$ are exponential random
variables with rate $k$.]}
Hence, by the same argument as above, we have
\begin{eqnarray*}
\PP \bigl( T_a(\alpha_n)-\bT_a(2)\geq t,
\mathcal Q \bigr) &\leq& \EE \biggl[ \EE \biggl[ \ind(\mathcal Q)\prod
_{2\leq k\leq K_n}e^{\theta
(\bT_a(k+1) -
\bT_a(k))}\Big\mid\mathbf{d}_a \biggr]
\biggr]e^{-\theta t}
\\
&\leq&\EE \biggl[ \prod_{2\leq k\leq K_n}e^{\theta
\sum_{i=1}^{|L_a(k)|}Y_{k,i}}\PP
\bigl( R_a^c\mid \mathbf{d}_a \bigr)
\biggr]e^{-\theta t}
\\
&\leq&\prod_{2\leq k\leq
\alpha_n} \biggl(1+\frac{\theta}{k(1-\upi_1^{(n)})-\theta}
\biggr)e^{-\theta
t}o \biggl( \frac{\log^{10}n}{n} \biggr),
\end{eqnarray*}
where the last inequality follows from inequality~(\ref{pr-R1}) in
Lemma~\ref{lem-R1}.
Thus taking $\theta= 1-\upi_1^{(n)}$ gives
\begin{eqnarray*}
\PP \bigl( T_a(\alpha_n)-\bT_a(2)\geq t,
\mathcal Q \bigr) &\leq&\prod_{2\leq k\leq
\alpha_n} \biggl(1+
\frac{1}{k-1} \biggr)e^{-(1-\upi_1^{(n)})
t}o \biggl( \frac{\log^{10}n}{n} \biggr)
\\
&\leq&\alpha_n e^{-(1-\upi_1^{(n)})t}o \biggl( \frac{\log^{10}n}{n} \biggr) =
e^{-(1-\upi_1^{(n)})t}o \biggl( \frac{\log^{13}n}{n} \biggr).
\end{eqnarray*}

 Since $d_a\geq\dmin$, we can easily deduce that
\[
\bigl( \bT_a(2)\mid\mathcal Q \bigr) \leq_{\mathrm{st}}
\operatorname{Exp} \bigl(\dmin\bigl(1-\upi_1^{(n)}\bigr)\bigr) +
\operatorname{Exp}\bigl(1-\upi_1^{(n)}\bigr),
\]
with these two
exponentials being independent and independent of $\mathcal Q$. Hence
we have
\begin{eqnarray*}
\PP \bigl( \bT_a(2)\geq t\mid\mathcal Q \bigr)&\leq& \int
_{t}^\infty \dmin\bigl(1-\upi_1^{(n)}
\bigr) \bigl(e^{-(1-\upi_1^{(n)})x}- e^{-\dmin(1-\upi
_1^{(n)})x} \bigr)
\\
&\leq& \dmin e^{-(1-\upi_1^{(n)})t}.
\end{eqnarray*}
Thus
\begin{eqnarray*}
\PP \bigl( T_a(\alpha_n)\geq t, \mathcal Q \bigr) &
\leq&e^{-(1-\upi_1^{(n)})t}o \biggl( \frac{\log^{13}n}{n} \biggr).
\end{eqnarray*}
Similar to the case where $R_a$ holds (by fixing a constant $\varepsilon
$ small enough and using that for $n$ sufficiently large $\upi_1^{(n)}
\leq q_1+\varepsilon$ for $n$ large enough), we get
\begin{eqnarray*}
\PP \bigl( T_a(\alpha_n)\geq x\log n+\ell, \mathcal Q
\bigr) &\leq&o \biggl( \frac{\log^{13}n}{n^{1+(1-q_1-\varepsilon)C}} \biggr)=o\bigl(n^{-1}\bigr).
\end{eqnarray*}

Putting all the above arguments together, and considering the three
disjoint cases $(R'_a)^c$ hold, $R_a$ holds and $R'_a\cap R_a^c $ holds
(in which case either $I_a <\alpha_n$ or $I_a \geq\alpha_n$), we
conclude that
\[
\PP \bigl( T_a(\alpha_n \wedge I_a)\geq
x \log n + \ell \bigr) \leq o\bigl(e^{-\dmin(1-q_1)\ell}\bigr)+o\bigl(n^{-1}
\bigr)+ 1-\PP\bigl(R'_a\bigr).
\]
To complete the proof it suffices to use Lemma~\ref{lem-R1}.
\end{pf}

We can now finish the proof of the upper bound in the case $\dmin\geq
2$. By Proposition~\ref{lem-up-rq}, and Lemma~\ref{lem-up-bis2} applied
to $\ell= \frac{\log n}{\dmin(1-q_1)}$, we obtain that
for a uniformly chosen vertex $a$ and any $\varepsilon>0$, we have
%
\begin{eqnarray}
\label{prop:upT22}&& \PP \biggl( \infty> T_a(\beta_n \wedge
I_a) \geq \biggl(\frac{1}{2(\nu
-1)} + \frac{1}{\dmin(1-q_1)} \biggr)
(1+\varepsilon)\log n \biggr)
\nonumber
\\[-8pt]
\\[-8pt]
\nonumber
&&\qquad= o\bigl(n^{-1}\bigr).
\end{eqnarray}
Indeed, the above probability can be bounded above by
\begin{eqnarray*}
&&\PP \biggl( T_a(\alpha_n \wedge I_a)
\geq \frac{1+\varepsilon}{\dmin(1-q_1)}\log n \biggr)
\\
&&\qquad{}+ \PP \biggl( T_a(\beta _n) - T_a(
\alpha_n) \geq\frac{1+\varepsilon}{2(\nu-1)}\log n \Big\mid I_a \geq
\alpha_n \biggr),
\end{eqnarray*}
and this is $o(n^{-1})$ by the above cited results.

Applying equation (\ref{prop:upT22}) (and Lemma~\ref{lem-up-bis2}) and
a union bound over $a$, we obtain
%
\begin{eqnarray}
\label{eq:flood-diam2} &&\PP \biggl(\forall a, T_a(\beta_n \wedge
I_a) \leq \biggl(\frac{1}{2(\nu-1)}+\frac{1}{\dmin(1-q_1)} \biggr) (1+
\varepsilon)\log n \biggr)
\nonumber
\\[-8pt]
\\[-8pt]
\nonumber
&&\qquad= 1-o(1).
\end{eqnarray}
Hence by Proposition~\ref{prop-up}, we have w.h.p. (for $\dmin\geq2$)
\[
\frac{\diam_w(G(n, (d_i)_1^n))}{\log n}\leq(1+\varepsilon) \biggl(\frac
{1}{\nu-1} +
\frac{1}{1-q_1}\ind[\dmin=2] + \frac{2}{\dmin}\ind[\dmin \geq3] \biggr).
\]

This proves the bound on the diameter. To obtain the upper bound for
the flooding time, we use equation (\ref{eq:flood-diam2}) and proceed
as above by applying Proposition~\ref{lem-up-rq} and Lemma~\ref
{lem-up-bis2} applied to $\ell= \varepsilon\log n$, to obtain that for a
uniformly chosen vertex~$b$, we have
%
\begin{equation}
\label{eq:flood-diam2bis} \PP \biggl(T_b(\beta_n \wedge
I_b) \leq \biggl(\frac{1+\varepsilon}{2(\nu
-1)}+\varepsilon\biggr)\log n
\biggr) = 1-o(1).
\end{equation}
Clearly, equations~(\ref{eq:flood-diam2}) and (\ref{eq:flood-diam2bis})
imply that w.h.p.
\begin{eqnarray*}
&&\frac{\flood_w(a,G(n, (d_i)_1^n))}{\log n}\\
&&\qquad\leq(1+\varepsilon) \biggl(\frac
{1}{\nu-1} +
\frac{1}{2(1-q_1)}\ind[\dmin=2] + \frac{1}{\dmin}\ind [\dmin\geq3] \biggr).
\end{eqnarray*}
Similarly, (\ref{eq:flood-diam2bis}) and Proposition~\ref{lem-up-rq}
give an upper bound for (\ref{th:dist}).

The proof of the upper bound in this case is now complete.

\subsection{Proof of the upper bound in the case $d_{\min}=1$}\label{sec:upp1}

In this section, we will need some results on the $2$-core of the
graph. Basic definitions and needed results are given in Appendix \ref
{sec:fpp-2core}.

We denote by $\cC_a$ the event that $a$ is connected to the $2$-core
of $G_n \sim G(n, (d_i)_1^n)$. It is well known (cf. Appendix~\ref{sec:fpp-2core}) that the condition $\nu>1$ ensures that the 2-core of
$G_n$ has size $\Omega(n)$, w.h.p.
We consider the graph $\tG_n(a)$ obtained from $G_n$ by removing all
vertices of degree one except $a$ until no such vertices exist.
If the event $\cC_a$ holds, $\tG_n(a)$ consists of the 2-core of $G_n$
and the unique path (empty if $a$ belongs to the $2$-core) from $a$ to
the $2$-core. While, if the event $\cC_a^c$ holds, then the graph $\tG
_n(a)$ is the union of the $2$-core of $G_n$ and the isolated vertex $a$.

In order to bound the weighted distance between two vertices $a$ and
$b$, in what follows, we will consider two cases depending on whether
both the vertices $a$ and $b$ are connected to the 2-core (i.e., the
events $\cC_a$ and $\cC_b$ both hold), or both vertices $a$ and $b$
belong to the same tree component of the graph. In the former case, we
will show how to adapt the analysis we made in the case $\dmin\geq2$
to this case. And in the latter case, we directly bound the diameter of
all the tree components of the graph.

First note that $\tG_n(a)$ can be constructed by means of the
configuration model with a new degree sequence $\td$ (cf. Appendix~\ref{sec:fpp-2core}) with $\td_i\geq2$ for all $i\neq a$. Consider the
exploration process on the graph $\tG_n(a)$, and denote by $\tT_a(i)$
the first time the ball $\tB_w(a,t)$ in $\tG_n(a)$ reaches size $i+1$.
Also, $\tilde I_a$ is defined similar to $I_a$ for the graph $\tG
_n(a)$. We need the following lemma.

\begin{lemma}\label{lem-up-bis3}
For a uniformly chosen vertex $a$, any $x>0$ and any $\ell=O(\log n)$,
we have
\[
\PP \bigl( \tT_a(\alpha_n \wedge\tilde
I_a)\geq x \log n + \ell \bigr) \leq o\bigl(n^{-1}\bigr)
+ o\bigl(e^{-(1-\lambda_*)\ell}\bigr).
\]
\end{lemma}

\begin{pf}
First note that if $\cC_a$ does not hold, that is, if $a$ is not
connected to the 2-core, we will have $\tilde{I}_a = 0$ (since
$\td_a=0$), and there is nothing to prove. Now the proof follows the
same lines as in the proof of Lemma~\ref{lem-up-bis2}. Note that
conditional on $\cC_a$, we have $\td_a\geq1$, hence by Lemma~\ref{lem-R1}, we have $\PP(R_a\mid\cC_a, \tilde{\bold{d}})\geq1-o(\log
^{10} n/n)$, and similarly for $R'_a$.
The only difference we have to highlight here, compared to the proof of
Lemma~\ref{lem-up-bis2}, is that conditional on $R_a\cap\cC_a$, we have
$\tilde{S}_a(j)\geq1+k$ for all $j\in\tilde{L}_a(k)$, where $\tilde
{S}_a(j)$ and $ \tilde{L}_a(k)$ are defined in the same way as $S_a(j)$
and $ L_a(k)$ for the graph $\tG(a)$. Take now $\theta=1-\underline
{\tilde{\pi}}_1^{(n)}$ in the Chernoff bound, used in the proof of
Lemma~\ref{lem-up-bis2}, where $\tilde{\upi}^{(n)}$ is defined as $\upi
^{(n)}$ for the degree sequence $(\td^{(n)}_1, \ldots, \td^{(n)}_{\tilde
{n}})$. The rest of the proof of Lemma~\ref{lem-up-bis2} can then be
easily adapted to obtain the same result, provided we replace
$2(1-q_1)$ by $(1-\lambda_*)$, which is precisely the statement of the
current lemma. (Note that $\lambda_* = \tilde q_1$; cf. Appendix~\ref{sec:fpp-2core}.)
\end{pf}

By Proposition~\ref{lem-up-rq} applied to the graph $\tG_n(a)$ (note
that $\tilde{\nu} = \nu$; cf. see Appendix~\ref{sec:fpp-2core}) and
Lemma~\ref{lem-up-bis3} applied to $\ell= \frac{\log n}{1-\lambda_*}$,
we obtain that
for a uniformly chosen vertex $a$ and any $\varepsilon>0$, we have
%
\begin{equation}\qquad
\label{prop:upT3} \PP \biggl( \infty> \tT_a(\beta_n \wedge
\tilde I_a) \geq \biggl(\frac
{1}{2(\nu-1)} + \frac{1}{1-\lambda_*}
\biggr) (1+\varepsilon)\log n \biggr) = o\bigl(n^{-1}\bigr).
\end{equation}
Indeed the above probability can be bounded above by
\begin{eqnarray*}
&&\PP \biggl( \tT_a(\alpha_n \wedge\tilde
I_a) \geq \frac{1+\varepsilon}{1-\lambda_*}\log n \biggr)
\\
&&\qquad{}+ \PP \biggl( \tT _a(\beta_n) - \tT_a(
\alpha_n) \geq\frac{1+\varepsilon}{2(\nu-1)}\log n \Big\mid\tilde I_a
\geq\alpha_n \biggr),
\end{eqnarray*}
and this is $o(n^{-1})$ by the above cited results.

Applying equation~(\ref{prop:upT3}) (and Lemma~\ref{lem-up-bis3}) and a
union bound over $a$, we obtain
%
\begin{equation}\qquad
\label{eq:flood-diam3} \PP \biggl(\forall a, \tT_a(\beta_n
\wedge\tilde I_a) \leq \biggl(\frac{1}{2(\nu-1)}+\frac{1}{1-\lambda_*}
\biggr) (1+\varepsilon)\log n \biggr) = 1-o(1).
\end{equation}

To obtain the upper bound for the flooding time and the typical
distance, we use equation (\ref{eq:flood-diam3}) and proceed as above
by using Lemma~\ref{lem-up-bis3} applied to $\ell= \varepsilon\log n$,
to obtain that for a uniformly chosen vertex $b$, we have
%
\begin{equation}
\label{eq:flood-diam3bis} \PP \biggl(\tT_b(\beta_n \wedge \tilde
I_b) \leq \biggl(\frac{1+\varepsilon}{2(\nu-1)}+\varepsilon \biggr)\log n
\biggr) = 1-o(1).
\end{equation}

Clearly, equation (\ref{eq:flood-diam3}) together with Proposition~\ref
{prop-up} [since $\tT_a(k)\geq T_a(k)$ for all $k$], imply the desired
upper bound on the giant component of $G_n$ and also on every
component containing a cycle, that is, connected to $2$-core.

At this point, we are only left to bound the (weighted) diameter of the
tree components.
In particular, the following lemma completes the proof.

\begin{lemma}
For two uniformly chosen vertices $a, b$ and any $\varepsilon>0$, we have
\[
\PP \biggl( \frac{1+\varepsilon}{1-\lambda_*} \log n < \dist_w(a, b) < \infty, \cC_a^c, \cC_b^c \biggr) = o
\bigl(n^{-2}\bigr).
\]
\end{lemma}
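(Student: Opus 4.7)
On the event $\{\dist_w(a,b)<\infty\}\cap\cC_a^c\cap\cC_b^c$, both $a$ and $b$ lie in the same finite tree component $T(a)$ of $G_n$, and $\dist_w(a,b)$ equals the sum of i.i.d.\ $\mathrm{Exp}(1)$ weights along the unique $a$--$b$ path in $T(a)$. Set $t_n:=(1+\epsilon)\log n/(1-\lambda_*)$ and
\[ N_a\,:=\,\#\{u\in T(a):\dist_w(a,u)>t_n\}. \]
Since $b$ is uniform on $V$ and independent of the graph and its weights, exchangeability in $b$ yields
\[ \PP\bigl(\dist_w(a,b)>t_n,\,\cC_a^c,\,\cC_b^c,\,\dist_w(a,b)<\infty\bigr)\;\leq\;\tfrac{1}{n}\,\EE\bigl[N_a\ind(\cC_a^c)\bigr], \]
so it suffices to prove $\EE[N_a\ind(\cC_a^c)]=O(n^{-(1+\epsilon)})$.

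Let $V_k$ be the number of vertices of $T(a)$ at graph distance exactly $k$ from $a$. Because the $\mathrm{Exp}(1)$ weights are independent of the graph topology, the total weight of any length-$k$ path is $\mathrm{Gamma}(k,1)$-distributed independently of the tree, so
\[ \EE\bigl[N_a\ind(\cC_a^c)\bigr]\;=\;\sum_{k\geq 1}\EE\bigl[V_k\ind(\cC_a^c)\bigr]\,\PP\bigl(\mathrm{Gamma}(k,1)>t_n\bigr). \]
The crucial quantitative input I would prove is
\[ \EE\bigl[V_k\ind(\cC_a^c)\bigr]\;\leq\;C_1\,\lambda_*^{k-1}\qquad\text{for all }k\geq 1, \]
with $C_1$ independent of $n$. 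This reflects the classical fact that a Galton--Watson tree with offspring pgf $\phi_q$ (of supercritical mean $\nu>1$), conditioned on extinction, is a Galton--Watson tree with pgf $s\mapsto\phi_q(\lambda s)/\lambda$ and mean $\phi_q'(\lambda)=\lambda_*<1$: on $\cC_a^c$ every forward half-edge at depth $k$ must spawn only finite subtrees, paying a factor $\lambda$ per additional half-edge and converting the generation-$k$ mean $\nu^k$ into the subcritical $\lambda_*^k$. For the configuration model I would use Lemma~\ref{lem-coupl} to compare the forward-degree sequence with i.i.d.\ draws from $\bpi^{(n)}\to q$ for $k\leq\alpha_n$, and dispose of the tail $k>\alpha_n$ by noting that a subcritical Galton--Watson tree has size exceeding $\alpha_n=\log^3 n$ only with super-polynomially small probability.

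The finishing computation is the Poisson--geometric identity
\[ \sum_{k\geq 1}\lambda_*^{k-1}\,\PP\bigl(\mathrm{Gamma}(k,1)>t\bigr)\;=\;\frac{e^{-t}}{1-\lambda_*}\sum_{j\geq 0}\frac{(\lambda_* t)^j}{j!}\;=\;\frac{e^{-(1-\lambda_*)t}}{1-\lambda_*}, \]
obtained by swapping the order of summation using $\PP(\mathrm{Gamma}(k,1)>t)=e^{-t}\sum_{j=0}^{k-1}t^j/j!$. Substituting $t=t_n$ yields $n^{-(1+\epsilon)}/(1-\lambda_*)$, so $\EE[N_a\ind(\cC_a^c)]=O(n^{-(1+\epsilon)})$ and the target bound $O(n^{-(2+\epsilon)})=o(n^{-2})$ follows. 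The main obstacle will be the rigorous proof of $\EE[V_k\ind(\cC_a^c)]\leq C_1\lambda_*^{k-1}$: Lemma~\ref{lem-coupl} in isolation only gives the supercritical bound $\EE[V_k]=O(\bnu^k)$, so one must genuinely exploit the extinction indicator --- for example via a $\lambda$-tilt of the size-biased forward-degree distribution that renders the exploration subcritical with mean $\lambda_*$ --- to recover the decay $\lambda_*^k$ needed for the Gamma-tail sum to converge at the right rate.
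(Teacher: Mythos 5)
Your reduction is sound as far as it goes: on the event in question the component of $a$ is a tree, averaging over the uniform vertex $b$ produces the factor $N_a/n$, the decomposition of $N_a$ over graph distance $k$ with the $\mathrm{Gamma}(k,1)$ law of path weights is legitimate, and the closing identity $\sum_{k\geq 1}\lambda_*^{k-1}\PP(\mathrm{Gamma}(k,1)>t)=e^{-(1-\lambda_*)t}/(1-\lambda_*)$ is correct. But the proof has a genuine gap exactly where you flag it, and that gap is the whole content of the lemma: the estimate $\EE[V_k\ind(\cC_a^c)]\leq C_1\lambda_*^{k-1}$ is asserted, not proved. In the finite configuration model $\cC_a^c$ is a global event (the exploration from $a$ must terminate without ever closing a cycle), the forward degrees are neither i.i.d.\ nor independent of this event, and Lemma~\ref{lem-coupl} controls only the unconditioned forward degrees, giving the useless supercritical bound of order $\nu^k$. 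Making the ``$\lambda$-tilt'' rigorous means showing that conditioning on extinction-without-cycles multiplies the law of each exploration step by $\lambda+o(1)$ per additional open half-edge, uniformly over the first $\alpha_n$ generations and with errors compatible with a bound that must hold up to a constant for every $k$ and $n$; that is an argument of comparable weight to the rest of the section and it is absent. The treatment of the tail $k>\alpha_n$ is moreover circular as written: you appeal to the super-polynomially small tail of ``a subcritical Galton--Watson tree,'' but the component of $a$ is only effectively subcritical after the very conditioning whose effect you are trying to quantify; what is actually needed is a separate proof that, say, $\PP\bigl(|T(a)|\geq\alpha_n,\ \cC_a^c\bigr)$ is super-polynomially small (doable with Lemma~\ref{lem-R}-type growth bounds plus a large-deviation estimate for the death of a process with $\Omega(\alpha_n)$ open half-edges, but not supplied). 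A minor further point: at finite $n$ you can only hope for $\lambda_*+o(1)$ in place of $\lambda_*$; this is harmless, since the $\epsilon$ in $t_n$ absorbs it, exactly as the paper absorbs $\upi_1^{(n)}\leq q_1+\epsilon$.

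For comparison, the paper avoids conditioning on extinction altogether. On the event $\cC_a^c\cap\cC_b^c\cap\{\dist_w(a,b)<\infty\}$ the common component is a tree, so the $\{a,b\}$-augmented $2$-core $\tG_n(a,b)$ reduces it to the bare $a$--$b$ path with $\td_a=\td_b=1$; the exploration argument of Lemma~\ref{lem-up-bis3} is then run in $\tG_n(a,b)$, where $\lambda_*$ enters not through a tilt but as the limit of the size-biased frequency $\tilde q_1$ of degree-two core vertices (Equation~(\ref{eq-aug-q1}), Appendix~\ref{sec:fpp-2core}), yielding the $o(n^{-1})$ time factor from already-proved machinery; the extra $O(n^{-1})$ comes from the probability that the exploration terminates precisely at the unique half-edge of $b$. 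If you wish to keep your route, you must supply the conditioned first-moment estimate and the non-circular tail bound; otherwise the $2$-core reduction lets you reuse the existing lemmas and is the shorter path.
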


\begin{pf}
We consider the graph $\tG_n(a,b)$ obtained from $G_n$ by removing vertices
of degree less than two, except $a$ and $b$, until no such vertices exist.
As shown in Appendix~\ref{sec:fpp-2core}, the random graph $\tG_n(a,b)$
can be still obtained by a configuration model, and has the same
asymptotic parameters as the random graph $\tG_n(a)$ in the proof of
the previous lemma. We denote again by $\td$, the degree sequence of
the random graph $\tG_n(a,b)$. Also, $\tilde{T}_a$ and $\tilde I_a$ are
defined similarly for the graph $\tG_n(a,b)$.

Trivially, we can assume $\td_a = 1$ and $\td_b = 1$. Otherwise, either
they are not in the same component, and so $\dist_w(a,b) =\infty$, or
one of them is in the $2$-core; that is, one of the two events $\cC_a$
or $\cC_b$ holds.
Consider now the exploration process started at $a$ until time $k^*$
which is
the first time either a node with forward-degree (strictly) larger than
one appears or
the time that the unique half-edge adjacent to $b$ is chosen by the
process. Let $v^*$ be the node chosen at $k^*$.
{ Note that $\td_{v^*} = 1$ if and only if the half-edge incident to
$b$ is chosen at $k^*$.}
We have
\begin{eqnarray*}
&&\PP \biggl( \frac{1+\varepsilon}{1-\lambda_*} \log n < \dist_w(a, b)
< \infty,
\cC_a^c, \cC_b^c \biggr)
\\
&&\qquad= \PP \biggl( \tT_a\bigl(k^*\bigr) > \frac{1+\varepsilon}{1-\lambda_*} \log n,
v^* = b, \td_a = \td_b =1 \biggr)
\\
&&\qquad\leq\PP \biggl( \tT_a\bigl(k^*\bigr) > \frac{1+\varepsilon}{1-\lambda_*} \log n,
v^* = b \Big\mid \td_a = \td_b =1 \biggr)
\\
&&\qquad= \PP \biggl( \tT_a\bigl(k^*\bigr) > \frac{1+\varepsilon}{1-\lambda_*} \log n
\Big\mid \td_a = \td_b =1 \biggr)
\\
&&\qquad\quad{}\times\PP ( \td_{v^*} = 1 \mid\td_{v^*} \neq2,
\td_a = \td _b =1 ) = o\bigl(n^{-2}\bigr).
\end{eqnarray*}
To prove\vspace*{1pt} the last equality above, first note $\PP(\td_{v^*} = 1 \mid\td
_{v^*} \neq2, \td_a = \td_b =1 ) =O(\frac{1}n)$, this holds since $\nu
= \tilde\nu>1$ and $v^*$ will be chosen before $o(n)$ steps, that is,
$k^* =o(n)$ [we will indeed prove something much stronger, that $k^* =
O(\log n)$; cf. Lemma~\ref{lem-bound-clusters} in the next section].
Second, note that $ \PP ( \tT_a(k^*) > \frac{1+\varepsilon}{1-\lambda
_*} \log n \mid \td_a = \td_b =1  )=o(1/n)$. This follows by the
same argument as in the proof of Lemma~\ref{lem-up-bis3} applied to $\tG
_{n}(a,b)$, and by setting $\ell= \frac{(1+\varepsilon)\log n}{1-\lambda_*}$.
\end{pf}

The proof of the upper bound in this case is now complete by taking a
union bound over all $a$ and $b$.
We end this section by presenting the proof of Propositions~\ref
{prop-up} and~\ref{lem-up-rq} in the next subsection.

\subsection{Proof of Propositions~\texorpdfstring{\protect\ref{prop-up}}{3.1} and~\texorpdfstring{\protect\ref{lem-up-rq}}{3.2}}
We start this section by giving some preliminary results that we will
need in the proof of Propositions~\ref{prop-up} and~\ref{lem-up-rq}.

\begin{lemma}\label{lem-upp}
Let $\uD^{(n)}_i$ be i.i.d. with distribution $\upi^{(n)}$. For any
$\eta< \nu$,
there is a constant $\gamma> 0$ such that for $n$ large enough, we have
%
\begin{equation}
\label{eq-upp} \PP \bigl(\uD^{(n)}_1+\cdots+
\uD^{(n)}_k\leq k \eta \bigr)\leq e^{-\gamma k}.
\end{equation}
\end{lemma}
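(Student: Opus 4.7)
The plan is a standard Chernoff-type exponential bound. For any $\theta>0$, applying Markov's inequality to $\exp(-\theta\sum_i \uD_i^{(n)})$ gives
\[
\PP\Bigl(\sum_{i=1}^k \uD_i^{(n)} \leq k\eta\Bigr) \leq e^{\theta k\eta}\, M_n(\theta)^k, \qquad M_n(\theta) := \EE\bigl[e^{-\theta \uD_1^{(n)}}\bigr] = \sum_{j \geq 0} e^{-\theta j}\, \upi^{(n)}_j,
\]
so it suffices to exhibit a single $\theta_0>0$ and a constant $\gamma>0$ with $\theta_0\eta + \log M_n(\theta_0) \leq -\gamma$ for all $n$ large enough; the stated inequality then follows with this $\gamma$.

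I would first analyze the limiting moment generating function $M(\theta) := \sum_{j\geq 0} e^{-\theta j} q_j = \phi_q(e^{-\theta})$, which is smooth on $(0,\infty)$ with $M(0)=1$ and (by termwise differentiation, justified since $\nu<\infty$) $M'(0^+) = -\nu$. Consequently $\psi(\theta) := \theta\eta + \log M(\theta)$ satisfies $\psi(0)=0$ and $\psi'(0^+) = \eta - \nu < 0$, so I can fix $\theta_0>0$ small enough that $\psi(\theta_0) = -2\gamma$ for some $\gamma>0$.

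It remains to transfer this inequality from $M$ to $M_n$ at the fixed point $\theta_0$. The pointwise convergence $\upi^{(n)}_j \to q_j$ for each $j$ is a direct consequence of Condition~\ref{cond-dil}(i) together with $\beta_n\Delta_n = o(n)$, which as noted in the text ensures that removing the top $\beta_n$ degrees perturbs any fixed coordinate of the size-biased empirical distribution only by $o(1)$. Splitting $|M_n(\theta_0)-M(\theta_0)|$ into a head $j\leq K$ (vanishing for each fixed $K$ by pointwise convergence) and a tail $j>K$ (uniformly bounded by $2\sum_{j>K} e^{-\theta_0 j}$, which is arbitrarily small for $K$ large since both $\upi^{(n)}$ and $q$ are probability weights) yields $M_n(\theta_0)\to M(\theta_0)$. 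Therefore $\theta_0\eta + \log M_n(\theta_0) \leq -\gamma$ for $n$ large, and the Chernoff bound closes the argument.

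The only mildly subtle point, and the single place where the $n$-dependence of $\upi^{(n)}$ needs care, is the tail control at $\theta_0$; this is immediate from $\theta_0>0$ via a dominated-convergence argument, since no moment assumption on $\upi^{(n)}$ beyond being a probability distribution is required once $\theta_0>0$ is fixed.
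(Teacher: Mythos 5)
Your proposal is correct and follows essentially the same route as the paper: a Chernoff bound on the negative exponential moment, choosing a small fixed $\theta_0>0$ using that the limiting transform $\phi_q(e^{-\theta})$ has derivative $-\nu$ at $\theta=0$ (so $\theta\eta+\log M(\theta)<0$ for small $\theta$ when $\eta<\nu$), and then transferring to $M_n$ via the convergence $\upi^{(n)}\to q$. The only difference is cosmetic: the paper simply cites the convergence $\uphi^{(n)}(\theta)\to\phi(\theta)$ (stated earlier from $\beta_n\Delta_n=o(n)$), whereas you spell it out with a head/tail truncation, which is a harmless elaboration of the same step.
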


\begin{pf}
Let $D^*$ be a random variable with distribution $\PP(D^* = k) =
q_k$ given in equation~(\ref{eq:defq}) so that $\EE[D^*]=\nu$.
Let $\phi(\theta) = \EE[e^{-\theta D^*}]$. For any
$\varepsilon>0$, there exists $\theta_0>0$ such that for any $\theta\in
(0,\theta_0)$, we have
$\log\phi(\theta)< (- \nu+ \varepsilon)\theta$.
By Condition \ref{cond-dil} and the fact that $\beta_n \Delta_n =
o(n)$, that is, $\sum_{i=n-\beta_n+1}^n d_{(i)}^{(n)} = o(n)$, we have for
any $\theta>0$,
$\lim_{n \rightarrow\infty} \uphi^{(n)}(\theta) = \phi(\theta) $,
where $\uphi^{(n)}(\theta)=\break \EE[e^{-\theta\uD^{(n)}_1}]$. Also, for
$\theta>0$,
\[
\PP \bigl(\uD^{(n)}_1+\cdots+\uD^{(n)}_k
\leq\eta k \bigr) \leq \exp \bigl(k \bigl(\theta\eta+\log \uphi^{(n)}(
\theta) \bigr) \bigr).
\]
Fix $\theta<\theta_0$, and let $n$ be sufficiently large so that
$\log\uphi^{(n)}(\theta)\leq\log\phi(\theta)+\varepsilon$. This yields
\begin{eqnarray*}
\PP \bigl(\uD^{(n)}_1+\cdots+\uD^{(n)}_k
\leq\eta k \bigr) &\leq& \exp \bigl(k \bigl(\theta\eta+\log \phi(\theta) +
\varepsilon\theta \bigr) \bigr)
\\
&\leq& \exp \bigl(k\theta (\eta-\nu+2 \varepsilon ) \bigr),
\end{eqnarray*}
which completes the proof.\vadjust{\goodbreak}
\end{pf}

The following lemma is the main step in the proof of both the propositions.

\begin{lemma}\label{lem-R}
For any $\varepsilon>0$, define the event
\[
R''_a:= \biggl\{ S_a(k)
\geq\frac{\nu-1}{1+\varepsilon}k,\mbox{ for all } \alpha_n\leq k\leq
\beta_n \biggr\}.
\]
For a uniformly chosen vertex $a$, we have $\PP ( R''_a \mid I_a
\geq\alpha_n  )\geq
1-o(n^{-5})$.
\end{lemma}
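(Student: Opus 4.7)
The plan is to control the two pieces of the decomposition $S_a(k) = \hat S_a(k) - 2 X_a(k)$ (Equation (\ref{eq:S(a)})) separately over the range $k \in [\alpha_n,\beta_n]$, and then close the argument by a short induction that uses the conditioning $I_a \geq \alpha_n$. Fix small constants $\epsilon_1, \epsilon_2 > 0$ such that $\nu - 1 - \epsilon_1 - 2\epsilon_2 \geq (\nu-1)/(1+\epsilon)$, which is possible since $\nu > 1$.

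First, I would prove a lower bound on $\hat S_a$. The lower coupling of Lemma \ref{lem-coupl} gives $\sum_{i=1}^k \hat d_a(i) \geq_{st} \sum_{i=1}^k \uD_i^{(n)}$ for $k \leq \beta_n$, and Lemma \ref{lem-upp} with $\eta = \nu - \epsilon_1$ yields $\PP(\sum_{i=1}^k \uD_i^{(n)} \leq (\nu-\epsilon_1)k) \leq e^{-\gamma k}$ for some $\gamma > 0$. Hence $\hat S_a(k) \geq d_a + (\nu - 1 - \epsilon_1)k$ with probability at least $1 - e^{-\gamma k}$, and a union bound over $k \in [\alpha_n,\beta_n]$ gives failure probability at most $\beta_n e^{-\gamma \alpha_n} = o(n^{-K})$ for any $K$, since $\alpha_n = \log^3 n$.

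Next, I would control $X_a(k)$. The upper coupling $\hat d_a(i) \leq_{st} \bD_i^{(n)}$ combined with the upper-tail version of Lemma \ref{lem-upp} (same Chernoff argument) shows that with probability $1 - o(n^{-K})$, uniformly in $k \in [\alpha_n,\beta_n]$,
\[
\max_{\ell \leq k} \hat S_a(\ell) \;\leq\; d_a + \sum_{i=1}^k \hat d_a(i) \;\leq\; \Delta_n + 2\nu k,
\]
using monotonicity of $\sum \bD_i^{(n)}$. On this event, Lemma \ref{lem-upinc11} dominates $X_a(k)$ by $\Bin\bigl(O(\Delta_n+k),\, O((\Delta_n+k)/n)\bigr)$, whose mean is $O((\Delta_n+k)^2/n) = O(\log n)$ (using $\Delta_n = O(n^{1/2-\delta})$ from Condition \ref{cond-dil}(ii) and $\beta_n^2/n = O(\log n)$). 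Since $\epsilon_2 k \geq \epsilon_2 \log^3 n$ dominates this mean, Chernoff gives $\PP(X_a(k) \geq \epsilon_2 k) = o(n^{-K})$, and a union bound over $k$ preserves this.

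Finally, I would run a one-step induction. Conditional on $I_a \geq \alpha_n$ we know $S_a(k) > 0$ for $0 \leq k \leq \alpha_n - 1$. On the intersection of the two high-probability events above, $\hat S_a(k) - 2X_a(k) \geq (\nu-1-\epsilon_1-2\epsilon_2)k > 0$ for every $k \in [\alpha_n,\beta_n]$; assuming inductively $I_a \geq k$, the exact identity (\ref{eq:S(a)}) gives $S_a(k) = \hat S_a(k) - 2X_a(k) \geq (\nu-1)k/(1+\epsilon) > 0$, which propagates $I_a \geq k+1$. This shows $R''_a$ holds on the intersection, and the conclusion follows upon dividing by $\PP(I_a \geq \alpha_n) = \Theta(1)$ (which is positive because the giant component contains a positive fraction of the vertices when $\nu > 1$). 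The main obstacle is the presence of $d_a$, which can be as large as $\Delta_n$, in the bound on $\max_\ell \hat S_a(\ell)$; this is absorbed cleanly because $\Delta_n^2/n = O(n^{-2\delta})$, so the starting degree does not spoil the binomial mean estimate in the control of $X_a$.
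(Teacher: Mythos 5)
Your first step (the lower bound on $\hS_a(k)$ via Lemma~\ref{lem-coupl} and Lemma~\ref{lem-upp}, with a union bound over $k\in[\alpha_n,\beta_n]$) and your final chaining argument propagating $I_a\geq k$ are in line with the paper's proof. The genuine gap is in your control of $X_a(k)$: you invoke an ``upper-tail version of Lemma~\ref{lem-upp}'' to claim $\max_{\ell\leq k}\hS_a(\ell)\leq \Delta_n+2\nu k$ with probability $1-o(n^{-K})$. No such bound is available under Condition~\ref{cond-dil}. Lemma~\ref{lem-upp} is a lower-tail estimate and works for any distribution because $\EE[e^{-\theta \uD^{(n)}_1}]$ always exists; an upper-tail Chernoff bound would require an exponential moment for $\bpi^{(n)}$, whereas Condition~\ref{cond-dil}(ii) only gives $\sum_i d_i^{2+\epsilon}=O(n)$, hence only a polynomial tail $\PP(\bD^{(n)}_1\geq t)=O(t^{-(1+\epsilon)})$ for the size-biased forward degree. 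For admissible heavy-tailed degree sequences the probability that $\sum_{i\leq k}\hd_a(i)$ exceeds $2\nu k$ at $k=\alpha_n=\log^3 n$ is only polylogarithmically small, nowhere near $o(n^{-5})$, so your high-probability event simply does not hold with the error probability the lemma requires. And you cannot fall back on the deterministic bound $\max_{\ell\leq k}\hS_a(\ell)\leq (k+1)\Delta_n$ inside Lemma~\ref{lem-upinc11}, since then the dominating binomial has mean of order $k^2\Delta_n^2/n$, which for $k$ near $\beta_n$ and $\Delta_n=n^{1/2-\delta}$ is polynomially large and useless.

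This is precisely the difficulty the paper's proof is built to avoid: instead of bounding $X_a(k)$ in isolation, it bounds $S_a(k)$ directly through Lemma~\ref{lem:upX}, which dominates $2X_a(k)$ by $\Bin\bigl(\hS_a(k),\sqrt{\hS_a(k)/n}\bigr)$, and then uses the monotonicity statement of Corollary~\ref{cor-bin} (that $x-\Bin(x,\sqrt{x/n})$ is stochastically increasing for $x=o(n)$) to replace $\hS_a(k)$ by its \emph{lower} bound $\frac{\nu-1}{1+\epsilon/2}k$; the only upper information needed on $\hS_a(k)$ is the crude deterministic $(k+1)\Delta_n=o(n)$. A Chernoff bound on $\Bin(\nu k,\sqrt{\nu k/n})$ then finishes each $k$, conditionally on $I_a\geq k$. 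If you want to salvage your decomposition, you should replace your upper-tail step by this monotonicity device (or otherwise bound $2X_a(k)$ without any probabilistic upper bound on $\hS_a(k)$); as written, the argument fails under the paper's moment assumptions.
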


 Before giving the proof of this lemma, we recall the
following basic result and one immediate corollary (for the proof see,
e.g.,~\cite{KM2010}, Theorem~1):

\begin{lemma}\label{lem-bin}
Let $n_1, n_2 \in\NN$ and $p_1, p_2 \in(0,1)$. We have $\Bin(n_1,p_1)
\leq_{\mathrm{st}} \Bin(n_2,p_2)$ if and only if the following conditions hold:
\begin{longlist}[(ii)]
\item[(i)] $n_1 \leq n_2$;
\item[(ii)] $(1-p_1)^{n_1} \geq(1-p_2)^{n_2}$.
\end{longlist}
\end{lemma}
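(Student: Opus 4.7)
The forward direction is direct. From the stochastic inequality $\PP(\Bin(n_1, p_1) > k) \leq \PP(\Bin(n_2, p_2) > k)$, specializing to $k = n_2$ forces $\PP(\Bin(n_1, p_1) > n_2) = 0$, which together with $p_1 \in (0,1)$ requires $n_1 \leq n_2$; specializing to $k = 0$ rearranges to $(1-p_1)^{n_1} \geq (1-p_2)^{n_2}$.

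For sufficiency, my plan is to build an explicit monotone coupling. Sample $U_1, \dots, U_{n_2}$ i.i.d.\ $\mathrm{Uniform}(0,1)$ and set $Y := \sum_{j=1}^{n_2} \ind[U_j \leq p_2]$, so $Y \sim \Bin(n_2, p_2)$. Partition $\{1, \dots, n_2\}$ into $n_1$ disjoint non-empty blocks $G_1, \dots, G_{n_1}$ of sizes $k_1, \dots, k_{n_1}$ summing to $n_2$ (possible since $n_1 \leq n_2$), and let $V_i := \min_{j \in G_i} U_j$; the $V_i$ are independent with $\PP(V_i \leq v) = 1 - (1-v)^{k_i}$. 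Choosing $q_i := 1 - (1-p_1)^{1/k_i}$, the indicators $X_i := \ind[V_i \leq q_i]$ are independent $\Ber(p_1)$, so $X := \sum_i X_i \sim \Bin(n_1, p_1)$; and the pointwise bound $X_i \leq \ind[V_i \leq p_2]$, which sums to $X \leq Y$ since the number of blocks hit is at most the number of successes, holds whenever $q_i \leq p_2$, equivalently $(1-p_2)^{k_i} \leq 1 - p_1$.

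When $n_1$ divides $n_2$, the equal partition $k_i \equiv n_2/n_1$ reduces the per-block inequality to the global hypothesis exactly, and the coupling is complete. For arbitrary $n_1 \leq n_2$ I would first interpose $p^\star := 1 - (1-p_2)^{n_2/n_1}$ (with a real exponent): the hypothesis rewrites as $p_1 \leq p^\star$, so $\Bin(n_1, p_1) \leq_{st} \Bin(n_1, p^\star)$ by the elementary same-$n$ monotonicity in $p$, and the problem reduces to the equality case $\Bin(n_1, p^\star) \leq_{st} \Bin(n_2, p_2)$ with $(1-p^\star)^{n_1} = (1-p_2)^{n_2}$. For this remaining step one may either perform a direct CDF comparison via term-by-term estimates on the ratio $\binom{n_1}{j}(p^\star)^j (1-p^\star)^{n_1-j} / \binom{n_2}{j} p_2^j (1-p_2)^{n_2-j}$, or supplement the block-coupling above with independent auxiliary randomization that redistributes mass between over- and under-sized blocks.

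The step I expect to be most delicate is precisely this equality case with non-integer ratio $n_2/n_1$: any integer partition of $n_2$ into $n_1$ blocks must contain a block of size strictly less than $n_2/n_1$, so the block-coupling constraint fails on that block even when the global hypothesis is saturated. This forces the proof to either depart from the strict per-block structure (via CDF comparison or randomization correlating blocks) or to proceed through a careful limiting argument; in either route, one must track the stochastic ordering carefully to preserve it despite the local failure of the naive construction.
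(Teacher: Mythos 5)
Your necessity argument is correct, and your block coupling is a clean, correct proof of sufficiency in the special case $n_1 \mid n_2$: the blocks of equal size $n_2/n_1$ turn the hypothesis $(1-p_1)^{n_1}\geq (1-p_2)^{n_2}$ into exactly the per-block condition, and the pointwise bound $X\leq Y$ follows. The reduction of the general case to the equality case $(1-p^\star)^{n_1}=(1-p_2)^{n_2}$ via monotonicity in $p$ at fixed $n$ is also fine. But at that point the proof stops: the equality case with $n_2/n_1\notin\NN$ is precisely the substantive content of the lemma, and you leave it with two unexecuted suggestions. Neither is routine. The "term-by-term estimates on the ratio" of the two pmfs do not lead anywhere directly, because the ratio is a product of an increasing factor $\binom{n_2}{j}/\binom{n_1}{j}$ and a decreasing geometric factor (since $p^\star\geq p_2$), so it is not monotone and the likelihood-ratio route to $\leq_{st}$ fails in general; one would instead need a sign-change/crossing argument or an explicit CDF comparison, which is real work. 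Likewise, the natural repairs of the block coupling go the wrong way: passing to a common multiple $N$ of $n_1$ and $n_2$ only yields $\Bin(n_1,p^\star)\leq_{st}\Bin(N,p_N)$ and $\Bin(n_2,p_2)\leq_{st}\Bin(N,p_N)$, which cannot be chained into the desired comparison, and a pathwise refinement coupling (e.g.\ counting occupied intervals of a Poisson process) is not pointwise monotone when one partition does not refine the other. So there is a genuine gap, which you yourself flag.

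For comparison, the paper does not prove the lemma at all: it invokes it as Theorem 1 of the cited reference \cite{KM2010} (Klenke--Mattner), where the sufficiency is established by a nontrivial argument that, in effect, handles the tight case directly (reducing to comparisons of binomials with consecutive numbers of trials and equal probability of the value $0$, followed by an explicit distribution-function computation). A self-contained completion of your approach would have to supply exactly that missing step, e.g.\ prove $\Bin(n,p)\leq_{st}\Bin(n+1,q)$ when $(1-q)^{n+1}=(1-p)^n$ by a direct CDF estimate, and then iterate; as written, your proposal establishes the lemma only when $n_1$ divides $n_2$.
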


In particular, we have:

\begin{corollary} \label{cor-bin}
If $x \leq y = o(n)$, we have (for $n$ large enough)
\[
x - \Bin(x,\sqrt{x/n}) \leq_{\mathrm{st}} y - \Bin(y, \sqrt{y/n}).
\]
\end{corollary}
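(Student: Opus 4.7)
The plan is to reduce the claim to Lemma~\ref{lem-bin} via the elementary identity $x - \Bin(x,p) \stackrel{d}{=} \Bin(x,1-p)$. Applying this with $p = \sqrt{x/n}$ on the left and $p = \sqrt{y/n}$ on the right, the desired stochastic comparison becomes
\begin{eqnarray*}
\Bin\bigl(x,\,1-\sqrt{x/n}\bigr) \leq_{st} \Bin\bigl(y,\,1-\sqrt{y/n}\bigr).
\end{eqnarray*}
Then I would invoke Lemma~\ref{lem-bin} with $n_1=x$, $p_1 = 1-\sqrt{x/n}$, $n_2=y$, $p_2 = 1-\sqrt{y/n}$. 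Condition (i), $n_1 \leq n_2$, is exactly the hypothesis $x \leq y$. So everything reduces to verifying condition (ii), which here reads
\begin{eqnarray*}
\bigl(\sqrt{x/n}\bigr)^x \,\geq\, \bigl(\sqrt{y/n}\bigr)^y.
\end{eqnarray*}

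Taking logarithms (and noting that $x/n, y/n < 1$ eventually, so both sides are negative), this is equivalent to $x \log(n/x) \leq y \log(n/y)$. To establish this I would study the function $f(t) = t \log(n/t)$ on $(0,n)$, computing $f'(t) = \log(n/t)-1$, which is positive precisely for $t < n/e$. Since $y = o(n)$, for all sufficiently large $n$ we have $x \leq y < n/e$, so $f$ is non-decreasing on $[x,y]$, giving $f(x) \leq f(y)$ as required.

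There is no real obstacle here: the only substantive step is recognizing that subtracting a $\Bin(x,p)$ from $x$ produces a $\Bin(x,1-p)$, after which the corollary follows immediately from Lemma~\ref{lem-bin} and the monotonicity of $t \mapsto t\log(n/t)$ on the sub-critical regime $t < n/e$ guaranteed by the hypothesis $y = o(n)$.
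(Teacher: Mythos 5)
Your proposal is correct and takes essentially the same route as the paper: rewrite $x-\Bin(x,\sqrt{x/n})$ in distribution as $\Bin(x,1-\sqrt{x/n})$, invoke Lemma~\ref{lem-bin}, and check condition $(ii)$ via the monotonicity of $t\mapsto t\log(n/t)$ on $t<n/e$, which (with $s=t/n$) is precisely the paper's observation that $s\mapsto s^s$ is decreasing for $s<e^{-1}$. Nothing further is needed.
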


\begin{pf}
By the above lemma, it is sufficient to show
$(x/n)^{x/2} \geq(y/n)^{y/2} $, and this is true because $s^s$ is
decreasing near zero (for $s < e^{-1}$).
\end{pf}

 Now we go back to the proof of Lemma~\ref{lem-R}.

\begin{pf*}{Proof of Lemma~\ref{lem-R}}
By Lemmas \ref{lem-coupl} and \ref{lem-upp}, for any $\varepsilon> 0$,
$k \geq\alpha_n$ and $n$ large enough, we have
\[
\PP \biggl( \hd_a(1) + \cdots + \hd_a(k) \leq
\frac{\nu}{1+\varepsilon/2}k \biggr) \leq e^{-\gamma k} = o\bigl(n^{-6}\bigr).
\]
We infer that with probability at least $1 - o(n^{-6})$, for any $k\leq
\beta_n$,
\[
\frac{\nu-1}{1+\varepsilon/2}k < d_a + \hd_a(1) + \cdots +
\hd_a(k) - k < (k+1) \Delta_n = o(n).
\]

 By the union bound over $k$, we have with
probability at least $1 - o(n^{-5})$ that for all $\alpha_n \leq k \leq
\beta_n$,
%
\begin{equation}
\label{ineq:S} \frac{\nu-1}{1+\varepsilon/2}k < \hS_a(k) < (k+1)
\Delta_n = o(n).
\end{equation}
Hence in the remainder of the proof we can assume that the above
condition is satisfied.

By Lemma~\ref{lem:upX}, Corollary~\ref{cor-bin} and inequalities~(\ref{ineq:S}),
conditioning on $\hS_a(k)$ and $\{I_a \geq k\}$, we have
\begin{eqnarray*}
\bigl( S_a(k) \mid\{I_a \geq k\} \bigr) &
\geq_{\mathrm{st}}& \frac{\nu
-1}{1+\varepsilon/2} k - \Bin \biggl(\frac{\nu-1}{1+\varepsilon/2} k,
\sqrt{ \biggl(\frac{\nu-1}{1+\varepsilon/2} k \biggr) \Big/ n} \biggr)
\\
&\geq_{\mathrm{st}}& \frac{\nu-1}{1+\varepsilon/2} k - \Bin (\nu k, \sqrt{\nu k / n} ).
\end{eqnarray*}

By Chernoff's inequality, since $k\sqrt{k/n} = o(k/\sqrt{\alpha_n})$,
we have
\[
\PP \bigl( \Bin(\nu k, \sqrt{\nu k /n})\geq k/\sqrt{\alpha_n} \bigr)
\leq\exp \bigl(- \tfrac{1}{3} k/\sqrt{\alpha_n} \bigr) = o
\bigl(n^{-6}\bigr).
\]
Moreover, conditioned on $\{I_a \geq k\}$, we have with probability at
least $1-o(n^{-6})$,
\[
S_a(k) \geq\frac{\nu-1}{1+\varepsilon/2} k - \frac{k}{\sqrt{\alpha_n}} \geq
\frac{\nu-1}{1+\varepsilon} k,
\]
for $n$ large enough. Defining
\[
R''_a(k):= \biggl\{
S_a(k) \geq\frac{\nu-1}{1+\varepsilon} k \biggr\} \qquad\mbox{for }
\alpha_n\leq k \leq\beta_n,
\]
so that $R''_a = \bigcap_{k=\alpha_n}^{\beta_n} R''_a(k)$, we have
%
\begin{equation}
\PP \bigl( R''_a(k) \mid
I_a \geq k \bigr) \geq1- o\bigl(n^{-6}\bigr).
\end{equation}

Thus, by using the fact that $R''_{a}(k-1) \subset \{ I_a \geq k
 \}$, we get
\begin{eqnarray*}
\PP \bigl( R''_a \mid
I_a \geq\alpha_n \bigr) &=& 1 - \PP \Biggl(\bigcup
_{k=\alpha_n}^{\beta_n} R''_a(k)^c
\Big\mid I_a \geq\alpha_n \Biggr)
\\
&=& 1 - \PP \Biggl(R''_a(
\alpha_n)^c \cup \bigcup_{k=\alpha_n+1}^{\beta_n}
\bigl(R''_a(k)^c \cap
R''_{a}(k-1) \bigr) \Big\mid
I_a \geq\alpha_n \Biggr)
\\
&\geq& 1 - \PP \Biggl(R''_a(
\alpha_n)^c \cup \bigcup_{k=\alpha_n+1}^{\beta
_n}
\bigl(R''_a(k)^c \cap \{
I_a \geq k \} \bigr) \Big\mid I_a \geq
\alpha_n \Biggr)
\\
&\geq&1 - \sum_{k=\alpha_n}^{\beta_n} \PP \bigl(
R''_a(k)^c \mid
I_a \geq k \bigr) \geq1-o\bigl(n^{-5}\bigr),
\end{eqnarray*}
which completes the proof.
\end{pf*}

We are now in position to provide the proof of both the propositions.

\begin{pf*}{Proof of Proposition~\ref{prop-up}}
Fix two vertices $u$ and $v$. We can assume that $T_u(\beta_n),T_v(\beta
_n)<\infty$,
that is, $I_u,I_v\geq\beta_n$. Otherwise the statement of the proposition
holds trivially for $u$ and $v$. Note that $\dist_w(u,v) \leq T_u(\beta
_n) + T_{v}(\beta_n)$ is equivalent to
\[
B_w\bigl(u,T_u(\beta_n)\bigr) \cap
B_w\bigl(v,T_v(\beta_n)\bigr) \neq
\varnothing.
\]
Hence, to prove the proposition we need to bound the probability that
$B_w(v,T_v(\beta_n))$ does not intersect $B_w(u,T_u(\beta_n))$.

First consider the exploration process for
$B_w(u,t)$ until reaching $t=T_u(\beta_n)$. We know by
Lemma~\ref{lem-R} that with probability at least $1-o(n^{-5})$,
\[
S_u(\beta_n) \geq\bigl(\nu-1-o(1)\bigr)
\beta_n.
\]
[In other words, there are at least $(\nu-1-o(1)) \beta_n$ half-edges in
$B_w(u,\break T_u(\beta_n))$.]

Next, begin exposing $B_w(v,t)$. Each matching adds a uniform half-edge to
the neighborhood of $v$. Therefore, the probability that
$B_w(v,T_v(\beta_n))$ does not intersect $B_w(u,T_u(\beta_n))$ is at most
\[
\biggl(1 - \frac{(\nu-1-o(1))\beta_n}{m^{(n)}} \biggr)^{\beta_n} \leq \exp\bigl[-
\bigl(9-o(1)\bigr)\log n\bigr] < n^{-4}
\]
for large $n$ (recall that $\beta_n^2 = \frac{9\lambda n\log n}{\nu
-1}$). The union bound over $u$ and $v$ completes the proof.
\end{pf*}

\begin{pf*}{Proof of Proposition~\ref{lem-up-rq}}
Conditioning on the event $R''_a$ defined in Lemma~\ref{lem-R}, we have
for any $\alpha_n\leq k\leq\beta_n$,
\[
T_a(k+1)-T_a(k)\leq_{\mathrm{st}} Y_k
\sim\operatorname{Exp}\bigl(S_a(k)\bigr)\leq_{\mathrm{st}}\operatorname {Exp}
\biggl(\frac{\nu
-1}{1+\varepsilon} k \biggr),
\]
and all the $Y_k$'s are independent.

Letting $s = \sqrt{\alpha_n}$, for $n$ large enough we obtain that
\begin{eqnarray*}
\EE \bigl[ e^{s (T_a(\beta_n) - T_a(\alpha_n))} \mid R''_a
\bigr] &\leq& \prod_{k=\alpha_n}^{\beta_n-1} \biggl(1 +
\frac{s}{({(\nu
-1)k}/{(1+\varepsilon)}) - s} \biggr) \\
&\leq&\prod_{k=\alpha_n}^{\beta_n-1}
\biggl(1 + \frac{s(1+2\varepsilon)}{(\nu
-1)k} \biggr)
\\
&\leq& \exp \Biggl[\frac{s(1+2\varepsilon)}{\nu-1} \sum_{k=\alpha_n}^{\beta
_n-1}
\frac{1}{k} \Biggr]\\
& \leq&\exp \biggl[ \frac{s(1+ 3\varepsilon) \log n}{2(\nu-1)} \biggr].
\end{eqnarray*}
By Markov's inequality,
\begin{eqnarray*}
&&\PP \biggl( T_a(\beta_n) - T_a(
\alpha_n) \geq\frac{(1+4\varepsilon)\log
n}{2(\nu-1)} \Big\mid I_a \geq
\alpha_n \biggr)
\\
&&\qquad\leq1 - \PP\bigl(R''_a\bigr) + \EE
\bigl[e^{s(T_a(\beta_n) - T_a(\alpha_n))} \mid R''_a \bigr]
\exp \biggl(- \frac{s(1+ 4\varepsilon) \log n}{2(\nu
-1)} \biggr)
\\
&&\qquad\leq\exp \biggl(- \frac{s\varepsilon\log n}{2(\nu- 1)} \biggr)+o\bigl(n^{-5}\bigr)= o
\bigl(n^{-1}\bigr),
\end{eqnarray*}
which concludes the proof.
\end{pf*}

\section{Proof of the lower bound}\label{sec:fpp-lower}

In this section we present the proof of the lower bound for Theorem~\ref
{thm-fpp-main}.
To prove the lower bound, it suffices to show that for any $\varepsilon>
0$, there
exists w.h.p. two vertices $u$ and $v$ such that
\[
\dist_w(u,v) >
(1-\varepsilon)
\biggl(\frac{1}{\nu-1} + \frac{2}{\Gamma(\dmin)} \biggr) \log n.
\]

As in the proof of the upper bound, the proof will be different
depending whether $\dmin=1$ or $\geq2$. So we start this section by
proving some preliminary results, including some new notation and
definitions, that we will need in the proof for these cases, and then
divide the end of the proof into two cases.

Fix a vertex $a$ in $G_n \sim G(n, (d_i)_1^n)$, and consider the
exploration process, defined in Section~\ref{sec:explor}.
Recall that $\bT_a(1)$ is the first time when the ball centered at $a$
contains a vertex of forward-degree at least two (i.e., degree at least
$3$); cf. equation~(\ref{eq:defbT}).
To simplify the notation, we denote by $C_a$ the ball centered at $a$
containing exactly one node (possibly in addition to $a$) of degree at
least $3$:
%
\begin{equation}
\label{eq:Ca}C_a:= B_w\bigl(a,
\bT_a(1)\bigr).
\end{equation}

Note that there is a vertex $u$ (of degree $d_u \geq3$) in $C_a$ which
is not in any ball $B_w(a,t)$ for $t< \bT_a(1)$ and we have $\max_{v\in
C_a}\dist_w(a,v)=\dist_w(a,u)$. We define the degree of $C_a$ as
%
\begin{equation}
\deg(C_a) = d_a+d_u-2.
\end{equation}
Remark that at time $\bT_a(1)$ of the exploration process defined in
Section~\ref{sec:explor} starting from $a$, we have at most $\deg(C_a)$
free half-edges, that is, the list $L$ contains at most $\deg(C_a)$
half-edges. [We have the equality if the tree excess until time $\bT
_a(1)$ is zero.]
The following lemma shows that the size of $C_a$ is relatively small.

\begin{lemma}\label{lem-bound-clusters}
Consider a random graph $G(n, (d_i)_1^n)$ where the degrees $d_i$
satisfy Condition \ref{cond-dil}. There exists a constant $M>0$,
independent of $n$, such that w.h.p. for all the nodes $a$ of the
graph, we have $|C_a| \leq M\log n$.
\end{lemma}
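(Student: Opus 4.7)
The plan is to show that each step of the exploration from $a$ has a uniformly positive chance of uncovering a vertex of degree at least $3$, so that the waiting time until such a vertex appears is stochastically dominated by a geometric random variable with constant success probability. This concentrates at $O(\log n)$ with exponentially small tails, which is enough to absorb a union bound over all $n$ starting vertices.

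First I would rewrite $|C_a|$ in terms of the (extended) forward-degree sequence from Section~\ref{sec:explor}. Setting $\kappa_a := \min\{i \geq 1 : \hd_a(i) \geq 2\}$, one has $|C_a| \leq 1 + \kappa_a$ in all cases: if $\kappa_a \leq I_a$ this is exactly the definition of $\bT_a(1)$, while if $\kappa_a > I_a$ the component of $a$ contains no vertex of degree at least $3$ other than possibly $a$ itself, so $|C_a|$ equals the component size $I_a + 1 \leq \kappa_a + 1$. It therefore suffices to control $\kappa_a$.

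Next, I would note that $c := 1 - q_0 - q_1 > 0$: indeed, $\sum_{k \geq 2} q_k = 0$ would force $\nu = q_1 \leq 1$, contradicting $\nu > 1$. By Condition~\ref{cond-dil}, the truncated size-biased distribution $\upi^{(n)}$ converges pointwise to $q$, so $\upi_0^{(n)} + \upi_1^{(n)} \leq 1 - c/2$ for all $n$ large enough. The lower half of Lemma~\ref{lem-coupl}, namely $\uD^{(n)}_i \leq_{st} (\hd_a(i) \mid \hd_a(1), \dots, \hd_a(i-1))$ for $i \leq \beta_n$, then yields
$$\PP\bigl(\hd_a(i) \leq 1 \mid \hd_a(1), \dots, \hd_a(i-1)\bigr) \leq 1 - c/2, \qquad i \leq \beta_n,$$
and iterating in $i$ gives $\PP(\kappa_a > k) \leq (1 - c/2)^k$ for every $k \leq \beta_n$.

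Finally, I would pick any constant $M > 2/|\log(1-c/2)|$, so that $(1-c/2)^{M\log n} = o(n^{-2})$. Since $M \log n \ll \beta_n$, the previous bound gives $\PP(|C_a| > M\log n + 1) = o(n^{-2})$ for each fixed $a$, and a union bound over the $n$ possible centers yields $|C_a| \leq (M+1)\log n$ simultaneously for all $a$ with probability $1 - o(n^{-1})$. The only slightly delicate point --- the dependence between the $\hd_a(i)$'s as half-edges are consumed during the exploration --- is exactly what Lemma~\ref{lem-coupl} was designed to absorb, so no serious obstacle is expected.
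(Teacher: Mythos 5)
Your proposal is correct and follows essentially the same route as the paper: couple the forward-degrees from below by i.i.d. variables with law $\upi^{(n)}$ (Lemma~\ref{lem-coupl}), note that $\upi_0^{(n)}+\upi_1^{(n)}$ stays bounded away from $1$ because $\nu>1$, deduce a geometric tail for the first step producing forward-degree at least two, choose $M$ so the per-vertex failure probability is $o(n^{-1})$ (you get $o(n^{-2})$), and finish with a union bound over the $n$ centers. Your explicit handling of the case $\kappa_a>I_a$, where the component of $a$ contains no vertex of degree at least three, is a small extra care that the paper's argument leaves implicit.
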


\begin{pf}
We consider the exploration process, defined in Section~\ref{sec:explor}, starting from a uniformly chosen vertex $a$, and use the
coupling of the forward-degrees we described in Section~\ref{sse:fpp-coupling}. Recall in particular that each forward-degree $\hd
(i)$ conditioned on the previous forward-degrees is stochastically
larger than a random variable with distribution $\upi^{(n)}$. This
shows that, at each step of the exploration process, the probability of
choosing a node of degree at most two (forward-degree one or zero) will
be at most $\upi_0^{(n)}+\upi_1^{(n)} < 1-\varepsilon$, for some $\varepsilon
>0$ (note that the asymptotic mean of $\upi^{(n)}$ is $\nu$, and by
assumption $\nu>1$). We conclude that there exists a constant $M>0$
such that for all large $n$,
$\PP(|C_a| > M \log n) = o(n^{-1})$.
The union bound over $a$ completes the proof.
\end{pf}

For two subsets of vertices $U,W \subset V$, the (weighted) distance
between $U$ and $W$ is defined as usual,
\[
\dist_w(U,W):= \min \bigl\{ \dist_w(u,w) \mid u
\in U, w \in W \bigr\}.
\]

For two nodes $a,b$, define the event $\HH_{a,b}$ as
%
\begin{equation}
\HH_{a,b}:= \biggl\{\frac{1-\varepsilon}{\nu-1} \log n < \dist_w
(C_a,C_b) < \infty \biggr\}.
\end{equation}

Note that $\frac{\log n}{\nu-1}$ is the typical distance, so the left
inequality in the definition of the above event means that $C_a$ and
$C_b$ have the right typical distance in the graph [modulo a factor
$(1-\varepsilon)$]. The right inequality simply means that $a$ and $b$
belong to the same connected component. The following proposition is
the crucial step in the proof of the lower bound, the proof of which is
postponed to the end of this section.

\begin{proposition}\label{prop-low-dist}
Consider a random graph $G(n, (d_i)_1^n)$ with i.i.d. rate one
exponential weights on its edges. Suppose that the degree sequence
$(d_i)_1^n$ satisfies Condition \ref{cond-dil}. Assume that the number
of nodes with degree one satisfy $u_1^{(n)} = o(n)$, and let $a$ and
$b$ be two distinct vertices such that $\deg(C_a) = O(1)$, and $\deg
(C_b)=O(1)$. Then for all $\varepsilon> 0$,
\[
\PP (\HH_{a,b} ) = 1- o(1).
\]
Furthermore, the same result holds without the condition $\deg(C_a) =
O(1)$ [resp., $\deg(C_b) = O(1)$] if the node $a$ (resp., $b$) is chosen
uniformly at random.
\end{proposition}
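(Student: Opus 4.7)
My plan is to simultaneously grow the balls around $C_a$ and $C_b$ using the exploration process of Section~\ref{sec:explor}, and to show that with probability $1-o(1)$ these balls do not meet before time $t^\star := \frac{(1-\epsilon)\log n}{2(\nu-1)}$. Since the weighted distance between two sets equals twice the first time at which balls grown at unit rate from those sets touch, the absence of meeting up to $t^\star$ yields the lower bound $\dist_w(C_a, C_b) > \frac{(1-\epsilon)\log n}{\nu-1}$. The finiteness $\dist_w(C_a, C_b) < \infty$ is handled separately by the branching-process arguments already in the paper: as $\deg(C_a), \deg(C_b) \geq 2$ and $\nu > 1$, each ball reaches size $\beta_n$ with probability $1 - o(1)$ (so that both $a$ and $b$ lie in the giant component), after which Proposition~\ref{prop-up} gives the intersection.

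The heart of the argument is a ball-size bound. Set $M_n := n^{(1-\epsilon/2)/2}$. I would first show that with probability $1 - o(1)$ both $|B_w(C_a, t^\star)| \leq M_n$ and $|B_w(C_b, t^\star)| \leq M_n$. This splits into two sub-steps. (i) \emph{Control of the forward-degrees}: by Lemma~\ref{lem-coupl} the sequence $(\hat d_a(i))$ is stochastically dominated by i.i.d.\ copies of the variable with law $\bpi^{(n)}$, whose mean converges to $\nu$. A Chernoff inequality combined with a union bound over $k \leq M_n$ gives $\hat S_a(k) \leq (1+\eta)(\nu-1)k + \deg(C_a)$ for all $k \leq M_n$ with probability $1 - o(1)$, for any fixed $\eta > 0$. (ii) \emph{Control of the time}: conditional on (i), the inter-arrival times satisfy $T_a(k+1) - T_a(k) \geq_{st} Y_k \sim \mathrm{Exp}((1+\eta)(\nu-1)k + O(1))$ with the $Y_k$'s independent; the sum $\sum_{k=1}^{M_n} Y_k$ has mean $\frac{\log M_n}{(1+\eta)(\nu-1)}(1 + o(1))$ and variance $O(1)$. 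Choosing $\eta$ small enough that the mean exceeds $t^\star$ by $\Theta(\log n)$, an exponential Chebyshev / MGF bound (analogous to the one in the proof of Proposition~\ref{lem-up-rq}, but in the opposite direction) gives $\PP(T_a(M_n) < t^\star) = o(1)$.

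Granted these bounds, at any time $t \leq t^\star$ both $|L_a|$ and $|L_b|$ are $O(M_n)$. I would then bound the meeting probability as follows: in the joint exploration up to $t^\star$, each new half-edge added to either $L_a$ or $L_b$ (or matched into one of them) has probability at most $\max(|L_a|, |L_b|)/(m^{(n)} - o(m^{(n)})) = O(M_n/n)$ of being paired into the opposite list. The total number of such events over the two explorations is $O(M_n)$, so a union bound gives an overall meeting probability of $O(M_n^2/n) = O(n^{-\epsilon/2}) = o(1)$. Combining with the non-meeting yields $\dist_w(C_a, C_b) > 2 t^\star$ with probability $1 - o(1)$.

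For the \emph{furthermore} clause, when $a$ is chosen uniformly at random over $V$, the degree $d_a$ has limiting distribution $(p_k)$ (with finite mean $\mu$), and the degree of the first vertex of forward-degree $\geq 2$ encountered in the exploration from $a$ is stochastically dominated by $\bpi^{(n)}$ (of mean $\to \nu$), so $\deg(C_a) = O_p(1)$; fixing a large constant $K$ and conditioning on $\deg(C_a) \leq K$ (an event of probability close to one as $K \to \infty$, uniformly in $n$), the argument of the previous paragraphs applies. The main obstacle is sub-step (ii) above: proving the lower-tail concentration $\PP(T_a(M_n) < t^\star) = o(1)$ via a careful MGF computation on a sum of independent exponentials with decaying rates $\sim c k$, while simultaneously tracking the joint evolution of the two explorations and ensuring that the choice of $\eta$ keeps the gap $\EE[\sum_k Y_k] - t^\star$ of order $\log n$.
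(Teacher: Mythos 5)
Your strategy is correct and matches the paper's at the top level (control the ball size at time $t_n=\frac{(1-\epsilon)\log n}{2(\nu-1)}$, then bound the meeting probability by a first-moment argument over half-edges), but the technical machinery you use to control the ball size is genuinely different from the paper's. The paper first applies Lemma~\ref{lem-coupl} to stochastically dominate $\hS_{C_a}(i)$ by $\log n + \sum_{j\leq i}\bD_j^{(n)} - i$, then uses a dedicated coupling lemma (Lemma~\ref{lem-coupl-exp}, a consequence of Jensen's inequality) to pass from a sum of independent exponentials with \emph{random} rates to a sum with \emph{deterministic} rates $\log n + (\bnu^{(n)}-1)i$, and finally computes $\PP(T^*(z_n)\leq t_n)$ by an explicit multi-dimensional integral, obtaining the strong bound $o(n^{-4})$. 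Your route instead conditions on a Chernoff-plus-union-bound event that $\hS_a(k)\leq(1+\eta)(\nu-1)k+O(1)$ for all $k\leq M_n$, and then estimates the sum of the resulting (conditionally independent) exponentials by Chebyshev, using the observation that the variance of $\sum_{k\leq M_n}\mathrm{Exp}(ck)$ is $O(1)$ while the gap between its mean and $t_n$ is $\Theta(\log n)$; this only yields a probability $o(1)$ of the ball being too large, which is all the proposition needs. You also take a different, polynomially smaller, cutoff $M_n=n^{(1-\epsilon/2)/2}$ instead of the paper's $z_n=\sqrt{n/\log n}$; this makes the final meeting-probability bound $O(M_n^2/n)=O(n^{-\epsilon/2})$ cleaner and spares you the extra Markov bound on $\hS_{C_a}(z_n)$ that the paper needs. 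For the ``furthermore'' clause you condition on $\deg(C_a)\leq K$ and let $K\to\infty$, whereas the paper uses a Markov bound to get $\deg(C_a)\leq\log n$ w.h.p.; both work. What your approach buys is a more elementary computation (no integral gymnastics, no Jensen coupling); what it gives up is the quantitative rate $o(n^{-4})$ that the paper's integral computation provides, though that rate is not needed for this proposition. Two minor points to tidy when writing this up formally: (a) state the domination $T_a(k+1)-T_a(k)\geq_{st}Y_k$ as a statement \emph{conditional on the full forward-degree sequence} (on which the inter-arrival times are genuinely independent exponentials) rather than merely conditional on the event (i), so that the independence of the $Y_k$'s is unambiguous; (b) make the meeting-probability step sequential, as the paper does --- first expose $B_w(C_a,t_n)$ entirely, then expose $B_w(C_b,t_n)$ --- since the ``joint exploration'' phrasing requires some care about which filtration you are union-bounding over.
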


Note that in particular, Proposition~\ref{prop-low-dist} is still valid
when $a$ and $b$ are chosen uniformly
at random and hence provides a lower bound for (\ref{th:dist}).

Assuming the above proposition, we now show that:
\begin{longlist}[(ii)]
%
\item[(i)] If the minimum degree $\dmin\geq2$, then there are pairs
of nodes $a$ and $b$ of degree $\dmin$ such that $\HH_{a,b}$ holds, and
in addition, the closest nodes to each with forward-degree at least two
is at distance at least $(1-\varepsilon)\log n/(\dmin(1-q_1))$ w.h.p., for
all $\varepsilon>0$.
\item[(ii)] If the minimum degree $\dmin=1$, then there are pairs of
nodes of degree one such that $\HH_{a,b}$ holds, and in addition, the
closest node to each which belongs to the $2$-core is at least
$(1-\varepsilon)\log n/(1-\lambda_*)$ away w.h.p., for all $\varepsilon>0$.
\end{longlist}

This will finish the proof of the claimed lower bound.

\subsection{Proof of the lower bound in the case \texorpdfstring{$d_{\min}\geq2$}{$d_{\min}>=2$}}
Let $V^*$ be the set of all vertices of degree $\dmin$. We call a
vertex $u$ in $V^*$ \textit{good} if $\bT_u(1) $ is at least $\frac
{1-\varepsilon} {\dmin(1-q_1)} \log n$, that is,
\[
\bT_u(1) \geq\frac{1-\varepsilon} {\dmin(1-q_1)} \log n,
\]
and if in addition, $ \deg(C_u) \leq K$ for a constant $K$ chosen as
follows. Let $\hD$ be a random variable with the size-biased
distribution, that is, $\PP(\hD= k) = q_k$. The constant $K$ is chosen
in order to have with positive probability $\hD\leq K - \dmin+1$
conditioned on the event that $\hD\geq2$, that is,
%
\begin{equation}
y=y_K:= \PP (\hD\leq\dmin- 1 + K \mid\hD\geq2 ) > 0.
\end{equation}
It is easy to verify that such $K$ exists since $\nu>1$.

It will be convenient to consider the two events in the definition of
good vertices separately, namely, for a vertex $u\in V^*$, define
%
\begin{eqnarray}
\cE_u &:=& \biggl\{ \bT_u(1) \geq\frac{1-\varepsilon} {\dmin(1-q_1)}
\log n \biggr\} \quad\mbox{and}
\\
\cE'_u &:=& \bigl\{ \deg(C_u) \leq K
\bigr\}.
\end{eqnarray}
We note that in the case $\dmin\geq3$, the event $\cE_u$ for $u \in
V^*$ is equivalent to having a weight greater than $\frac{1-\varepsilon}{
\dmin} \log n$ on all the $\dmin$ edges connected to $u$, and clearly,
the two above events $\cE'_u$ and $\cE_u$ are independent
(conditionally on $u\in V^*$, i.e., $d_u=\dmin$).

For $u \in V^*$, let $A_u$ be the event that $u$ is good, $A_u:= \cE_u
\cap\cE'_u$, and let $Y$ be the total number of good vertices, $Y:=\sum_u \ind_{A_u}$. In the following, we first obtain a bound for the
expected value of $Y$, and then use the second moment inequality to
show that w.h.p. $Y =\Omega(n^{\varepsilon})$.

Consider the exploration process defined in Section~\ref{sec:explor},
starting from a node $u \in V^*$.
At the beginning, each step of the exploration process is an
exponential with parameter $\dmin$ (since there are $\dmin$
yet-unmatched half-edges adjacent to the explored vertices). In each
step, the probability that the new half-edge of the list $L$ does not
match to the other half-edge of $L$ 
is at least $1-1/n$. This follows by observing that there are at least
$n$ yet-unmatched half-edges (by $\nu> 1$), and by using Lemma~\ref
{lem-bound-clusters} (which says that before $M\log n$ steps the
exploration process meets a vertex of forward-degree at least two). By
the forward-degree coupling arguments of Section~\ref{sse:fpp-coupling}, the probability that a new matched node be of
forward-degree one is at least $\bpi_1^{(n)}$. This shows that, with
probability at least $(1-1/n)\bpi_1^{(n)}$ the exploration process adds
a new node of forward-degree one.
This shows that the first step in the exploration process a vertex of
forward-degree at least two is added will be stochastically bounded
below by a geometric random variable of parameter $(1-1/n)\bpi
_1^{(n)}$. Each step takes rate $\dmin$ exponential time. Therefore,
\begin{eqnarray*}
\PP(\cE_u) &=& \PP \biggl(\bT_u(1) \geq
\frac{1-\varepsilon} {\dmin
(1-q_1)} \log n \biggr)
\\
&\geq& \PP \biggl(\operatorname{Exp} \bigl(\dmin \bigl(1-(1-1/n)\bpi_1^{(n)}
\bigr) \bigr) \geq\frac{1-\varepsilon} {\dmin(1-q_1)} \log n \biggr).
\end{eqnarray*}
In the last inequality we used the fact that a sum of a geometric (with
parameter $\pi$) number of
independent exponential random variables of rate $\mu$ is
distributed as an exponential random variable of rate $(1-\pi)\mu$.
Note that this in particular shows that
\[
\PP(\cE_u) \geq\bigl(1-o(1)\bigr) \exp\bigl(-(1-\varepsilon) \log
n\bigr) = \bigl(1-o(1)\bigr) n^{-1 +
\varepsilon}.
\]

By using the coupling arguments of Section~\ref{sse:fpp-coupling} (and
by using Lemma~\ref{lem-bound-clusters}), to bound the forward-degrees
from above (and below) by i.i.d. random variables having distributions
$\bpi^{(n)}$ (and $\upi^{(n)}$) and then using the fact that the
asymptotic distributions of both $\bpi^{(n)}$ and $\upi^{(n)}$
coincides with the size biased distribution $\{q_k\}$, we have
$\PP(\cE'_u \mid\cE_u) = (1 \pm o(1))y$.
We conclude that $\PP(A_u) = \PP(\cE'_u \mid\cE_u) \PP(\cE_u) \geq (1
\pm o(1))y n^{-1 + \varepsilon}$.

This shows that
\[
\EE[Y] = \sum_{u\in V^*} \PP(A_u) \geq
\bigl(1 \pm o(1)\bigr) y p_{\dmin} n^{\varepsilon}.
\]

 Note that above, we used Condition~\ref{cond-dil} which
implies that $|V^*| = (1\pm o(1))p_{\dmin}n$.

We now show that $\Var(Y) = o(\EE[Y]^2)$. Applying the Chebyshev
inequality, this will show that $Y \geq\frac{2}{3} yp_{\dmin}
n^{\varepsilon}$ with high probability.

For any pair of vertices $u,v \in V^*$ such that $C_u \cap C_v =
\varnothing$, conditioning on $A_u$ does not have much effect on the
asymptotic of the degree distribution (by Lemma~\ref
{lem-bound-clusters} the size of each component $C_u$ is at most $M\log
n$), and hence, we deduce by the coupling argument of Section~\ref{sse:fpp-coupling} that for $u$ and $v$ such that $C_u \cap C_v =
\varnothing$,
\[
\PP(A_v \cap A_u) = \bigl(1 \pm o(1)\bigr)
\PP(A_u) \PP(A_v).
\]

We infer
\begin{eqnarray*}
\Var(Y) &=& \EE\bigl[Y^2\bigr] - \EE[Y]^2 = \EE\biggl[
\sum_{u,v \in V^*} \ind_{A_u} \ind _{A_v}
\biggr] - \EE[Y]^2
\\
&=& \EE \biggl[\sum_{u,v \in V^*\dvtx C_u \cap C_v \neq\varnothing} \ind_{A_u} \ind
_{A_v} + \sum_{u,v \in V^*\dvtx C_u \cap C_v = \varnothing} \ind_{A_u}
\ind _{A_v} \biggr] - \EE[Y]^2
\\
&=& \EE\biggl[\sum_{u \in V^*} \ind_{A_u} \sum
_{v \in V^*\dvtx C_u \cap C_v \neq
\varnothing} \ind_{A_v} + \sum
_{u,v \in V^*\dvtx C_u \cap C_v = \varnothing} \ind_{A_u} \ind_{A_v} \biggr] -
\EE[Y]^2
\\
&\leq&(K+1) (M \log n) \EE[Y] +\EE\biggl[ \sum_{u,v \in V^*\dvtx C_u \cap C_v =
\varnothing}
\ind_{A_u} \ind_{A_v} \biggr] - \EE[Y]^2
\\
&=& o\bigl(\EE[Y]^2\bigr).
\end{eqnarray*}
In the inequality above, we used Lemma~\ref{lem-bound-clusters} to
bound w.h.p. the size of all $C_w$ by $M\log n$ (for some large enough
$M$) for any node $w$ in the graph, and used the fact that if the event
$A_u$ holds, then there are at most $K$ edges out-going from $C_u$.
Each of the vertices $v$ with the property that $C_u\cap C_v\neq
\varnothing$ should be either already on $C_u$ or connected with a path
consisting only of vertices of degree two to $C_u$ (in which case, this
path should belong to $C_v$). A simple analysis then shows that the
number of nodes $v$ with the property that $C_u \cap C_v \neq\varnothing
$ is bounded by $(K+1) M \log n$, and the inequality follows.

This completes the proof of the fact that $Y \geq\frac{2}{3} p_{\dmin}
yn^{\varepsilon}$ with high probability.

We consider first the flooding time, and obtain the corresponding lower
bound. Let $Y'$ denote the number of good vertices that are at distance
at most $\frac{1-\varepsilon} {\dmin(1-q_1)} \log n + \frac{1-\varepsilon
}{\nu-1} \log n$ from a vertex $a$ (chosen uniformly at random). It is
clear that the lower bound follows by showing that $Y' < Y$ with high
probability, that is, $Y-Y' >0$ w.h.p. To show this, we will bound the
expected value of $Y'$ and use Markov's inequality.

Since by Condition~\ref{cond-dil}, $V^*$ has size linear in $n$ by
applying Proposition~\ref{prop-low-dist}, we obtain that for a
uniformly chosen vertex $u\in V^*$, conditioning on $A_u$, we have $\PP
(\HH_{a,u}) = 1-o(1)$. Indeed, the two events $\HH_{a,u}$ and $\cE_u$
are independent, and conditioning on $\cE'_u$ is the same as
conditioning on $\deg(C_u) \leq K = O(1)$. Therefore, for a uniformly
chosen vertex $u$ in $V^*$, we have
\[
\PP \bigl(A_u \cap\HH^c_{a,u} \bigr) = o
\bigl(\PP(A_u)\bigr),
\]
where $\HH^c_{a,u}$ denotes the complementary event of $\HH_{a,u}$,
that is, the event that $\HH_{a,u}$ does not occur.
Thus, a straightforward calculation shows that
$\EE[Y'] = o(\EE[Y]) = o(n^{\varepsilon})$.
By Markov's inequality, we conclude that $Y' \leq\frac{1}{3} p_{\dmin
}y n^{\varepsilon}$ w.h.p., and hence $Y-Y'$ is w.h.p. positive. This
implies the existence of a vertex $u$ whose distance from $a$ is at
least $ ( \frac{1}{\nu-1} + \frac{1}{\dmin(1-q_1)} )(1-\varepsilon
) \log n $. Hence for any $\varepsilon> 0$ we have w.h.p.
\begin{eqnarray*}
&&\flood_w(a,G_n) \\
&&\qquad\geq\max_{ u \in V^*}
\dist_w(a,u)
\\
&& \qquad\geq\biggl( \frac{1}{\nu-1} + \frac{1}{2(1-q_1)}\ind[\dmin=2] +
\frac{1}{\dmin}\ind [\dmin\geq3] \biggr) (1-\varepsilon) \log n.
\end{eqnarray*}

We now turn to the proof of the lower bound for the (weighted)
diameter of the graph. The proof will follow the same strategy as
for the flooding time, but this time we need to consider the pairs of
good vertices.

Let $R$ denote the number of pairs of distinct good vertices. Recall
we proved above that w.h.p. $Y \geq\frac{2}{3} \EE[Y]$. Thus
\[
R = Y(Y-1) \geq\frac{2\EE[Y]}{3} \biggl( \frac{2\EE[Y]}{3}-1\biggr) >
\frac{1}{4} \EE[Y]^2.
\]

The probabilities that $u$ and $v$ are both good and $\HH_{u,v}$ does
not happen can be bounded as follows:
%
\begin{eqnarray}
\label{eq:intersect}
\nonumber
\PP \bigl(A_u\cap A_v \cap
\HH^c_{u,v} \bigr) &=& \PP(A_u \cap
A_v) \PP\bigl(\HH^c_{u,v} \mid
A_u, A_v\bigr)
\\
&=& \PP(A_u \cap A_v) \PP \bigl(
\HH^c_{u,v} \mid\deg(C_u)\leq K,
\deg(C_v) \leq K \bigr)
\nonumber
\\[-8pt]
\\[-8pt]
\nonumber
&& \mbox{(We used the independence of $\HH_{u,v}$ and $\cE
_u$ and $\cE_v$)}
\\
&=& o\bigl(\PP(A_u \cap A_v)\bigr).\nonumber
\end{eqnarray}
The last equality follows from Proposition~\ref{prop-low-dist}, since
$C_u$ and $C_v$ are of degree $O(1)$.

To conclude, consider $R'$ the number of pairs of good vertices that
are at distance at most $(1-\varepsilon)(2\frac{\log n} {\dmin(1-q_1)} +
\frac{\log n}{\nu-1})$. By using equation~(\ref{eq:intersect}), we have
$\EE R' = o(\EE[Y]^2)$. Applying Markov's inequality, we obtain that
w.h.p. $R' \leq\frac{1}{6} (\EE[Y])^2 $, and thus $R-R'$ is w.h.p.
positive. This implies that for any $\varepsilon> 0$, we have w.h.p.
\begin{eqnarray*}
\diam_w(G_n) &\geq& \max_{ u,v \in V^* }
\dist_w(u,v)
\\
&\geq& \biggl( \frac{1}{\nu-1} + \frac{1}{1-q_1}\ind[\dmin=2] +
\frac
{2}{\dmin} \ind[\dmin\geq3] \biggr) (1-\varepsilon) \log n.
\end{eqnarray*}

\subsection{Proof of the lower bound in the case $d_{\min}=1$}\label{sec:low1c}
Consider the $2$-core algorithm, and stop the process the first time
the number of nodes of degree one drops below $n^{1-\varepsilon/2}$. Let
$V^*$ be the set of all nodes of degree one at this time. We denote by
$\tG_n(V^*)$ the graph constructed by configuration model on the set of
remaining nodes (this is indeed the $V^*$-augmented 2-core). Observe
that proving the lower bound on the graph $\tG_n(V^*)$ gives us the
lower bound on $G_n$.

Since $|V^*| = o(n/\log n)$, and the $2$-core has linear size in $n$,
w.h.p. the degree sequence of $\tG_n(V^*)$ has the same asymptotic as
the degree sequence in the $2$-core of $G_n$; see Appendix~\ref{sec:fpp-2core}, Lemma~\ref{lem-core-augmented} for more details. In
particular, we showed in Appendix~\ref{sec:fpp-2core} that for the
size-biased degree sequence of the 2-core's degree distribution, we
have $\tilde q_1 =\lambda_*$, and for its mean, we have $\tilde\nu =
\nu$.

Repeating the coupling arguments of Section~\ref{sse:fpp-coupling} and
defining $\tilde{\bpi}^{(n)}$ (similar to the definition of $\bpi
^{(n)}$) for the degree sequence of $\tG_n(V^*)$, we infer that $\tilde
{\bpi}_1^{(n)} \to\lambda_*$.

As before, call a vertex $u$ in $V^*$ \textit{good} if both the events
$\cE_u$ and $\cE'_u$ hold. Recall the definition of the two events
%
\begin{eqnarray}
\cE_u &:=& \biggl\{ \tilde{\bT}_u(1) \geq
\frac{1-\varepsilon} {1-\lambda
_*} \log n \biggr\} \qquad\mbox{and}
\\
\cE'_u &:=& \bigl\{ \deg(C_u) \leq K
\bigr\}.
\end{eqnarray}
Here the constant $K\geq2$ is chosen with the property that $\tq_K >
0$ ($\tq$ is the size-biased probability mass function corresponding to
the $2$-core; cf. Appendix~\ref{sec:fpp-2core}), and $\tilde{\bT}_u$ is
defined similar to $\bT_u$ for the graph $\tG_n(V^*)$.

Consider the exploration process starting from a node $u \in V^*$. At
the beginning, each step of the exploration process is an exponential
of rate one, and the probability that each new matched node be of
forward-degree exactly one is at least $\tilde{\bpi}_1^{(n)}$.
Similar to the case of $\dmin=2$, we obtain
\begin{eqnarray*}
\PP(A_u) &\geq& \bigl(1 \pm o(1)\bigr) \tq_K \PP
\bigl(\operatorname{Exp} \bigl(1-\tilde {\bpi}_1^{(n)} \bigr)
\geq{1-\varepsilon} {(1-\lambda_*)} \log n \bigr)
\\
&=& \bigl(1 \pm o(1)\bigr) \tq_K \exp\biggl(-(1-\varepsilon)
\frac{1-\lambda_*}{1-\tilde
{\bpi}_1^{(n)}}\log n\biggr)
\\
&=&\bigl(1 \pm o(1)\bigr) \tq_K n^{-1+\varepsilon}.
\end{eqnarray*}
This shows that
\[
\EE[Y] = \sum_{u\in V^*} \PP(A_u)\geq
n^{1-\varepsilon/2}\bigl(1 \pm o(1)\bigr) \tq_K n^{-1+\varepsilon} =
\bigl(1 \pm o(1)\bigr)\tq_K n^{\varepsilon/2}.
\]
Similarly, we obtain that $\Var(Y) = o(\EE[Y]^2)$, and the rest of the
proof follows similar to the precedent case by using Proposition~\ref
{prop-low-dist} for $\tilde G_n(V^*)$. Note that in $\tilde G_n(V^*)$,
the number of vertices of degree one is $o(n) = o(|\tilde G_n(V^*)|)$,
and thus Proposition~\ref{prop-low-dist} can be applied.

At the present we are only left to prove Proposition~\ref{prop-low-dist}.

\subsection{Proof of Proposition~\texorpdfstring{\protect\ref{prop-low-dist}}{4.2}}
In this section we present the proof of Proposition~\ref{prop-low-dist}.
It is shown in \cite{janson09b,MolReed98} that the giant component of a
random graph $G(n, (d_i)^n_1)$ for $(d_i)_1^n$ satisfying Condition~\ref
{cond-dil} contains w.h.p. all but $o(n)$ vertices [since $\nu> 1$ and
$u_0^{(n)} + u_1^{(n)} = o(n)$]. This immediately shows that $\PP(\dist
_w (C_a,C_b) < \infty) = 1-o(1)$. Define $t_n:= \frac{1-\varepsilon}{2(\nu
-1)} \log n $. So to prove the proposition, we need to prove that $\dist
_w(C_a,C_b)$ is lower bounded by $t_n$ w.h.p. in the case where either
$\deg(C_a)=O(1)$ [resp., $\deg(C_b)=O(1)$] or $a$ (resp., $b$) is chosen
uniformly at random.

In the case where $a$ is chosen uniformly at random, it is easy to
deduce, by using Markov's inequality, that we have w.h.p. $\deg(C_a)
\leq\log n$. Indeed, this is true since $\deg(C_a)$ is asymptotically
distributed as $ ( D + \hD- 1 \mid\hD\geq2 )$, where $\hD$
is a random variable with the size-biased distribution,
and $D$ is independent of $\hD$ with the degree distribution $\{p_k\}$.
[To show this, one can use the coupling argument of
Section~\ref{sse:fpp-coupling} to bound $\deg(C_a)$ stochastically from above.]
And, since this latter random variable has finite moment (by
Condition~\ref{cond-dil}), by applying Markov's inequality, we obtain
w.h.p. $\deg(C_a) \leq\log n$. This shows that in all cases stated in
the proposition, we can assume that $\deg(C_a) \leq\log n$ and $\deg
(C_b) \leq\log n$.

We now consider the exploration process defined in Section~\ref{sec:explor}
starting from~$C_a$; that is, we start the exploration
process with $B=C_a$, and apply the steps one and two of the process.
In a similar way we defined $T_a(i)$, we define $T_{C_a}(i)$ to be the
time of the $i$th step in this continuous-time exploration process.
Similarly, let $\hd_{C_a}(i)$ be the forward-degree of the vertex added
at $i$th exploration step for all $i\geq1$, and define
%
\begin{equation}
\hS_{C_a}(i):= \deg(C_a) + \hd_{C_a}(1) +
\cdots + \hd_{C_a}(i) - i,
\end{equation}
and define $S_{C_a}(i)$ similarly, so that we have $S_{C_a}(i) \leq\hS
_{C_a}(i)$. Note that $T_{C_a}(i)$ obviously satisfies
\[
T_{C_a}(i+1)-T_{C_a}(i) = \operatorname{Exp} \bigl(
S_{C_a}(i) \bigr) \geq _{\mathrm{st}} Y_i \sim
\operatorname{Exp} \bigl( \hS_{C_a}(i) \bigr),
\]
where the random variables $Y_i$ are all independent.

Also, we infer (by Lemma~\ref{lem-coupl}) that
%
\begin{equation}
\hS_{C_a}(i) \leq_{\mathrm{st}} \log n + \sum
_{j=1}^{i} \bD^{(n)}_j -i,
\end{equation}
where $\bD^{(n)}_j$ are i.i.d. with distribution $\bpi^{(n)}$.

Let $\bnu^{(n)}$ be the expected value of $\bD^{(n)}_1$ which is
\[
\bnu^{(n)}:= \sum_k k
\bpi_k^{(n)},
\]
and define
$z_n = \sqrt{n / \log n } $. We will show later that the two growing
balls in the exploration processes started from $C_a$ and $C_b$, for
$a$ and $b$ as in the proposition, will not intersect w.h.p. provided
that they are of size less than $z_n$. We now prove that $T_{C_a}(z_n)
\geq t_n$ with high probability.

 For this, let us define
\[
T'(k) \sim\sum_{i=1}^k
\operatorname{Exp} \Biggl( \log n+ \sum_{j=1}^{i}
\bD ^{(n)}_j - i \Biggr),
\]
where all the exponential variables in the above sum are independent,
such that by the above arguments, we have
\[
T_{C_a}(z_n) \geq_{\mathrm{st}} T'(z_n).
\]

We need the following lemma. [We define $\operatorname{Exp}(s):= +\infty$
for $s \leq0$.]

\begin{lemma}\label{lem-coupl-exp}
Let $X_1,\ldots,X_t$ be a random process adapted to a filtration $\mathcal
{F}_0 = \sigma[\o], \mathcal{F}_1,\ldots,\mathcal{F}_t$, and let $\mu_i =
\EE X_i$, $\bolds{\Sigma}_i = X_1+ \cdots+X_i$, $\Lambda_i = \mu_1 + \cdots
+ \mu_i$. Let $Y_i \sim\operatorname{Exp}(\Sigma_i)$, and $Z_i \sim\operatorname
{Exp}(\Lambda_i)$, where all exponential variables are independent.
Then we have
\[
Y_1 + \cdots + Y_t \geq_{\mathrm{st}}
Z_1 + \cdots + Z_t.
\]
\end{lemma}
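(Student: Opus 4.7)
My plan is to prove the lemma by induction on $t$. The base case $t=1$ follows at once from Jensen's inequality applied to the convex map $s\mapsto e^{-sx}$:
\[ \PP(Y_1>x)=\EE[e^{-\Sigma_1 x}]\geq e^{-\Lambda_1 x}=\PP(Z_1>x), \]
so $Y_1\geq_{st} Z_1$.

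For the inductive step, the natural route is to condition on $\mathcal{F}_{t-1}$, which freezes $\Sigma_1,\dots,\Sigma_{t-1}$ and leaves only $\Sigma_t=\Sigma_{t-1}+X_t$ random through $X_t$. Given $\mathcal{F}_{t-1}$, the partial sum $S_{t-1}$ becomes a sum of independent exponentials with deterministic rates, and one splits via the memoryless property
\[ \PP(S_t>x\mid\mathcal{F}_{t-1})=\PP(S_{t-1}>x\mid\mathcal{F}_{t-1})+\int_0^x f_{S_{t-1}\mid\mathcal{F}_{t-1}}(u)\,\EE\!\left[e^{-\Sigma_t(x-u)}\mid\mathcal{F}_{t-1}\right]du. \]
Applying Jensen to the inner expectation, via convexity of $r\mapsto e^{-r(x-u)}$, replaces $\Sigma_t$ by $\Sigma_{t-1}+\EE[X_t\mid\mathcal{F}_{t-1}]$, and taking expectation over $\mathcal{F}_{t-1}$ reduces the problem to one in which the last rate is deterministic. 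Iterating this peel-off step backward through $\mathcal{F}_{t-2},\dots,\mathcal{F}_0$ replaces every $\Sigma_i$ by its (conditional) mean, and combining with the inductive hypothesis on the first $t-1$ rates closes the argument.

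The principal obstacle I foresee is that each peel-off step introduces the \emph{conditional} mean $\EE[X_i\mid\mathcal{F}_{i-1}]$ rather than the unconditional mean $\mu_i$ appearing in $\Lambda_i$, and these coincide only when the $X_i$'s are independent. Fortunately, this is exactly the regime in which the lemma is applied in the paper, through the i.i.d.\ coupling of Lemma~\ref{lem-coupl}. A more robust alternative, bypassing this issue entirely, would be to establish joint convexity in $(s_1,\dots,s_t)$ of the hypoexponential survival function $\bar H(s;x):=\PP\!\bigl(\sum_i \mathrm{Exp}(s_i)>x\bigr)$ and then apply multivariate Jensen in one stroke to the random vector $(\Sigma_1,\dots,\Sigma_t)$, whose mean is $(\Lambda_1,\dots,\Lambda_t)$. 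Coordinate-wise convexity of $\bar H$ is immediate from the representation $\bar H(s;x)=\EE[e^{-s_i(x-\sum_{j\neq i}E_j)^+}]$, but upgrading to joint convexity is subtle; a promising route is to pass through the Laplace transform in $x$, namely $\int_0^\infty e^{-\theta x}\bar H(s;x)dx=(1-\prod_i s_i/(s_i+\theta))/\theta$, and invert the transform via the Post--Widder formula.
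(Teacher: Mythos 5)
Your base case and the very first peel-off are sound: conditionally on $\mathcal{F}_{t-1}$ the increment $X_t$ enters only the last rate, the conditional survival probability is a mixture of functions $x_t\mapsto e^{-(\Sigma_{t-1}+x_t)c}$ with $c\geq 0$ and hence convex in $x_t$, and conditional Jensen (together with the independence caveat you correctly flag) replaces $X_t$ by $\mu_t$. Up to this point you are reproducing the paper's own two-line argument (Jensen plus induction on a pair). The genuine gap is in the backward iteration, and it is not the conditional-versus-unconditional-mean issue you single out: after the first step, the next increment $X_{t-1}$ sits inside \emph{two} rates, $\Sigma_{t-1}$ and $\Sigma_{t-1}+\mu_t$, so the Jensen step you need is convexity of $a\mapsto\PP\bigl(\cdots+\mathrm{Exp}(a)+\mathrm{Exp}(a+\mu_t)>x\bigr)$, i.e.\ convexity of the hypoexponential survival function along a diagonal direction of the rate vector. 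That convexity is false. With two terms, $\PP\bigl(\mathrm{Exp}(a)+\mathrm{Exp}(b)>x\bigr)=\frac{b e^{-ax}-a e^{-bx}}{b-a}$, and at $x=1$ one computes
\begin{eqnarray*}
\frac12\Bigl[\PP\bigl(\mathrm{Exp}(0.05)+\mathrm{Exp}(0.15)>1\bigr)+\PP\bigl(\mathrm{Exp}(0.15)+\mathrm{Exp}(0.25)>1\bigr)\Bigr]
&\approx& 0.99003 \\
&<& 0.99094 \;\approx\; \PP\bigl(\mathrm{Exp}(0.1)+\mathrm{Exp}(0.2)>1\bigr).
\end{eqnarray*}
Consequently the statement itself fails: take $t=2$, $X_1$ uniform on $\{0.05,0.15\}$ and $X_2\equiv 0.1$ (independent increments, so the regime you call safe does not save it), so that $\mu_1=\mu_2=0.1$; then $\PP(Y_1+Y_2>1)<\PP(Z_1+Z_2>1)$, contradicting $\geq_{st}$. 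The mechanism is transparent in the lower tail: $\PP\bigl(\mathrm{Exp}(a)+\mathrm{Exp}(b)\leq x\bigr)\approx abx^2/2$ for small $x$, and $\Sigma_1\Sigma_2=X_1(X_1+\mu_2)$ is convex in $X_1$, so randomizing an increment shared by several rates inflates the lower tail, the opposite of what is claimed; no peel-off scheme can close the induction because the target inequality is not true.

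The same example disposes of your fallback: $\bar H(s;x)$ is convex in each coordinate, as you note, but it is not jointly convex (it violates midpoint convexity along the direction $(1,1)$ at $(0.1,0.2)$, $x=1$), so multivariate Jensen applied to $(\Sigma_1,\dots,\Sigma_t)$ is unavailable, and no Laplace-transform or Post--Widder manipulation can manufacture a convexity that does not hold. For comparison, the paper's proof of this lemma is exactly the Jensen-plus-induction sketch you propose, and it performs, without justification, precisely the replacement $\EE_{X_1}\bigl[\PP\bigl(\mathrm{Exp}(X_1)+\mathrm{Exp}(X_1+\mu_2)>s\bigr)\bigr]\geq\PP(Z_1+Z_2>s)$ that the numbers above contradict; so you have inherited the paper's flaw rather than introduced a new one. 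Where the lemma is invoked (the bound on $T'(z_n)$ in the proof of Proposition~\ref{prop-low-dist}), the needed estimate should instead be obtained directly, for instance by conditioning on the i.i.d.\ $\bD^{(n)}_j$ and controlling $\sum_{j\leq i}\bD^{(n)}_j$ uniformly in $i\leq z_n$ by a Chernoff bound, which replaces the random rates by $\log n+(\bnu^{(n)}-1+\delta)i$ on an event of overwhelming probability and lets the explicit computation for $T^*(z_n)$ go through.
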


\begin{pf}
By Jensen's inequality, it is easy to see that for positive random
variable $X$, we have
\[
\operatorname{Exp}(X) \geq_{\mathrm{st}} \operatorname{Exp}(\EE X).
\]
Then by induction, it suffices to prove that for a pair of random
variables $X_1$, $X_2$ we have $Y_1 + Y_2 \geq_{\mathrm{st}} Z_1 + Z_2 $. We have
\begin{eqnarray*}
\PP(Y_1 + Y_2 > s) &=& \EE_{X_1} \bigl[
\PP(Y_1 + Y_2 > s | X_1 )\bigr]
\\
&\geq& \EE_{X_1} \bigl[\PP\bigl(\operatorname{Exp}(X_1) +
\operatorname{Exp}(X_1 + \mu_2) > s\bigr)\bigr]
\\
&\geq& \PP(Z_1 + Z_2 >s).
\end{eqnarray*}
\upqed\end{pf}

We infer by Lemma~\ref{lem-coupl-exp},
\[
T'(z_n) \geq_{\mathrm{st}} \sum
_{i=0}^{z_n} \operatorname{Exp} \bigl(\log n + \bigl(\bnu
^{(n)} - 1\bigr) i \bigr) =: T^*(z_n),
\]
where all exponential variables are independent.

We now let $b_n:= \log n - (\bnu^{(n)} - 1)$, so that we have
\begin{eqnarray*}
&&\PP\bigl(T^*(z_n) \leq t\bigr)\\
&&\qquad \leq\int_{\sum x_i \leq t}e^{- \sum_{i=1}^{z_n}
((\bnu^{(n)} - 1)i + b_n)x_i}
\,dx_1 \cdots dx_{z_n} \prod_{i=1}^{z_n}
\bigl(\bigl(\bnu ^{(n)} - 1\bigr)i + b_n\bigr)
\\
&&\qquad= \int_{0 \leq y_1 \leq\cdots\leq y_{z_n} \leq t}e^ {-(\bnu^{(n)} - 1)
\sum_{i=1}^{z_n} y_i} e^{-b_n y_{z_n}}
\,dy_1 \cdots dy_{z_n} \prod_{i=1}^{z_n}
\bigl(\bigl(\bnu^{(n)} - 1\bigr)i + b_n\bigr),
\end{eqnarray*}
where $y_k = \sum_{i=0}^{k-1} x_{z_n-i}$.
Letting $y$ play the role of $y_{z_n}$, and accounting for all
permutations over $y_1, \ldots, y_{z_{n}-1}$ (giving each such variable
the range $[0, y]$), we obtain{
\begin{eqnarray*}
&&\PP\bigl(T^*(z_n) \leq t\bigr)\\
&&\qquad \leq\bigl(\bnu^{(n)} - 1
\bigr)^{z_n} \frac{\prod_{i=1}^{z_n} (i + {b_n}/{(\bnu^{(n)} - 1)})}{(z_n-1)!}
\\
&&\qquad\quad{}\times\int_0^t e^{-(\bnu^{(n)} - 1 + b_n) y} \biggl( \int
_{[0,y]^{z_n-1}} e^{-(\bnu^{(n)} - 1)\sum_{i=1}^{z_n-1}y_i} \,dy_1\cdots dy_{z_n-1}
\biggr) \,dy
\\
&&\qquad= z_n \frac{\prod_{i=1}^{z_n} (i + {b_n}/{\bnu^{(n)} -
1})}{z_n!}\bigl(\bnu^{(n)} - 1\bigr)
\\
&&\qquad\quad{}\times\int_0^t e^{-(\bnu^{(n)} - 1 + b_n) y} \Biggl( \prod
_{i=1}^{z_n-1} \int_0^y
\bigl(\bnu^{(n)} - 1\bigr) e^{-(\bnu^{(n)} - 1)y_i} \,dy_i \Biggr) \,dy
\\
&&\qquad= z_n \prod_{i=1}^{z_n}
\biggl(1 + \frac{b_n}{(\bnu^{(n)} - 1)i}\biggr) \bigl(\bnu^{(n)} - 1\bigr)
\\
&&\qquad\quad{}\times\int_0^t e^{-(\bnu^{(n)} - 1 + b_n) y} \bigl(1 -
e^{-(\bnu^{(n)}
- 1)y} \bigr)^{z_n-1} \,dy
\\
&&\qquad\leq c z_n^{{b_n}/{(\bnu^{(n)} - 1)}+1} \bigl(\bnu^{(n)} - 1\bigr) \int
_0^t e^{-(\bnu^{(n)} - 1 + b_n) y} \bigl(1 -
e^{-(\bnu^{(n)} - 1)y} \bigr)^{z_n-1} \,dy, 
\end{eqnarray*}
where $c>0$ is an absolute constant. Recall that $t_n=\frac{1-\varepsilon
}{2(\nu-1)} \log n$, and $z_n=\sqrt{n/\log n}$. Now we use the fact
that $ (1 - e^{-(\bnu^{(n)} - 1)y} )^{z_n-1} \leq e^{-n^{\alpha
}}$, for some $\alpha> 0$ and for all $0 \leq y \leq t_n$. We infer
\[
\PP\bigl(T^*(z_n) \leq t_n\bigr) \leq c \bigl(
\bnu^{(n)} - 1\bigr) z_n^{{b_n}/{(\bnu^{(n)} - 1)}+1} \int
_0^{t_n} e^{-n^\alpha} \,dy = o
\bigl(n^{-4}\bigr),
\]
since $b_n=O(\log n)$.
}
Hence, we have w.h.p.
\[
\bigl|B_w(C_a,t_n)\bigr| \leq z_n.
\]
[Here naturally, for $W\subseteq V$, we let $B_w(W,t) = \{b, \mbox
{such that} \dist_w(W,b)\leq t \}$.]

Similarly for $b$, and exposing $B_w(C_b,t_n)$, again w.h.p. we obtain a
set of size at most $z_n$. Now note that, because each matching is
uniform among the remaining half-edges, the probability of hitting
$B_w(C_a,t_n)$ is at most $\hS_{C_a}(z_n)/n$.

Let $\varepsilon_n:= \log\log n$. By Markov's inequality we have
\begin{eqnarray*}
\PP \bigl(\hS_{C_a}(z_n) \geq z_n
\varepsilon_n \bigr) \leq \EE\hS _{C_a}(z_n)/z_n
\varepsilon_n
= \frac{K + (\bnu^{(n)} - 1) (z_n+\lambda_n)}{z_n \varepsilon_n} = o(1).
\end{eqnarray*}

We conclude
\begin{eqnarray*}
&&\PP \bigl( B_w(C_a,t_n) \cap
B_w(C_b,t_n) \neq\varnothing \bigr)
\\
&&\qquad\leq \PP \bigl( \bigl|B_w(C_a,t_n)\bigr| >
z_n \bigr) + \PP \bigl( \bigl|B_w(C_b,t_n)\bigr|
> z_n \bigr)
\\
&&\qquad\quad{}+ \PP \bigl(\hS_{C_a}(z_n) \geq z_n
\varepsilon_n \bigr) + \varepsilon _n z_n^2/n\\
&&\qquad= o(1).
\end{eqnarray*}
This completes the proof of Proposition~\ref{prop-low-dist}.

\begin{appendix}\label{app}
\section{Structure of the $2$-core}\label{sec:fpp-2core}

%
%

The $k$-core of a given graph $G$ is the largest induced subgraph of
$G$ with minimum vertex degree at least $k$. The $k$-core of an
arbitrary finite
graph can be found by removing vertices of degree less than $k$, in an
arbitrary order, until no such vertices exist.

Consider now a random graph $G_n \sim G^*(n, (d_i)_1^n)$ where the
degree sequence $(d_i)_1^n$ satisfies Condition \ref{cond-dil}. In the
process of constructing a random graph $G_n$ by matching the
half-edges, the $k$-core can be found by successively removing the
half-edge of a node of degree less than $k$ followed by removing a
uniformly random half-edge from the set of all the remaining half-edges
until no such vertices (of degree less than $k$) remain. What remains
at this time is the $k$-core. Since these half-edges are unexposed, the
$k$-core edge set is uniformly random conditional on the $k$-core
half-edge set.
Let $k \geq2$ be a fixed integer, and $\Core_k^{(n)}$ be the $k$-core
of the graph $G_n \sim G^*(n, (d_i)_1^n)$.
For integers $l \geq0$ and $0 \leq r \leq l$, let $\pi_{lr}$ denote
the binomial probabilities
\[
\pi_{lr}(p) = \PP \bigl( \Bin(l,p) = r \bigr) =\pmatrix {l \cr r}
p^r (1-p)^{l-r}.
\]


We further define the functions
\[
h(p):= \sum_{r=k}^{\infty} \sum
_{l=r}^{\infty} r p_l \pi_{lr}(p)\quad
\mbox{and} \quad h_1(p):= \sum_{r=k}^{\infty}
\sum_{l=r}^{\infty} p_l
\pi_{lr}(p).
\]

\begin{theorem}[(Janson and Luczak~\cite{janluc07})]\label{thm:cm-core}
Consider a random graph $G(n, (d_i)_1^n)$ where the degree sequence
$(d_i)_1^n$ satisfies Condition \ref{cond-dil}. Let $k \geq2$ be
fixed, and let \textit{$\Core^{(n)}_k$} be the $k$-core of $G(n,
(d_i)_1^n)$. Let $\hp$ be the largest $p \leq1$ such that $\mu p^2 = h(p)$.
Assume $\hp> 0$, and further suppose that $\hp$ is not a local maximum
point of the function $h(p) - \mu p^2$. Then
\[
v \bigl(\Core_k^{(n)} \bigr)/n \stackrel{p} {
\rightarrow} h_1(\hp) > 0,\qquad  v_j \bigl(
\Core_k^{(n)} \bigr)/n \stackrel{p} {\rightarrow} \sum
_{l=j}^{\infty} p_l
\pi_{lj}(\hp)
\]
for $j\geq k$, and $e (\Core_k^{(n)} )/n \stackrel
{p}{\rightarrow} \mu\hp^2/2$.
\end{theorem}

From now on, we consider the case $k=2$, and denote by $\tG$ the
$2$-core of a graph $G$. In particular applying Theorem~\ref
{thm:cm-core} to the case $k=2$, we have
$ h(\hp):=\mu\hp- \sum_l l p_l \hp(1-\hp)^{l-1} = \mu\hp(1-G_q(1-\hp))$.
Recall from Theorem~\ref{thm:cm-core} that we have to solve the
equation $\mu\hp^2 = h(\hp)$. Thus we obtain $1-\hp= G_q(1-\hp)$, and
so $\hp= 1-\lambda$.

By \cite{lel-diff}, Theorem~10, the graph $\tG_n$ obtained from $G_n$
has the same distribution as a random graph constructed by the
configuration model on $\tilde{n}$ nodes with a degree sequence $\td
^{(n)}_1, \ldots, \td^{(n)}_{\tilde{n}}$ satisfying the following
properties:
\begin{eqnarray*}
\tilde{n}/n \stackrel{p} {\to} h_1(1-\lambda) &=&1-G_p(
\lambda )-(1-\lambda)G'_p(\lambda)
\\
&=& 1-G_p(\lambda)-\mu\lambda(1-\lambda)>0
\end{eqnarray*}
and
\begin{eqnarray*}
\bigl|\bigl\{i, \td^{(n)}_i=j\bigr\}\bigr|/n &\stackrel{p} {\to}&
\sum_{\ell=j}^\infty p_\ell\pmatrix {\ell
\cr j}(1-\lambda)^j\lambda^{\ell-j},\qquad j\geq2,
\\
\sum_i \td^{(n)}_i/n &
\stackrel{p} {\to}& \mu(1-\lambda)^2.
\end{eqnarray*}
It follows that the sequence $\{\td^{(n)}_1, \ldots, \td^{(n)}_{\tilde
{n}}\}$ satisfies
also Condition \ref{cond-dil} for some probability distribution
$\tilde{p}_k$ with mean $\tilde{\mu}$ (which can be easily calculated
from the two above properties).

Let $\tilde{q}$ be the size-biased probability mass function
corresponding to $\tilde{p}$. We now show that $\tilde q$ and $q$ have
the same mean. Indeed, denoting by $\tilde\nu$ the mean of $\tq$, we
see that $\tilde\nu$ is given by
%
\begin{eqnarray}
\tilde{\nu} &:=& \sum_k k
\tq_k = \frac{1}{\tilde{\mu}} \sum_{k}
k(k-1) \tilde{p}_k
\nonumber\\
&=& \frac{\sum_{k \geq2} k (k-1) \sum_{\ell\geq k} p_\ell
{\ell\choose k}(1-\lambda)^k\lambda^{\ell-k}}{\mu(1-\lambda)^2}
\nonumber
\\[-8pt]
\\[-8pt]
\nonumber
&=& \frac{\sum_{\ell} p_\ell\sum_{k \leq\ell} k (k-1) {\ell
\choose k}(1-\lambda)^k\lambda^{\ell-k}}{\mu(1-\lambda)^2}
\\
&=& \frac{\sum_{\ell} p_\ell\ell(\ell-1)}{\mu} = \nu.\nonumber
\end{eqnarray}
To find the diameter in the case $\dmin=1$, we also need to show that
$\tilde{q}_1 =\lambda_*$:
%
\begin{eqnarray}
\label{eq-aug-q1}
\tilde{q}_1 &=& \frac{2\tilde{p}_2}{\tilde{\mu}}=
\frac{2\sum_{\ell\geq2} p_\ell{\ell\choose2} (1-\lambda)^2\lambda^{\ell-2}}{\mu
(1-\lambda)^2}
\nonumber
\\[-8pt]
\\[-8pt]
\nonumber
&=& \frac{1}{\mu} G''_p(\lambda)
= G'_q(\lambda)=\lambda_*.
\end{eqnarray}

We will also need the following relaxation of the notion of $2$-core.
Let $G=(V,E)$ be a graph. For a given subset $W \subseteq V$, define
the $W$-augmented $2$-core to be the maximal induced subgraph of $G$
such that every vertex in $V\setminus W$ has degree at least two; that
is, the vertices in $W$ are not required to verify the minimum degree
condition in the definition of the $2$-core. The $W$-augmented $2$-core
of a graph $G$ will be denoted by $\tG(W)$.

It is easy to see that the $W$-augmented $2$-core of a random graph
$G_n \sim G^*(n, (d_i)_1^n)$, denoted by $\tG_n(W)$, can be found in
the same way as the $2$-core, except that now the termination condition
is that every node outside of $W$ must have degree at least $2$, since
the half-edges adjacent to a vertex in $W$ are exempt from this
restriction. The conditional uniformity
property thus evidently holds in this case as well; that is, for any
subset $W \subset V$, the $W$-augmented $2$-core is uniformly random,
conditional on the $W$-augmented $2$-core half-edge set. We will need
the following basic result, the proof of which is easy and can be found,
for example, in~\cite{fern07}, Lemma A.7.

\begin{lemma}\label{lem-core-augmented}
Consider a random graph $G_n \sim G(n, (d_i)_1^n)$ where the degree
sequence $(d_i)_1^n$ satisfies Condition \ref{cond-dil}. { For any
subset $W\subset V(G_n)$, and any \mbox{$w\in W$}, there exists $C>0$
(sufficiently large) so that we have
\[
\PP \bigl(e\bigl(\tG_n(W)\bigr)-e\bigl(\tG_n\bigl(W
\setminus\{w\}\bigr)\bigr) \leq C \log n \bigr) = 1-o\bigl(n^{-1}\bigr).
\]
}
\end{lemma}

Note that the above lemma implies (by removing one vertex
from $W$ at a time) that if $|W| = o(n/\log n)$, then w.h.p. the two
graphs $\tG_n$ and $\tG_n(W)$ have the same degree distribution asymptotic.

\section{The random graphs $G(n,p)$ and $G(n,m)$}\label{sec:Sko}

We derive the results for $G(n,p)$ and $G(n,m)$ from our results for
$G(n,\break (d_i)_1^n)$ by conditioning on the degree sequence. Indeed, we can
be more general and consider a random graph $G_n$ with $n$ vertices
labeled $[1,n]$ and some random distribution of the edges such that any
two graphs on $[1,n]$ with the same degree sequence have the same
probability of being attained by $G_n$. Equivalently, conditioned on
the degree sequence, $G_n$ is a random graph with that degree sequence
of the type $G(n,(d_i)_1^n)$ introduced in the \hyperref[sec:fpp-intro]{Introduction}. We may
thus construct $G_n$ by first picking a random sequence $(d_i)_1^n$
with the right distribution, and then choosing a random graph
$G(n,(d_i)_1^n)$ for this $(d_i)_1^n$.

We assume that Condition \ref{cond-dil} holds in probability:

\begin{condition}\label{cond-dil-p}
For each $n$, let $\mathbf{d}^{(n)} = (d^{(n)}_i)_1^n$ be the random
sequence of vertex degrees of $G_n$ and $u_k^{(n)}$ be the random
number of vertices with degree $k$.
Then, for some probability distribution $(p_r)_{r=0}^{\infty}$ over
integers independent of $n$ and with finite mean $\mu:=\sum_{k\geq
0}kp_k\in(0,\infty)$, the following holds:
\begin{longlist}[(ii)]
\item[(i)] $u_k^{(n)}/n\stackrel{p}{\rightarrow} p_k$ for every $k\geq
1$ as $n \to\infty$;
\item[(ii)] For some $\varepsilon>0$, $\sum_{k=1}^\infty k^{2+\varepsilon
}u_k^{(n)}=O_p(n)$;
\end{longlist}
\end{condition}

We first show that for $G(n,p)$ and $G(n,m)$, with $np\to\mu\in
(0,\infty)$ and $2m/n\to\mu$, Condition \ref{cond-dil-p} holds with
$(p_k)$ a Poisson distribution with parameter $\mu$, that is, $p_k =
e^{-\mu}\frac{\mu^k}{k!}$.
Indeed the fact that Condition~\ref{cond-dil-p}(i) holds with such
$(p_k)$ follows by elementary estimates of mean and variance done in
Example~6.35 of \cite{jlr} or Theorem~3.1 in \cite{bollobas}. Showing that
Condition~\ref{cond-dil-p}(ii) holds can be done by similar
arguments. Consider $G(n,p)$ [a similar argument holds for $G(n,m)$],
we have for all $k\geq0$ and for $n$ sufficiently large,
\[
n^{-1} \EE u_k^{(n)} = \pmatrix{n-1\cr k}
p^k(1-p)^{n-1-k}< (\mu+1)^k/k!.
\]
Thus $n^{-1} \sum_{k=1}^\infty k^{2+\varepsilon} \EE u_k^{(n)} = O(1)$,
and Condition~\ref{cond-dil-p}(ii) holds.

The following lemma is similar to Lemma~8.2 in \cite{janson-2008}.

\begin{lemma}
If Condition \ref{cond-dil-p} holds, we may, by replacing the random
graph $G_n$ by other random graphs $G'_n$ with the same distribution,
assume that the random graphs are defined on a common probability space
and that Condition \ref{cond-dil} holds a.s.
\end{lemma}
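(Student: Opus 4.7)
The plan is to invoke the Skorohod coupling theorem (Kallenberg Theorem~3.30, cited in Remark~\ref{rem-ER-CM}) to realize the sequence $\{G_n\}$ on a common probability space with the convergence-in-probability statements of Condition~\ref{cond-dil-p} upgraded to almost sure statements. First I would encode the relevant information about $G_n$ into a random element of a Polish space: let $\pi_n$ denote the random empirical degree distribution defined by $\pi_n(\{k\}) = u_k^{(n)}/n$, viewed as a random probability measure on $\NN$ (a Polish space under the topology of pointwise convergence of masses), and let $M_n := n^{-1}\sum_{i=1}^n (d_i^{(n)})^{2+\epsilon}$ with $\epsilon$ the constant from Condition~\ref{cond-dil-p}(ii). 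Condition~\ref{cond-dil-p}(i) then reads $\pi_n \stackrel{p}{\to} p$ in this topology, and Condition~\ref{cond-dil-p}(ii) says the laws of $M_n$ are tight in $[0,\infty)$.

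Next I would apply Skorohod coupling: since convergence in probability to a deterministic limit in a Polish space lifts (on a common probability space) to almost sure convergence, there exist $\tilde \pi_n$ with $\tilde \pi_n \stackrel{d}{=} \pi_n$ and $\tilde \pi_n \to p$ a.s.\ pointwise. From $\tilde \pi_n$ one reconstructs a degree sequence $(\tilde d_i^{(n)})_{i=1}^n$ with the original joint distribution (using any canonical measurable labeling, e.g.\ listing vertices of each degree in increasing order of index), and then, conditionally on $(\tilde d_i^{(n)})_{i=1}^n$, samples the graph $G'_n$ uniformly among graphs with that degree sequence; the hypothesis that $G_n$ is uniform given its degree sequence ensures $G'_n \stackrel{d}{=} G_n$. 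By construction, $\tilde u_k^{(n)}/n = \tilde \pi_n(\{k\}) \to p_k$ for every $k$ almost surely, giving Condition~\ref{cond-dil}(i) on the new probability space.

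The main obstacle lies in Condition~\ref{cond-dil}(ii), which demands an almost sure $O(n)$ bound on $\sum_i (\tilde d_i^{(n)})^{2+\epsilon}$, whereas Skorohod alone only transfers the tightness of $\tilde M_n$, not almost sure boundedness. I would resolve this by appealing to the subsequence principle: since Theorem~\ref{thm-fpp-main} concludes convergence in probability of $\diam_w(G_n)/\log n$ and $\flood_w(G_n)/\log n$, it suffices to verify that every subsequence admits a further subsequence along which these ratios converge almost surely. Given such a subsequence, tightness of the pair $(\pi_n, M_n)$ together with Prokhorov's theorem yields a sub-subsequence along which the joint law converges weakly, and Skorohod then produces a coupling with $\tilde \pi_n \to p$ a.s.\ pointwise and $\tilde M_n$ a.s.\ convergent to an a.s.\ finite limit, hence almost surely bounded. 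Thus Condition~\ref{cond-dil}(ii) holds a.s.\ along the sub-subsequence, and applying the deterministic-degree-sequence Theorem~\ref{thm-fpp-main} pointwise on the probability-one event transfers the conclusion back to the original $G_n$.
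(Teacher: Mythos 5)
Your proposal does not prove the lemma as stated. The lemma asserts the existence of a \emph{single} coupling of the whole sequence $(G_n)$ on one probability space under which Condition~\ref{cond-dil} holds a.s.\ --- in particular an almost sure bound $\sum_i (d_i^{(n)})^{2+\epsilon}\leq C(\omega)\,n$ valid simultaneously for all $n$. You correctly identify the real obstacle (Skorohod applied to the empirical degree distribution only upgrades Condition~\ref{cond-dil-p}(i), and tightness of $M_n=n^{-1}\sum_i (d_i^{(n)})^{2+\epsilon}$ does not by itself become an a.s.\ uniform-in-$n$ bound), but instead of resolving it you switch to a Prokhorov/subsequence argument, which only produces couplings along sub-subsequences. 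That route does (essentially correctly, modulo making explicit the conditional-to-unconditional step and the fact that the deterministic theorem only gives convergence in probability, which still suffices for the subsequence principle) transfer the conclusion of Theorem~\ref{thm-fpp-main} to $G(n,p)$ and $G(n,m)$, but it establishes a strictly weaker statement than the lemma; the lemma's conclusion is never obtained, and cannot be obtained with your tools alone, since $M_n$ need not converge in distribution along the full sequence, so no single Skorohod representation covering all $n$ is available from your setup.

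The paper's proof fills exactly this gap by an explicit layered coupling: pick increasing constants $C_j$ so that the events $A_j=\bigl\{\sum_k k^{2+\epsilon}u^{(n)}_k\leq C_j\,n\bigr\}$ satisfy $\PP\bigl((u^{(n)}_k)\in A_j\bigr)\geq 1-(2j)^{-1}$ \emph{uniformly in} $n$; draw a single uniform variable $\theta$, set $\ell=\lceil 1/(2(1-\theta))\rceil$, and realize, for every $n$ simultaneously, the law of $(u_k^{(n)})$ as a $\theta$-driven mixture over the layers $A_{j}\setminus A_{j'}$, so that a.s.\ the realized sequence lies in $A_\ell$ for all $n$ at once (Condition~\ref{cond-dil}(ii) with the random constant $C_\ell$ depending only on $\theta$), while Skorohod is applied within layers to secure Condition~\ref{cond-dil}(i). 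A secondary, fixable point in your write-up: reconstructing the labeled degree sequence from $\tilde\pi_n$ by ``listing vertices of each degree in increasing order of index'' does not reproduce the law of $(d^{(n)}_i)_1^n$ (for $G(n,p)$ that sequence is exchangeable, not sorted); one should resample the labels from the conditional law given the counts $n\tilde\pi_n$ --- harmless for diameter and flooding, which are label-invariant, but needed for the claim $G'_n\stackrel{d}{=}G_n$.
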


\begin{pf}
If only Condition \ref{cond-dil}(i) was required, this lemma would be a
direct consequence of the Skorohod coupling theorem (Theorem~3.30, \cite
{kallenberg}) for the random sequence $(u^{(n)}_k)_{k=1}^\infty$ in the
space $\RR_+^\infty$. 
We now explain how to incorporate Conditions \ref{cond-dil}(ii).
Condition \ref{cond-dil-p} implies that it is possible to find an
increasing sequence $C_j$ for $j\geq1$ diverging to infinity so that
considering the sets
\[
A_j = \Biggl\{ (x_k)_{k=1}^\infty\in
\RR_+^\infty, \sum_{k=1}^\infty
x_k< \infty, \sum_{k=1}^\infty
k^{2+\varepsilon}x_k\leq C_j\sum
_{k=1}^\infty x_k 
 \Biggr\},
\]
we have for all $n$, $\PP ((u^{(n)}_k)\in A_j  )\geq
1-(2j)^{-1}$ (note that $\sum_{k=1}^\infty u^{(n)}_k=n$). 
Let $q^{(n)}_j=\PP ((u^{(n)}_k)\in A_j  )$ so that
$q^{(n)}_{j+1}\geq q^{(n)}_j\geq1-(2j)^{-1}$ for all $j\geq1$.
For each~$\ell$, we define an associated finite sequence
$j^{(n)}_i(\ell)$ for $i=1,\ldots, k^{(n)}(\ell)$ such that
$j^{(n)}_1(\ell)=1$ and for $i\geq1$,
$j^{(n)}_{i+1}(\ell)=\min\{ j\geq j^{(n)}_i(\ell), q^{(n)}_{j}-q^{(n)}_{j^{(n)}_i(\ell)}\geq\frac{1}{2\ell}\}$ if
$q^{(n)}_{j^{(n)}_i(\ell)}< 1-(2\ell)^{-1}$, and if
$q^{(n)}_{j^{(n)}_i(\ell)}\geq
1-(2\ell)^{-1}$, we set $k^{(n)}(\ell)=i$. Let
$\cJ^{(n)}(\ell)=\{j^{(n)}_1(\ell)=1,j^{(n)}_2(\ell),\ldots,j^{(n)}_{k^{(n)}(\ell)}(\ell)\}$. Note
that, since $q^{(n)}_\ell\geq1-(2\ell)^{-1}$, we have $k^{(n)}(\ell
)\leq\ell$.

We now explicitly construct a ``Skorohod coupling.'' Let $\theta$ be a
uniform random variable in $[0,1]$, and define the random variable
$J^{(n)}(\ell)$ by $J^{(n)}(\ell)=\min\{j\in\cJ^{(n)}(\ell), \theta
\leq q^{(n)}_{j}\}$ if $\theta\leq q^{(n)}_{k^{(n)}(\ell)}$, and if
$\theta>q^{(n)}_{k^{(n)}(\ell)}$, we set $J^{(n)}(\ell)=\infty$.
We set $j^{(n)}_0(\ell)=0$, $j^{(n)}_i(\ell)=\infty$ for $i>k^{(n)}(\ell
)$, $A_0=\varnothing$ and $A_\infty=\RR_+$.
With these definitions, we have for all $n$ and $i\geq1$,
$\PP(J^{(n)}(\ell)=j^{(n)}_i(\ell))=\PP ((u^{(n)}_k)\in
A_{j^{(n)}_i(\ell)}\setminus A_{j^{(n)}_{i-1}(\ell)}  )$.

For a given $\ell$ and for any $i\geq1$, we define the random variables
$\tilde{u}^{(n)}(i)=(\tilde{u}^{(n)}_k(i))_{k\in\NN}\in\RR_+^\infty$
having the law of $(u^{(n)}_k)$ conditioned on the event
$\{(u^{(n)}_k)\in A_{j^{(n)}_i(\ell)}\setminus
A_{j^{(n)}_{i-1}(\ell)} \}$. 
Note in particular that by construction, if $i\leq k^{(n)}(\ell)$, we
have $\PP ( (u^{(n)}_k)\in A_{j^{(n)}_i(\ell)}\setminus
A_{j^{(n)}_{i-1}(\ell)} )\geq(2\ell)^{-1}$. Hence if there exist
an infinite sequence of $n$ such that $i\leq k^{(n)}(\ell)$, then we
can apply the Skorohod coupling theorem and assume that, along this
subsequence, Condition \ref{cond-dil}(i) holds.

We can now combine this coupling with the following one: given
$\theta$ taken uniformly at random in $[0,1]$, take
$\ell=\lceil\frac{1}{2(1-\theta)}\rceil$, and consider
$\tilde{u}^{(n)}(J^{(n)}(\ell))$ which has the same law as the
original $u^{(n)}$. By construction, Condition \ref{cond-dil}(i)
holds. Moreover, we have by construction $J^{(n)}(\ell)\leq\ell$
since $q^{(n)}_{\ell}\geq1-(2\ell)^{-1}>\theta$, so that $\tilde
{u}^{(n)}(J^{(n)}(\ell))\in A_{\ell}$ and Condition \ref{cond-dil}(ii) holds.
\end{pf}
\end{appendix}

\section*{Acknowledgments}
This paper is a part of the first author's Ph.D. thesis \cite
{aminithesis11}. He is grateful to Remco van der Hofstad and Laurent
Massouli\'e who reviewed the thesis, and made many valuable comments
which helped improve the presentation of this paper. We also thank Omid
Amini for helpful comments and discussions.

%
%



\printaddresses
\end{document}